\DeclareMathOperator{\Image}{Im}
\DeclareRobustCommand\i{i}
\DeclareRobustCommand\ip{i+1}
\DeclareRobustCommand\j{j}
\DeclareRobustCommand\jp{j+1}
\newcommand{\eqrefi}[2]{\renewcommand\i{#2}\eqref{#1}\renewcommand\i{i}}
\newcommand{\eqrefip}[2]{\renewcommand\ip{#2}\eqref{#1}\renewcommand\ip{i+1}}
\newcommand{\eqrefj}[2]{\renewcommand\j{#2}\eqref{#1}\renewcommand\j{j}}
\newcommand{\eqrefjp}[2]{\renewcommand\jp{#2}\eqref{#1}\renewcommand\jp{j+1}}
\newcommand{\eqrefij}[3]{\renewcommand\i{#2}\renewcommand\j{#3}\eqref{#1}\renewcommand\i{i}\renewcommand\j{j}}
\newcommand{\eqrefijp}[3]{\renewcommand\i{#2}\renewcommand\jp{#3}\eqref{#1}\renewcommand\i{i}\renewcommand\jp{j+1}}
\title{A syntactic characterization of weakly Mal'tsev varieties}
\author[a]{Nadja Egner}
\author[a,b]{Pierre-Alain Jacqmin}
\author[c]{Nelson Martins-Ferreira}
\affil[a]{\small{\textit{Institut de Recherche en Math\'ematique et Physique, Universit\'e catholique de Louvain, Louvain-la-Neuve, Belgium\vspace{5pt}}}}
\affil[b]{\small{\textit{Department of Mathematics, Royal Military Academy, Brussels, Belgium\vspace{5pt}}}}
\affil[c]{\small{\textit{Instituto Polit\'ecnico de Leiria, Leiria, Portugal\vspace{5pt}}}}
\affil[ ]{Email: nadja.egner@uclouvain.be, pierre-alain.jacqmin@uclouvain.be, martins.ferreira@ipleiria.pt}
\keywords{weakly Mal'tsev category, weakly Mal'tsev variety, Mal'tsev condition, syntactic characterization, strong relation, pullback injection}
\begin{document}

\maketitle

\begin{abstract}
The notion of a weakly Mal'tsev category, as it was introduced in 2008 by the third author, is a generalization of the classical notion of a Mal'tsev category. It is well-known that a variety of universal algebras is a Mal'tsev category if and only if its theory admits a Mal'tsev term. In the main theorem of this paper, we prove a syntactic characterization of the varieties that are weakly Mal'tsev categories. We apply our result to the variety of distributive lattices which was known to be a weakly Mal'tsev category before. By a result of Z.~Janelidze and the third author, a finitely complete category is weakly Mal'tsev if and only if any internal strong reflexive relation is an equivalence relation. In the last part of this paper, we give a syntactic characterization of those varieties in which any regular reflexive relation is an equivalence relation.
\end{abstract}

\section{Introduction}

The study of Mal'tsev categories originates with Mal'tsev's paper~\cite{mal'tsev:1954} from 1954 where he showed that, for a variety $\mathbb{V}$ of (finitary one-sorted) universal algebras, the composition of congruences on a fixed algebra is commutative if and only if the algebraic theory of $\mathbb{V}$ contains a ternary term $p(x,y,z)$ such that the two identities $p(x,x,y)=y$ and $p(x,y,y)=x$ are satisfied. Such varieties are nowadays called Mal'tsev varieties (or $2$-permutable varieties) and a term $p(x,y,z)$ as described above a Mal'tsev term. Examples of Mal'tsev varieties are given by the varieties of groups, of rings, of Lie algebras and of Heyting algebras. In~\cite{carboni.lambek.pedicchio:1990} from 1990, Carboni, Lambek and Pedicchio introduced the notion of a Mal'tsev category in the context of (Barr-)exact categories via the former condition, and developed some aspects of non-abelian homological algebra. In this setting, the commutativity of the composition of internal equivalence relations on a fixed object is equivalent to any reflexive relation being an equivalence relation or any relation being difunctional. In~\cite{carboni.pedicchio.pirovano:1992} from 1992, Carboni, Pedicchio and Pirovano defined Mal'tsev categories in the finitely complete setting via the latter two, still equivalent, conditions. In addition to the varietal examples given above, one can cite as examples of Mal'tsev categories the category of topological groups, any abelian category and the dual of any elementary topos. We refer the reader to~\cite{bourn.gran.jacqmin:2021} for further details on the history of the development of Mal'tsev categories. 

Mal'tsev categories turned out to be a central concept in categorical algebra, especially in the development of the notion of centrality of equivalence relations~\cite{pedicchio:1996, bourn.gran:2002}. Further results in Mal'tsev categories were proved in the study of central extensions~\cite{janelidze.kelly:1994, everaert:2014, duvieusart.gran:2018} and homological lemmas such as the denormalized $3\times 3$-lemma~\cite{bourn:2003} which is, in the regular context, equivalent to the weaker Goursat property (also known as $3$-permutability). More recently, some embedding theorems have been established for Mal'tsev categories~\cite{jacqmin:2018, jacqmin:2019}, similar to the Freyd-Mitchell embedding theorem for abelian categories.

Weakly Mal'tsev categories were introduced by the third author in \cite{martins-ferreira:2008} as a generalization of the notion of a Mal'tsev category. It was shown in \cite{bourn:1996} that a Mal'tsev category is exactly a finitely complete category $\mathfrak{C}$ such that for any pullback diagram
\begin{equation}\label{eq:pullbackofsplitepimorphisms}
	\begin{tikzcd}
		{X\times_Z Y} && Y \\
		\\
		X && Z
		\arrow["f"', shift right=1, from=3-1, to=3-3]
		\arrow["r"', shift right=1, from=3-3, to=3-1]
		\arrow["g", shift left=1, from=1-3, to=3-3]
		\arrow["s", shift left=1, from=3-3, to=1-3]
		\arrow["{p_1}"', shift right=1, from=1-1, to=3-1]
		\arrow["{e_1}"', shift right=1, from=3-1, to=1-1]
		\arrow["{p_2}", shift left=1, from=1-1, to=1-3]
		\arrow["{e_2}", shift left=1, from=1-3, to=1-1]
		\arrow["\scalebox{2}{$ \lrcorner $}"{anchor=center, pos=0.125}, draw=none, from=1-1, to=3-3]
	\end{tikzcd}
\end{equation}
in~$\mathfrak{C}$, where $f$ and $g$ are two split epimorphisms with respective splittings $r$ and~$s$, and $p_1$ and $p_2$ are the pullback projections, the canonical pullback injections $e_1$ and $e_2$ induced by $r$ and~$s$, respectively, are jointly strongly epimorphic. In the finitely complete context, this is equivalent to say that $e_1$ and $e_2$ are jointly extremally epimorphic. In~\cite{martins-ferreira:2008}, a category $\mathfrak{C}$ is called weakly Mal'tsev if it admits pullbacks of split epimorphisms along split epimorphisms, and the pullback injections $e_1$ and $e_2$ in a diagram as \eqref{eq:pullbackofsplitepimorphisms} are jointly epimorphic. A direct consequence of this definition is that a reflexive graph
\begin{equation*}
	\begin{tikzcd}
		C_1 \arrow[rr, "c"', shift right=3] \arrow[rr, "d", shift left=3] && C_0 \arrow[ll, "e" description]
	\end{tikzcd}
\end{equation*}
in a weakly Mal'tsev category admits at most one multiplicative graph structure. This means that there exists at most one "composition" map $m\colon C_2\to C_1$, where $C_2$ is the pullback
\begin{equation*}
	\begin{tikzcd}
		{C_2} && C_1 \\
		\\
		C_1 && C_0
		\arrow["d"', shift right=1, from=3-1, to=3-3]
		\arrow["e"', shift right=1, from=3-3, to=3-1]
		\arrow["c", shift left=1, from=1-3, to=3-3]
		\arrow["e", shift left=1, from=3-3, to=1-3]
		\arrow["{\overline{p_1}}"', shift right=1, from=1-1, to=3-1]
		\arrow["{\overline{e_1}}"', shift right=1, from=3-1, to=1-1]
		\arrow["{\overline{p_2}}", shift left=1, from=1-1, to=1-3]
		\arrow["{\overline{e_2}}", shift left=1, from=1-3, to=1-1]
		\arrow["\scalebox{2}{$ \lrcorner $}"{anchor=center, pos=0.125}, draw=none, from=1-1, to=3-3]
	\end{tikzcd}
\end{equation*}
of $d$ along~$c$, such that 
\begin{equation*}
	m\overline{e_1}=1_{C_1}=m\overline{e_2}
\end{equation*}
holds, where $\overline{e_1}$, $\overline{e_2}$ are the pullback injections induced by the common splitting $e$ of $d$ and~$c$. Furthermore, one can show that every multiplicative graph in a weakly Mal'tsev category yields automatically an internal category, i.e., $m$ satisfies the usual identity and associativity axioms internally. However, an internal category in a weakly Mal'tsev category can fail to yield an internal groupoid as it is the case for Mal'tsev categories \cite{martins-ferreira:2008,martins-ferreira.vanderlinden:2014}. This means that there are internal categories 
\begin{equation*}
	\begin{tikzcd}
		C_2 \arrow[rr, "m"] && C_1 \arrow[rr, shift left=3, "d"] \arrow[rr, shift right=3, "c"'] && C_0 \arrow[ll, "e" description]
	\end{tikzcd}
\end{equation*}
in certain weakly Mal'tsev categories that do not allow for an "inverse" map $i\colon C_1\to C_1$.

A (finitary one-sorted) variety $\mathbb{V}$ of universal algebras is a Mal'tsev category if and only if its theory contains a ternary term $p(x,y,z)$ such that the equations $p(x,x,y)=y$ and $p(x,y,y)=x$ are satisfied in~$\mathbb{V}$~\cite{mal'tsev:1954,mal'tsev:1963}. Surprisingly, no similar syntactic characterization of weakly Mal'tsev varieties was proved. The main purpose of this paper is to establish such a syntactic characterization (Theorem~\ref{thm:syntaxWM}). In contrast to Mal'tsev varieties where we have one ternary term $p(x,y,z)$ which fulfills certain identities, we get that a variety $\mathbb{V}$ is a weakly Mal'tsev category if and only if there exist integers $k,m,N\geqslant 0$, binary terms $f_1,g_1,\ldots,f_k,g_k$, ternary terms $p_1,\ldots,p_m$, $(2(k+2m+1))$-ary terms $s_1,\ldots,s_N$, $(2(k+m+2))$-ary terms $\sigma_1,\ldots,\sigma_{N+1}$ and, for all $i\in\{1,\ldots,N+1\}$, $(k+m+1)$-ary terms $\eta_1^{(i)}, \eta_2^{(i)}, \epsilon_1^{(i)}, \epsilon_2^{(i)}$ that satisfy certain equations. This phenomenon of having the number of terms or their arities not being fixed in a syntactic characterization also occurs for congruence distributive~\cite{jonsson:1967}, congruence modular~\cite{day:1969,gumm:1981} and protomodular varieties~\cite{bourn.janelidze:2003}.

Two (quasi-)algebraic examples of weakly Mal'tsev categories that are not Mal'tsev are given by the category of commutative monoids with cancellation~\cite{martins-ferreira:2008} and the category of distributive lattices~\cite{martins-ferreira:2012}, see also \cite{martins-ferreira:2015} for examples of co-weakly Mal'tsev categories, i.e., categories whose dual category is weakly Mal'tsev. We will apply our main result to the variety of distributive lattices.

Let us briefly describe the strategy we used to find the syntactic characterization of weakly Mal'tsev varieties. The first step was to look for the right formulation of the property of being a weakly Mal'tsev category and apply it to the right diagram made from free algebras in the variety. In order to do so, we expressed the property, in the finitely complete and cocomplete context, as the property that for each pair of split epimorphisms $f$ and $g$ with common codomain and respective sections $r$ and~$s$, considering the pullback diagram~\eqref{eq:pullbackofsplitepimorphisms} and the cokernel pair
\begin{equation*}
	\begin{tikzcd}
		X+Y && X\times_Z Y && Q
		\arrow["{[e_1,e_2]}", from=1-1, to=1-3]
		\arrow["{q_1}", shift left=1, from=1-3, to=1-5]
		\arrow["{q_2}"', shift right=1, from=1-3, to=1-5]
	\end{tikzcd}
\end{equation*}
of the induced morphism $[e_1,e_2]$ from the coproduct $X+Y$, one has $q_1=q_2$. This property is of the type studied in~\cite{jacqmin:2022} (generalizing in some context the type of properties studied in~\cite{jacqmin.janelidze:2021} of which the Mal'tsev property is an example). Using the results from~\cite{jacqmin:2022}, we immediately get that, for a variety~$\mathbb{V}$, it is equivalent to only consider the particular case of the pullback
\begin{equation*}
	\begin{tikzcd}
		P && {\mathsf{F}(x,y)} \\
		\\
		{\mathsf{F}(x,y)} && {\mathsf{F}(x),}
		\arrow["r"', shift right=1, from=3-3, to=3-1]
		\arrow["f"', shift right=1, from=3-1, to=3-3]
		\arrow["f", shift left=1, from=1-3, to=3-3]
		\arrow["s", shift left=1, from=3-3, to=1-3]
		\arrow["{e_1}"', shift right=1, from=3-1, to=1-1]
		\arrow["{p_1}"', shift right=1, from=1-1, to=3-1]
		\arrow["{p_2}", shift left=1, from=1-1, to=1-3]
		\arrow["{e_2}", shift left=1, from=1-3, to=1-1]
		\arrow["\scalebox{2}{$\lrcorner$}"{anchor=center, pos=0.125}, draw=none, from=1-1, to=3-3]
	\end{tikzcd}
\end{equation*} 
where $f$ is the unique morphism from the free algebra on two generators to the free algebra on one generator such that $f(x)=x=f(y)$, $r$ is the unique morphism such that $r(x)=x$ and $s$ is the unique morphism such that $s(x)=y$. Furthermore, Theorem~4.1 in~\cite{jacqmin:2022} tells us that the variety $\mathbb{V}$ is weakly Mal'tsev if and only if $q_1(y,x)=q_2(y,x)$, where $(q_1,q_2)$ is the cokernel pair of $[e_1,e_2]\colon \mathsf{F}(x,y)+\mathsf{F}(x,y)\to P$
\begin{equation*}
	\begin{tikzcd}
		{\mathsf{F}(x,y)+\mathsf{F}(x,y)} && P && Q
		\arrow["{[e_1,e_2]}", from=1-1, to=1-3]
		\arrow["{q_1}", shift left=1, from=1-3, to=1-5]
		\arrow["{q_2}"', shift right=1, from=1-3, to=1-5]
	\end{tikzcd}
\end{equation*}
and where $(y,x)\in P$ is the unique element such that $p_1(y,x)=y$ and $p_2(y,x)=x$. For the sake of completeness, we reprove this result here for this specific property instead of applying the results of~\cite{jacqmin:2022} (Lemma~\ref{lem:WMif and only ifprojection}).

The second part of the proof consists in identifying the terms whose existence is equivalent to the equality $q_1(y,x)=q_2(y,x)$. In order to do so, we use the description of $q_1$ and $q_2$ by means of the coequalizer $q$ of the two maps $\iota_1[e_1,e_2]$ and $\iota_2[e_1,e_2]$, where $\iota_1,\iota_2$ are the two coproduct inclusions from $P$ to $P+P$:
\begin{equation*}
	\xymatrix{\mathsf{F}(x,y)+\mathsf{F}(x,y) \ar[rr]^-{[e_1,e_2]} \ar[dd]_-{[e_1,e_2]} && P \ar[dd]^-{\iota_2} \ar@/^1.2pc/[rddd]^-{q_2} \\
		&& \\
		P \ar[rr]_-{\iota_1} \ar@/_1.2pc/[rrrd]_{q_1} && {P+P} \ar[rd]^-{q} \\
		&&& Q}
\end{equation*}
We thus think of $Q$ as a quotient of $P+P$ which itself can be constructed as a quotient of the free algebra $\mathsf{F}(\mathsf{U}P+\mathsf{U}P)$ on the disjoint union of two copies of the underlying set $\mathsf{U}P$ of~$P$.

The term condition we obtain by `brute-force description' of the equality $q_1(y,x)=q_2(y,x)$ is unfortunately very long and complex. The final step of the proof is to simplify this characterization in order to get an equivalent formulation of it which is easier to get intuition of.

The paper is structured as follows. In Section~\ref{sect:Preliminaries}, we recall the necessary material for our main theorem. In particular, we recall the description of the coproduct of two non-empty algebras $A$ and $B$ in a variety $\mathbb{V}$ of universal algebras (Section~\ref{sect:Coproductoftwoalgebras}) and some characterizations of Mal'tsev and weakly Mal'tsev categories in the finitely complete context (Section~\ref{sect:(Weakly)Mal'tsevcategories}). Section~\ref{sect:WeaklyMal'tsevvarieties} proves our syntactic characterization of weakly Mal'tsev varieties (Theorem~\ref{thm:syntaxWM}). Furthermore, in Example~\ref{ex:distributivelattices}, we give the weakly Mal'tsev terms for the variety of distributive lattices. Finally, we show that a slight variation of Theorem~\ref{thm:syntaxWM} yields a syntactic characterization of the varieties in which any reflexive regular relation is an equivalence relation (Theorem~\ref{thm:syntaxwM}).

\underline{\textbf{Terminology:}} In this paper, all varieties of universal algebras are understood to be finitary and one-sorted.

\subsection*{Acknowledgements}

The authors would like to thank Marino Gran and George Ja\-ne\-lidze for their interesting comments on an earlier version of the paper. They also would like to thank the anonymous referee for their remarks that helped to improve the readability of the paper.

The first and second authors are grateful to the FNRS for its support. The first author also thanks UCLouvain for the FSR grant she received during the research leading to this article. The third author was funded by FCT/MCTES (PIDDAC): UIDP/04044/2020; Generative Thermodynamic; Associate Laboratory ARISE LA/P/0112/2020; MATIS (CENTRO-01-(0145, 0247)-FEDER-(000014, 069665, 039969, 003362)); POCI-01-0247-FEDER-(069603, 039958, 039863, 024533); by CDRSP and ESTG from the Polytechnic of Leiria.

\section{Preliminaries}\label{sect:Preliminaries}

\subsection{Coproduct of two algebras}\label{sect:Coproductoftwoalgebras}

As we will need it in the proof of our main result, we recall a description of the coproduct of two non-empty algebras $A$ and $B$ in a variety $\mathbb{V}$ of universal algebras. We denote by $\mathsf{U}\colon\mathbb{V}\to\mathsf{Set}$ the forgetful functor and $\mathsf{F}\colon\mathsf{Set}\to\mathbb{V}$ the free functor.
Then the coproduct $A+B$ is given by the quotient of the free algebra $\mathsf{F}(\mathsf{U}A+\mathsf{U}B)$ on the disjoint union of the underlying sets of $A$ and $B$ with respect to the smallest congruence $C$ that turns the two set-theoretic functions $i_1\colon A\to \mathsf{F}(\mathsf{U}A+\mathsf{U}B)$ and $i_2\colon B\to \mathsf{F}(\mathsf{U}A+\mathsf{U}B)$, that interpret elements of $A$ and $B$ as variables, into morphisms in~$\mathbb{V}$. More precisely, $C$ is the congruence generated by all pairs of the form $(i_1(\omega^A(a_1,\ldots,a_k)),\omega(i_1(a_1),\ldots,i_1(a_k)))$ or $(i_2(\omega^B(b_1,\ldots,b_k)),\omega(i_2(b_1),\ldots,i_2(b_k)))$, where $k\geqslant 0$ is an integer, $a_1,\ldots,a_k\in A$ and $b_1,\ldots,b_k\in B$ are elements, and $\omega$ is a $k$-ary operation of~$\mathbb{V}$. Here $\omega^A$ represents the realization of $\omega$ as a function $\omega^A\colon A^k\to A$ and similarly for~$\omega^B$. In the following, we will omit the functions $i_1$ and~$i_2$. Given integers $k,\ell\geqslant 0$, elements $a_1,\ldots,a_k\in A$ and $b_1,\ldots,b_{\ell}\in B$, and a $(k+\ell)$-ary term~$s$, we have that
\begin{equation}\label{eq:coproduct}
	s(\iota_1(a_1),\ldots,\iota_1(a_k),\iota_2(b_1),\ldots,\iota_2(b_{\ell}))
	=[s(a_1,\ldots,a_k,b_1,\ldots,b_{\ell})],
\end{equation}
where $\iota_1\colon A\to\mathsf{F}(\mathsf{U}A+\mathsf{U}B)/C$ and $\iota_2\colon B\to \mathsf{F}(\mathsf{U}A+\mathsf{U}B)/C$ are the coproduct inclusions induced by $i_1$ and~$i_2$ respectively, and $[\cdot]$ denotes the equivalence class of an element in $\mathsf{F}(\mathsf{U}A+\mathsf{U}B)$ with respect to~$C$. Given another algebra~$D$, and morphisms $f\colon A\to D$ and $g\colon B\to D$, the unique morphism $\varphi\colon\mathsf{F}(\mathsf{U}A+\mathsf{U}B)/C\to D$ such that $\varphi\iota_1=f$ and $\varphi\iota_2=g$ is given by
\begin{equation*}
	\varphi([s(a_1,\ldots,a_k,b_1,\ldots,b_\ell)]):=s^D(f(a_1),\ldots,f(a_k),g(b_1),\ldots,g(b_\ell)).
\end{equation*}
Alternatively, we consider the congruence $C'$ on $\mathsf{F}(\mathsf{U}A+\mathsf{U}B)$ generated by the relation $\sim$ given by all pairs of the form
\begin{multline*}
	(\tau(a_1,\ldots,a_m,b_1,\ldots,b_{n},\mu_1(a_1,\ldots,a_m),\mu_2(b_1,\ldots,b_{n})),\\
	\tau(a_1,\ldots,a_m,b_1,\ldots,b_{n},\lambda_1(a_1,\ldots,a_m),\lambda_2(b_1,\ldots,b_{n}))),
\end{multline*}
where $m,n\geqslant 0$ are integers, $a_1,\ldots,a_m\in A$ and $b_1\ldots,b_n\in B$ are elements, $\mu_1,\lambda_1$ are $m$-ary terms, $\mu_2,\lambda_2$ are $n$-ary terms and $\tau$ is an $(m+n+2)$-ary term with
\begin{align*}
	\mu^A_1(a_1,\ldots,a_m)&=\lambda^A_1(a_1,\ldots,a_m),\\
	\mu^B_2(b_1,\ldots,b_{n})&=\lambda^B_2(b_1,\ldots,b_{n}).
\end{align*}
Since $A$ and $B$ are supposed to be non-empty, the relation $\sim$ is reflexive. Furthermore, it is symmetric. We state the following classical results.

\begin{proposition}\label{prop:coproduct}
	Let $A$ and $B$ be two non-empty algebras in~$\mathbb{V}$.
	\begin{enumerate}
		\item The two congruences $C$ and $C'$ on $\mathsf{F}(\mathsf{U}A+\mathsf{U}B)$ are equal.
		\item The congruence $C'$ is given by the transitive closure of~$\sim$.
	\end{enumerate}
\end{proposition}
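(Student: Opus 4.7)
The plan is to prove Part~(1) by establishing the two inclusions $C\subseteq C'$ and $C'\subseteq C$ separately, and then to derive Part~(2) from the observation that $\sim$ is already reflexive and symmetric, so the only remaining task is to verify that the transitive closure of $\sim$ is automatically stable under the operations of the theory.

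For the inclusion $C\subseteq C'$, it suffices to exhibit each generator of $C$ as an instance of a generator of $\sim$. Given an operation $\omega$ of arity $k\geqslant 0$ and elements $a_1,\ldots,a_k\in A$, set $a_0=\omega^A(a_1,\ldots,a_k)\in A$ and pick any $b\in B$ (possible since $B$ is non-empty). I would take $m=k+1$ with variables $a_0,a_1,\ldots,a_k$, $n=1$ with variable $b$, let $\mu_1$ be the projection onto the first argument, $\lambda_1(x_0,x_1,\ldots,x_k)=\omega(x_1,\ldots,x_k)$, $\mu_2=\lambda_2$ the identity term, and $\tau$ the projection onto the $\mu_i$-slot. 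Then $\mu_1^A(a_0,\ldots,a_k)=a_0=\lambda_1^A(a_0,\ldots,a_k)$ and the resulting $\sim$-pair is exactly $(a_0,\omega(a_1,\ldots,a_k))$. The symmetric construction (swapping the roles of $A$ and $B$) recovers the generators of $C$ coming from $B$.

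For $C'\subseteq C$, I would exploit the fact that by definition of $C$ the induced maps $\iota_1\colon A\to\mathsf{F}(\mathsf{U}A+\mathsf{U}B)/C$ and $\iota_2\colon B\to\mathsf{F}(\mathsf{U}A+\mathsf{U}B)/C$ are homomorphisms. Hence for any term $\mu_1$ and any $a_1,\ldots,a_m\in A$, the class $[\mu_1(a_1,\ldots,a_m)]_C$ equals $[\mu_1^A(a_1,\ldots,a_m)]_C$, and analogously for $\lambda_1$, $\mu_2$, $\lambda_2$. The hypotheses $\mu_1^A(a_1,\ldots,a_m)=\lambda_1^A(a_1,\ldots,a_m)$ and $\mu_2^B(b_1,\ldots,b_n)=\lambda_2^B(b_1,\ldots,b_n)$ thus force $[\mu_1(\ldots)]_C=[\lambda_1(\ldots)]_C$ and $[\mu_2(\ldots)]_C=[\lambda_2(\ldots)]_C$, and then applying $\tau$ in the quotient gives the required congruence of the two sides of a generator of $\sim$.

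For Part~(2), write $T$ for the transitive closure of $\sim$. Since $\sim$ is reflexive and symmetric, $T$ is an equivalence relation contained in $C'$. To show $C'\subseteq T$, it is enough to check that $T$ is compatible with the operations, since $T$ would then be a congruence containing $\sim$. This reduces to showing that if $u\sim v$ and $\omega$ is any $k$-ary operation, then for arbitrary $w_1,\ldots,w_{j-1},w_{j+1},\ldots,w_k\in\mathsf{F}(\mathsf{U}A+\mathsf{U}B)$ the elements $\omega(w_1,\ldots,w_{j-1},u,w_{j+1},\ldots,w_k)$ and $\omega(w_1,\ldots,w_{j-1},v,w_{j+1},\ldots,w_k)$ are again $\sim$-related; full compatibility then follows by replacing arguments one at a time and using transitivity. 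To establish this single-slot compatibility, I would write each $w_i=t_i(\vec{c}_i,\vec{d}_i)$ with $\vec{c}_i\in A$ and $\vec{d}_i\in B$, and absorb $\omega$ together with the $t_i$ into a new term $\tau'$ built from $\tau$, extending the lists of $A$- and $B$-variables by the $\vec{c}_i$ and $\vec{d}_i$ and extending $\mu_1,\lambda_1,\mu_2,\lambda_2$ to ignore the new variables; the crucial equalities $\mu_1^A=\lambda_1^A$ and $\mu_2^B=\lambda_2^B$ are preserved.

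The main obstacle is the bookkeeping in Part~(2): one must carefully merge the variable lists of $\tau$ with those coming from the $w_i$'s and keep $\mu_1,\lambda_1,\mu_2,\lambda_2$ consistent with the enlarged setup. Once the correct $\tau'$ is written down, both parts reduce to straightforward verifications using the defining properties of the congruences $C$ and $C'$.
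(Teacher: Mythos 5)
Your proof is correct. The paper itself gives no argument for this proposition (it is stated as a classical result), so there is nothing to compare against; your two inclusions for Part~(1) and the single-slot substitution argument for Part~(2) are exactly the standard verifications, and you correctly identify the two places where non-emptiness of $A$ and $B$ matters (choosing a dummy $b\in B$ to realize a generator of $C$ as a $\sim$-pair, and the reflexivity of $\sim$ that lets one pad chains). The only step left implicit is that $[\mu_1(\iota_1(a_1),\ldots,\iota_1(a_m))]_C=[\iota_1(\mu_1^A(a_1,\ldots,a_m))]_C$ requires a routine induction on the structure of the term $\mu_1$, since the generators of $C$ only treat a single operation at a time; this is harmless but worth a sentence in a written-out version.
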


\subsection{(Weakly) Mal'tsev categories}\label{sect:(Weakly)Mal'tsevcategories}

Let us recall some characterizations of Mal'tsev categories in the finitely complete context.

\begin{definition}[Mal'tsev category~\cite{carboni.pedicchio.pirovano:1992}]\label{def:Mal'tsevcategory}
Let $\mathfrak{C}$ be a finitely complete category. We call $\mathfrak{C}$ a \emph{Mal'tsev category} if any reflexive relation $r\colon R\rightarrowtail X\times X$ in $\mathfrak{C}$ is an equivalence relation.
\end{definition}

We recall that a relation $R\subseteq X\times Y$ between two sets $X$ and $Y$ is called \emph{difunctional}~\cite{riguet:1948} if, for given elements $x,x'\in X$ and $y,y'\in Y$, the conditions $xRy$, $x'Ry$, $x'Ry'$ imply that $xRy'$. This can be immediately generalized to relations in an arbitrary category using generalized elements.

\begin{proposition}\cite{carboni.pedicchio.pirovano:1992}\label{prop:characterizationMal'tsevcategoriesrelations}
Let $\mathfrak{C}$ be a finitely complete category. Then the following conditions are equivalent:
\begin{enumerate}
	\item The category $\mathfrak{C}$ is a Mal'tsev category. 
	\item Any reflexive relation $r\colon R\rightarrowtail X\times X$ in $\mathfrak{C}$ is symmetric.
	\item Any reflexive relation $r\colon R\rightarrowtail X\times X$ in $\mathfrak{C}$ is transitive.
	\item Any relation $r\colon R\rightarrowtail X\times X$ in $\mathfrak{C}$ is difunctional.
\end{enumerate}
\end{proposition}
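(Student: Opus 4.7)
My plan is to establish the four-way equivalence via $(1)\Rightarrow(2)\Rightarrow(4)\Rightarrow(1)$, supplemented by the detour $(1)\Rightarrow(3)\Rightarrow(4)$. The implications $(1)\Rightarrow(2)$ and $(1)\Rightarrow(3)$ are immediate from the definition of an equivalence relation. For $(4)\Rightarrow(1)$, if $R\rightarrowtail X\times X$ is reflexive and $aRb$ is a generalized element, the triple $bRb$, $aRb$, $aRa$---two instances of reflexivity flanking the given relation---fits the difunctionality pattern and yields $bRa$, while the triple $aRb$, $bRb$, $bRc$ yields $aRc$. Thus the real content is to derive $(4)$ from either $(2)$ or $(3)$.

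The crux of the argument, and the step I expect to be the main obstacle, is to transfer a hypothesis about reflexive relations on arbitrary objects to a difunctionality statement about a given relation $\langle p_1,p_2\rangle\colon R\rightarrowtail X\times Y$. I would do this by forming the pullback $T\rightarrowtail R\times R$ of $\langle p_1,p_2\rangle$ along the morphism $R\times R\to X\times Y$ whose components are $p_1\pi_1$ and $p_2\pi_2$, where $\pi_1,\pi_2\colon R\times R\to R$ are the product projections. In terms of generalized elements, $(r,r')\in T$ precisely when $p_1(r)\,R\,p_2(r')$. Since the composite of the diagonal $\Delta_R\colon R\to R\times R$ with the defining morphism of $T$ equals $\langle p_1,p_2\rangle$ itself, $\Delta_R$ factors through $T$, so $T$ is a reflexive relation on $R$.

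Once $T$ is in place, the deductions $(2)\Rightarrow(4)$ and $(3)\Rightarrow(4)$ become mechanical. Writing $r_1=(x,y)$, $r_2=(x',y)$ and $r_3=(x',y')$ for the elements of $R$ witnessing the difunctionality hypotheses $xRy$, $x'Ry$ and $x'Ry'$, the pair $(r_3,r_1)$ lies in $T$ because $x'Ry$; so by $(2)$, symmetry of $T$ gives $(r_1,r_3)\in T$, which reads $xRy'$. Under $(3)$, both $(r_1,r_2)\in T$ (from $xRy$) and $(r_2,r_3)\in T$ (from $x'Ry'$), so transitivity of $T$ gives $(r_1,r_3)\in T$, again $xRy'$. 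The remaining verification that the generalized-element description of $T$ faithfully captures the pullback is routine in the finitely complete setting.
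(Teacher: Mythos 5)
Your proof is correct and takes essentially the same route as the paper: while the proposition itself is only cited from Carboni--Pedicchio--Pirovano, the paper proves a generalization to $\mathcal{M}$-Mal'tsev categories in Section~4 (which it notes recovers this statement), and its key construction is exactly your pullback $T\rightarrowtail R\times R$ of $r$ along $p_1\times p_2$ (called $\rho\colon P\rightarrowtail R\times R$ there), with difunctionality of $r$ reduced to symmetry, respectively transitivity, of this reflexive relation. No issues.
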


In the following, we will also use another characterization of those finitely complete categories which are Mal'tsev categories. Let $f\colon X\to Z$ and $g\colon Y\to Z$ be two split epimorphisms in a finitely complete category $\mathfrak{C}$ with splittings $r\colon Z\to X$ and $s\colon Z\to Y$ respectively, i.e., $fr=1_Z=gs$. We consider the pullback
\begin{equation}\label{diagr:genericpullbackofsplitepis}
	\begin{tikzcd}
		{X\times_Z Y} && Y \\
		\\
		X && Z
		\arrow["f"', shift right=1, from=3-1, to=3-3]
		\arrow["r"', shift right=1, from=3-3, to=3-1]
		\arrow["g", shift left=1, from=1-3, to=3-3]
		\arrow["s", shift left=1, from=3-3, to=1-3]
		\arrow["{p_1}"', shift right=1, from=1-1, to=3-1]
		\arrow["{e_1}"', shift right=1, dashed, from=3-1, to=1-1]
		\arrow["{p_2}", shift left=1, from=1-1, to=1-3]
		\arrow["{e_2}", shift left=1, dashed, from=1-3, to=1-1]
		\arrow["\scalebox{2}{$\lrcorner$}"{anchor=center, pos=0.125}, draw=none, from=1-1, to=3-3]
	\end{tikzcd}
\end{equation}
of $f$ along~$g$. Let $e_1\colon X\to X\times_Z Y$ and $e_2\colon Y\to X\times_Z Y$ be the unique morphisms such that
\begin{align*}
	&p_1 e_1=1_X, 
	&p_2 e_1=sf,\\
	&p_1 e_2=rg,
	&p_2 e_2=1_Y.
\end{align*} 

\begin{proposition}\cite{bourn:1996}\label{prop:characterizationMal'tsevcategoriespullbacks}
Let $\mathfrak{C}$ be a finitely complete category. Then the following conditions are equivalent:
\begin{enumerate}
	\item The category $\mathfrak{C}$ is a Mal'tsev category.
	\item For any Diagram~\eqref{diagr:genericpullbackofsplitepis} in~$\mathfrak{C}$, the morphisms $e_1\colon X\to X\times_Z Y$ and $e_2\colon Y\to X\times_Z Y$ are jointly extremally epimorphic, i.e., if there exist an object $M$ and morphisms $f_1\colon X\to M$, $f_2\colon Y\to M$ and a monomorphism $m\colon M\rightarrowtail X\times_Z Y$ such that $m f_i=e_i$ for each $i\in\{1,2\}$, then $m$ is an isomorphism.
\end{enumerate}
\end{proposition}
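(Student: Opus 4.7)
I would prove the two implications separately, each via a different item of Proposition~\ref{prop:characterizationMal'tsevcategoriesrelations}.

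For $(1)\Rightarrow(2)$, I would use the difunctionality characterization (extended to relations between two possibly distinct objects, which is standard in Mal'tsev categories). Given Diagram~\eqref{diagr:genericpullbackofsplitepis} and a factorization $m f_i = e_i$ through a monomorphism $m\colon M \rightarrowtail X \times_Z Y$, the composite $\langle p_1, p_2\rangle m\colon M \rightarrowtail X \times Y$ exhibits $M$ as a relation from $X$ to~$Y$. The image of $e_1$ places all pairs of the form $(x, sf(x))$ inside $M$, while the image of $e_2$ places all pairs of the form $(rg(y), y)$ inside~$M$. Using generalized elements, for any $(x,y) \in X \times_Z Y$ with $f(x) = g(y) =: z$, the three pairs $(x, sz) = e_1(x)$, $(rz, sz) = e_2(sz)$ and $(rz, y) = e_2(y)$ all lie in~$M$, so difunctionality forces $(x,y) \in M$. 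Hence $\langle p_1, p_2\rangle m$ and $\langle p_1, p_2\rangle$ determine the same subobject of $X \times Y$, so $m$ is an isomorphism.

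For the converse $(2)\Rightarrow(1)$, by Proposition~\ref{prop:characterizationMal'tsevcategoriesrelations}(3) it suffices to prove that every reflexive relation $r = \langle d_0, d_1\rangle\colon R \rightarrowtail X \times X$, with diagonal $\delta\colon X \to R$, is transitive. I would form the pullback of the split epimorphisms $d_1$ and $d_0$ (each split by $\delta$), obtaining canonical injections $\overline{e_1}, \overline{e_2}\colon R \to R \times_X R$, and introduce the morphism $\tau = \langle d_0 \overline{p_1}, d_1 \overline{p_2}\rangle\colon R \times_X R \to X \times X$. A direct computation yields $\tau \overline{e_1} = r = \tau \overline{e_2}$, so pulling back the monomorphism~$r$ along~$\tau$ produces a monomorphism $b\colon P \rightarrowtail R \times_X R$ through which both $\overline{e_1}$ and $\overline{e_2}$ factor. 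Hypothesis~(2) then forces $b$ to be an isomorphism, and the resulting morphism $R \times_X R \to R$ is precisely the composition law witnessing transitivity of~$R$.

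The main obstacle is not the categorical reasoning itself (which reduces to routine diagram chases using the universal property of pullbacks) but rather the bookkeeping in $(1)\Rightarrow(2)$: one must keep track of $M$ simultaneously as a subobject of $X \times_Z Y$ and as a relation from $X$ to $Y$, and appeal to difunctionality in the slightly more general setting of a relation between two distinct objects. This generalization of Proposition~\ref{prop:characterizationMal'tsevcategoriesrelations}(4) is classical but should be explicitly invoked, either by reducing to the single-object case or by referring to the original categorical formulation of Carboni--Pedicchio--Pirovano. Once this point is settled, both directions follow formally.
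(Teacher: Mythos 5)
The paper does not prove this statement at all: it is recalled from Bourn's article and used as a black box, so there is no in-paper argument to compare against. Your proof is correct and is essentially the standard one. In $(1)\Rightarrow(2)$ the three generalized elements $(x,sz)=\langle p_1,p_2\rangle e_1 x$, $(rz,sz)=\langle p_1,p_2\rangle e_2 sz$ and $(rz,y)=\langle p_1,p_2\rangle e_2 y$ do all factor through $M$ because $e_i=mf_i$, difunctionality then puts $(x,y)$ in $M$, and applying this to the generic element $1_{X\times_Z Y}$ makes $m$ a split epimorphism, hence an isomorphism. In $(2)\Rightarrow(1)$ the computation $\tau\overline{e_1}=r=\tau\overline{e_2}$ is right, the pullback of the monomorphism $r$ along $\tau$ is a monomorphism through which both injections factor, and the resulting section of it yields the factorization $\tau=rm$, which is exactly transitivity; combined with Proposition~\ref{prop:characterizationMal'tsevcategoriesrelations} this closes the loop. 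The one point you rightly flag --- difunctionality for a relation $R\rightarrowtail X\times Y$ between two distinct objects, whereas item~4 of Proposition~\ref{prop:characterizationMal'tsevcategoriesrelations} is stated only for relations on a single object --- should indeed be made explicit; the cleanest fix inside a merely finitely complete category (where one cannot pass to a coproduct $X+Y$) is the standard reduction to the relation $S=(\pi_X\times\pi_Y)^{-1}(R)\rightarrowtail (X\times Y)\times(X\times Y)$, whose difunctionality is equivalent to that of $R$. With that remark added, both directions are complete.
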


A direct generalization of this characterization of Mal'tsev categories gives rise to the notion of a weakly Mal'tsev category.

\begin{definition}[Weakly Mal'tsev category~\cite{martins-ferreira:2008}]\label{def:weaklyMal'tsevcategory}
Let $\mathfrak{C}$ be a category. We call $\mathfrak{C}$ a \emph{weakly Mal'tsev category} if the following conditions hold:
\begin{enumerate}
	\item The category $\mathfrak{C}$ possesses all pullbacks of split epimorphisms along split epimorphisms.
	\item For any Diagram~\eqref{diagr:genericpullbackofsplitepis} in~$\mathfrak{C}$, the morphisms $e_1\colon X\to X\times_Z Y$ and $e_2\colon Y\to X\times_Z Y$ are jointly epimorphic, i.e, given an object $A$ and morphisms $u,v\colon X\times_Z Y \to A$ such that $ue_1=ve_1$ and $ue_2=ve_2$, then $u=v$.
\end{enumerate}
\end{definition}

We call a relation $r\colon R\rightarrowtail X\times Y$ \emph{strong} if it is a strong monomorphism, i.e., for any epimorphism $e\colon A\twoheadrightarrow B$ and morphisms $a\colon A\to R$ and $b\colon B\to X\times Y$ such that the outer part of the diagram
\begin{equation*}
	\begin{tikzcd}
		A && B \\
		\\
		R && {X\times Y}
		\arrow["r"', tail, from=3-1, to=3-3]
		\arrow["e", two heads, from=1-1, to=1-3]
		\arrow["a"', from=1-1, to=3-1]
		\arrow["b", from=1-3, to=3-3]
		\arrow["d", dashed, from=1-3, to=3-1]
	\end{tikzcd}
\end{equation*}
commutes, there exists a unique morphism $d\colon B\to R$ such that $de=a$ and $rd=b$.

\begin{proposition}\cite{janelidze.martins-ferreira:2012}\label{prop:characterizationweaklyMal'tsevcategories}
Let $\mathfrak{C}$ be a finitely complete category. Then the following conditions are equivalent:
\begin{enumerate}
	\item The category $\mathfrak{C}$ is a weakly Mal'tsev category.
	\item Any reflexive strong relation $r\colon R\rightarrowtail X\times X$ in $\mathfrak{C}$ is an equivalence relation.
	\item Any reflexive strong relation $r\colon R\rightarrowtail X\times X$ in $\mathfrak{C}$ is symmetric.
	\item Any reflexive strong relation $r\colon R\rightarrowtail X\times X$ in $\mathfrak{C}$ is transitive.
	\item Any strong relation $r\colon R\rightarrowtail X\times X$ in $\mathfrak{C}$ is difunctional.
\end{enumerate}
\end{proposition}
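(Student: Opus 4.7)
The proof will follow the pattern of Proposition~\ref{prop:characterizationMal'tsevcategoriesrelations}, with joint extremal epimorphism replaced by joint epimorphism and arbitrary monomorphism replaced by strong monomorphism. The implications $(2)\Rightarrow(3)$ and $(2)\Rightarrow(4)$ are immediate from the definition of an equivalence relation, and $(5)\Rightarrow(2)$ follows from the classical observation that a reflexive difunctional relation is automatically an equivalence relation: applying difunctionality to appropriate triples of generalized elements involving diagonal witnesses of reflexivity yields both symmetry and transitivity. The nontrivial content is therefore $(1)\Rightarrow(5)$ together with the converses $(3)\Rightarrow(1)$ and $(4)\Rightarrow(1)$.

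The central tool I would establish first is a lifting lemma: in a finitely complete category, if $r\colon R\rightarrowtail Z$ is a strong monomorphism and $e_1,e_2\colon R\to S$ are jointly epimorphic with $\mu\colon S\to Z$ such that each composite $\mu e_i$ factors through $r$, then $\mu$ itself factors through $r$. The argument is to form the pullback $P=S\times_Z R$ with projection $\pi_S\colon P\to S$; as a pullback of $r$, the map $\pi_S$ is itself a strong monomorphism, while the factorizations of $\mu e_1$ and $\mu e_2$ through $r$ lift to maps $\tilde e_i\colon R\to P$ with $\pi_S\tilde e_i=e_i$. Joint epimorphism of $e_1$ and $e_2$ then forces $\pi_S$ to be an epimorphism; being simultaneously a strong mono and an epi, it is an isomorphism, yielding the required factorization.

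With this lemma in hand, $(1)\Rightarrow(3)$ and $(1)\Rightarrow(4)$ follow by applying it to the pullback of $d_1$ and $d_2\colon R\to X$, both split by the reflexivity section $s_X\colon X\to R$, together with the obvious ``symmetry'' and ``composition'' candidates $\mu\colon R\times_X R\to X\times X$ built from the two projections of $R$. The weakly Mal'tsev hypothesis furnishes joint epimorphism of the pullback injections, and the lemma then produces the witnessing map into~$R$. The proof of $(1)\Rightarrow(5)$ follows the same pattern, now applied to a strong relation $R\rightarrowtail X\times Y$ and a suitable pullback of split epimorphisms constructed from the three generalized elements $(x,y),(x',y),(x',y')$ of $R$ that witness the hypothesis of difunctionality.

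For the converses $(3)\Rightarrow(1)$ and $(4)\Rightarrow(1)$, given a pullback of split epimorphisms as in~\eqref{diagr:genericpullbackofsplitepis} and maps $u,v\colon X\times_Z Y\to A$ satisfying $ue_i=ve_i$, the strategy is to build a reflexive strong relation on $X\times_Z Y$ whose symmetry (respectively transitivity) is equivalent to $u=v$. A natural candidate arises as a pullback of the diagonal $\Delta_A\colon A\to A\times A$ (a split, hence strong, monomorphism) along a suitable map $(X\times_Z Y)\times(X\times_Z Y)\to A\times A$ constructed from $u$ and~$v$; reflexivity should follow from factorizations through $e_1$ and $e_2$ exploiting the hypotheses $ue_i=ve_i$. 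The main obstacle I anticipate is precisely this converse step: calibrating the construction so that the hypotheses yield reflexivity (rather than trivializing the relation) while the symmetry or transitivity conclusion forces $u=v$ on all of $X\times_Z Y$.
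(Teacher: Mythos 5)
The paper itself does not prove this proposition; it quotes it from \cite{janelidze.martins-ferreira:2012}, and the closest thing to a proof in the paper is the $\mathcal{M}$-Mal'tsev proposition of Section~\ref{sect:AnotherMal'tsev-likeproperty}, which establishes the equivalence of (2)--(5) for any pullback-stable class of relations. Against that background: your lifting lemma is correct and is indeed the right engine for the forward directions --- a pullback of a strong mono along $\mu$ through which a jointly epimorphic pair factors is both a strong mono and an epi, hence invertible. Applied to the pullback of $d_2$ along $d_1$ (both split by the reflexivity section) with $\mu=(d_1\pi_1,d_2\pi_2)$ it gives $(1)\Rightarrow(4)$, and applied to the pullback of split epimorphisms $(R\times_YR)\times_R(R\times_XR)$ (kernel-pair projections split by the diagonals) with $\mu$ sending $(\rho_1,\rho_2,\rho_3)$ to $(r_1\rho_1,r_2\rho_3)$ it gives $(1)\Rightarrow(5)$. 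One caveat: your parallel claim for a \emph{direct} $(1)\Rightarrow(3)$ does not go through --- on any pullback of $d_1,d_2$ against each other, every $\mu$ built from projections whose composites with $e_1,e_2$ land in $R$ yields only the transitivity conclusion; symmetry must be deduced from difunctionality plus reflexivity. This is harmless, since your cycle $1\Rightarrow5\Rightarrow2\Rightarrow3\Rightarrow1$ makes that direct argument redundant.

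The genuine gap is the converse step, exactly where you flag it, and the construction you propose will not calibrate. A pullback of $\Delta_A$ along $(h_1,h_2)\colon(X\times_ZY)^2\to A\times A$ is the relation $\{(p,q)\mid h_1(p,q)=h_2(p,q)\}$; reflexivity forces $h_1$ and $h_2$ to agree on the diagonal, and no choice of $h_1,h_2$ in terms of $u,v$ simultaneously gets reflexivity out of $ue_i=ve_i$ and extracts $u=v$ from symmetry or transitivity (the candidates $(u(p),v(q))$ and $(u(p),u(q))$ fail for the two opposite reasons you anticipate). The working argument is two-stage and does not produce a reflexive relation on $X\times_ZY$ at all. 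First prove $(5)\Rightarrow(1)$: form the equalizer $E=\Eq{u,v}\rightarrowtail X\times_ZY$; as a regular mono composed with the strong mono $X\times_ZY\rightarrowtail X\times Y$ (a pullback of the split mono $\Delta_Z$), it is a strong relation from $X$ to $Y$ through which $e_1$ and $e_2$ factor. For any generalized element $(x,y)$ of $X\times_ZY$ the three elements $(x,sf(x))$, $(rg(y),sf(x))$ and $(rg(y),y)$ lie in $E$, so difunctionality of $E$ forces $(x,y)\in E$; hence $E=X\times_ZY$ and $u=v$. Then reduce $(3)$ and $(4)$ to $(5)$ by the pullback-stability trick the paper spells out in Section~\ref{sect:AnotherMal'tsev-likeproperty}: pulling a strong relation $R\rightarrowtail X\times Y$ back along $r_1\times r_2\colon R\times R\to X\times Y$ yields a \emph{reflexive} strong relation on $R$ whose symmetry (equivalently, transitivity) is precisely difunctionality of $R$. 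With that replacement for your last paragraph, the outline closes up.
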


\section{Weakly Mal'tsev varieties}\label{sect:WeaklyMal'tsevvarieties}

Our aim is to find a syntactic characterization of the varieties which are weakly Mal'tsev categories. Let us recall the classical theorem of Mal'tsev which identifies all the varieties that are Mal'tsev categories.

\begin{theorem}[Mal'tsev's theorem~\cite{mal'tsev:1954,mal'tsev:1963}]\label{thm:Mal'tsev}
Let $\mathbb{V}$ be a variety. The category $\mathbb{V}$ is Mal'tsev if and only if there exists a ternary term $p\in\mathsf{F}(x,y,z)$ such that $p(x,x,y)=y$ and $p(x,y,y)=x$.
\end{theorem}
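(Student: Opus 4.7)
The plan is to prove the two implications separately, using the characterization of Mal'tsev categories via symmetry of reflexive relations from Proposition~\ref{prop:characterizationMal'tsevcategoriesrelations}. The forward direction will construct a reflexive relation in a free algebra whose symmetry forces the existence of the desired term; the backward direction will apply the term coordinatewise to pairs in an arbitrary reflexive relation.

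For the sufficiency direction, suppose that a ternary term $p$ with $p(x,x,y)=y$ and $p(x,y,y)=x$ exists in the theory of $\mathbb{V}$. Given any reflexive relation $r\colon R\rightarrowtail X\times X$ in $\mathbb{V}$ and any generalized element $(a,b)$ of $R$, reflexivity provides the elements $(a,a)$ and $(b,b)$ in $R$. Since $R$ is a subalgebra of $X\times X$, applying the ternary operation $p$ componentwise to $(a,a),(a,b),(b,b)$ yields $(p(a,a,b),p(a,b,b))=(b,a)\in R$. Thus every reflexive relation in $\mathbb{V}$ is symmetric, and by Proposition~\ref{prop:characterizationMal'tsevcategoriesrelations} the category $\mathbb{V}$ is Mal'tsev.

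For the necessity direction, assume $\mathbb{V}$ is Mal'tsev. Consider the subalgebra $R$ of $\mathsf{F}(x,y)\times\mathsf{F}(x,y)$ generated by the three pairs $(x,x)$, $(x,y)$ and $(y,y)$. A typical element of $R$ has the form $(t(x,x,y),t(x,y,y))$ for some ternary term $t$. To see that $R$ is a reflexive relation, pick any $s(x,y)\in\mathsf{F}(x,y)$ and take the ternary term $t(u,v,w):=s(u,w)$ (which ignores its middle argument); this gives $(s(x,y),s(x,y))\in R$, so $R$ contains the diagonal. Being a reflexive relation in a Mal'tsev category, $R$ must be symmetric by Proposition~\ref{prop:characterizationMal'tsevcategoriesrelations}. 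Since $(x,y)\in R$, symmetry yields $(y,x)\in R$, hence there exists a ternary term $p$ such that $(y,x)=(p(x,x,y),p(x,y,y))$, which means precisely $p(x,x,y)=y$ and $p(x,y,y)=x$ as identities in the free algebra, and therefore throughout $\mathbb{V}$.

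The only delicate step is the verification that $R$ is reflexive, since the generators $(x,x), (x,y), (y,y)$ are not obviously enough to produce $(s(x,y),s(x,y))$ for arbitrary $s$; the key trick is to exploit the flexibility of choosing a ternary term that projects away the coordinate distinguishing $(x,y)$ from the diagonal pairs. Once this is in place, the identification of the term $p$ from the membership $(y,x)\in R$ is immediate from the form of the elements of $R$.
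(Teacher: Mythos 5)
Your proof is correct, but it takes a genuinely different route from the one the paper uses for Theorem~\ref{thm:Mal'tsev}. The paper proves the theorem via the pullback characterization of Proposition~\ref{prop:characterizationMal'tsevcategoriespullbacks}: it forms the pullback $P$ of $f\colon\mathsf{F}(x,y)\to\mathsf{F}(x)$ along itself with the two sections $r,s$, shows that $\mathbb{V}$ is Mal'tsev if and only if $(y,x)\in\Image([e_1,e_2])$ (Lemma~\ref{lem:Mal'tsevif and only ifimage}), and then computes $\Image([e_1,e_2])=\{(p(x,x,y),p(x,y,y))\mid p\in\mathsf{F}(x,y,z)\}$ by expanding a general element of the coproduct $\mathsf{F}(x,y)+\mathsf{F}(x,y)$ (Lemma~\ref{lem:imageif and only ifp}). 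You instead use the reflexive-relation characterization of Proposition~\ref{prop:characterizationMal'tsevcategoriesrelations} together with the subalgebra $R\subseteq\mathsf{F}(x,y)\times\mathsf{F}(x,y)$ generated by $(x,x)$, $(x,y)$, $(y,y)$; this is precisely the argument the paper itself gives later, in Section~\ref{sect:AnotherMal'tsev-likeproperty} (Lemmas~\ref{lem:Mal'tsevif and only ifRcontains(y,x)} and~\ref{lem:Rcontains(y,x)if and only ifp}), where the same set $\{(p(x,x,y),p(x,y,y))\mid p\in\mathsf{F}(x,y,z)\}$ reappears as the smallest relation containing the three generators, and where your ``delicate step'' on reflexivity is handled by the identical trick $p(x,y,z):=t(x,z)$. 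Your route is shorter and more elementary; what the paper's pullback-based route buys is that it is the one that generalizes to weakly Mal'tsev varieties, where $[e_1,e_2]$ is only required to be an epimorphism and surjectivity onto $(y,x)$ must be replaced by the cokernel-pair condition $q_1(y,x)=q_2(y,x)$ of Lemma~\ref{lem:WMif and only ifprojection}, whereas the relation-based route instead generalizes to strong or regular relations as in Theorem~\ref{thm:syntaxwM}.
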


We recall a proof of Mal'tsev's theorem which relies on the characterization of Mal'tsev categories as those finitely complete categories in which the canonical inclusions into a pullback of split epimorphisms are jointly extremally epimorphic that we recalled in Proposition~\ref{prop:characterizationMal'tsevcategoriespullbacks}. We will then adapt it for the case of weakly Mal'tsev categories. Let us consider the pullback
\begin{equation*}
	\begin{tikzcd}
		P && {\mathsf{F}(x,y)} \\
		\\
		{\mathsf{F}(x,y)} && {\mathsf{F}(x),}
		\arrow["r"', shift right=1, from=3-3, to=3-1]
		\arrow["f"', shift right=1, from=3-1, to=3-3]
		\arrow["f", shift left=1, from=1-3, to=3-3]
		\arrow["s", shift left=1, from=3-3, to=1-3]
		\arrow["{e_1}"', shift right=1, from=3-1, to=1-1]
		\arrow["{p_1}"', shift right=1, from=1-1, to=3-1]
		\arrow["{p_2}", shift left=1, from=1-1, to=1-3]
		\arrow["{e_2}", shift left=1, from=1-3, to=1-1]
		\arrow["\scalebox{2}{$\lrcorner$}"{anchor=center, pos=0.125}, draw=none, from=1-1, to=3-3]
	\end{tikzcd}
\end{equation*}
where $\mathsf{F}(x)$ and $\mathsf{F}(x,y)$ are the free algebras on the variables $x$ and $x,y$ respectively, $f$ is the unique morphism such that $f(x)=x=f(y)$, $r$ is the unique morphism such that $r(x)=x$ and $s$ is the unique morphism such that $s(x)=y$. It is clear that both $r$ and $s$ yield splittings of~$f$. The arrows $p_1,p_2$ are the pullback projections and $e_1,e_2$ are the canonical pullback injections. The algebra $P$ is given by all the pairs $(t_1,t_2)\in\mathsf{F}(x,y)\times\mathsf{F}(x,y)$ of binary terms such that the equation $t_1(x,x)=t_2(x,x)$ holds in~$\mathbb{V}$. We note that, since varieties are cocomplete regular categories, the condition that $e_1,e_2$ are jointly extremally epimorphic is equivalent to the condition that their induced morphism $[e_1,e_2]\colon \mathsf{F}(x,y)+\mathsf{F}(x,y)\to P$ is a regular epimorphism, i.e., a surjective homomorphism. We split the proof of Mal'tsev's theorem into two parts (Lemmas~\ref{lem:Mal'tsevif and only ifimage} and~\ref{lem:imageif and only ifp}).

\begin{lemma}\label{lem:Mal'tsevif and only ifimage}
The variety $\mathbb{V}$ is a Mal'tsev category if and only if $(y,x)\in\Image([e_1,e_2])$.
\end{lemma}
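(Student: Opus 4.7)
The plan is to prove both implications via Proposition~\ref{prop:characterizationMal'tsevcategoriespullbacks}, which characterises Mal'tsev categories through the joint extremal epimorphicity of pullback injections of split epimorphisms. For the forward direction, if $\mathbb{V}$ is Mal'tsev then Proposition~\ref{prop:characterizationMal'tsevcategoriespullbacks} applied to the given pullback makes $e_1,e_2$ jointly extremally epimorphic, which in a variety is equivalent to $[e_1,e_2]$ being surjective, as already noted in the paragraph preceding the lemma. Since $(y,x)\in P$, it therefore lies in $\Image([e_1,e_2])$.

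For the reverse direction, I first translate the hypothesis into a term-theoretic identity. Identifying $\mathsf{F}(x,y)+\mathsf{F}(x,y)$ with $\mathsf{F}(x_1,y_1,x_2,y_2)$, an arbitrary element is $T(x_1,y_1,x_2,y_2)$ for some $4$-ary term $T$, and a direct computation from the definitions of $e_1,e_2,f,r,s$ gives
\begin{equation*}
[e_1,e_2](T(x_1,y_1,x_2,y_2)) = (T(x,y,x,x),T(y,y,x,y)).
\end{equation*}
So the hypothesis provides a $4$-ary term $T$ satisfying the identities $T(x,y,x,x)=y$ and $T(y,y,x,y)=x$ in $\mathbb{V}$. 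I then use Proposition~\ref{prop:characterizationMal'tsevcategoriespullbacks} in the other direction: to conclude that $\mathbb{V}$ is Mal'tsev, it suffices to check that for an arbitrary pullback of split epimorphisms as in Diagram~\eqref{diagr:genericpullbackofsplitepis}, the induced map $[e_1,e_2]\colon X+Y\to X\times_Z Y$ is surjective. Given $(a,b)\in X\times_Z Y$ and setting $z:=f(a)=g(b)$, I evaluate the operation $T$ coordinatewise at the four elements $e_1(rz)=(rz,sz)$, $e_1(a)=(a,sz)$, $e_2(b)=(rz,b)$, $e_2(sz)=(rz,sz)$ of $X\times_Z Y$. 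Applying the two identities of $T$, together with $fr=1_Z$ and $gs=1_Z$, collapses the first coordinate to $a$ and the second to $b$, which shows that $(a,b)\in\Image([e_1,e_2])$.

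The only non-routine step is selecting the four elements and their order so that the identities of $T$ apply correctly to both coordinates simultaneously; this choice is dictated by the free case, where the roles of the generators $x,y\in X$ are played in the general setting by $rz,a$, and those of $x,y\in Y$ by $b,sz$. Once the placement is fixed, the rest is a matter of substitution.
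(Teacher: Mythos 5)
Your proof is correct. The forward direction is the same as the paper's. For the converse, both arguments ultimately rest on the same substitution --- the generators $x,y$ of the first copy of $\mathsf{F}(x,y)$ go to $rz$ and $a$, those of the second copy to $b$ and $sz$, where $z=f(a)=g(b)$ --- but the packaging differs. The paper transports the witness $X$ with $[e_1,e_2](X)=(y,x)$ along the induced morphism $\delta\colon P\to P'$ using the commuting square~\eqref{diagr:pullbackofsplitepis}, so no term ever appears explicitly in that lemma. You instead unfold the hypothesis into a $4$-ary term $T$ with $T(x,y,x,x)=y$ and $T(y,y,x,y)=x$ and verify coordinatewise that $T\bigl(e_1(rz),e_1(a),e_2(b),e_2(sz)\bigr)=(a,b)$; both coordinate computations check out, and the passage from surjectivity of the induced map to joint extremal epimorphicity is justified exactly as in the paragraph preceding the lemma. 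Your route in effect re-derives the content of Lemma~\ref{lem:imageif and only ifp} along the way (indeed $p(x,y,z):=T(y,z,x,y)$ is a Mal'tsev term), which makes the argument more computational and self-contained, at the cost of duplicating work the paper deliberately defers to the next lemma; the paper's version is the one that generalizes cleanly to the weakly Mal'tsev setting of Lemma~\ref{lem:WMif and only ifprojection}, where no explicit witness term is available.
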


\begin{proof}
"$\Rightarrow$": If $\mathbb{V}$ is a Mal'tsev category, the map $[e_1,e_2]$ is a surjection. The claim follows since clearly $(y,x)\in P$.
	
"$\Leftarrow$": Let
\begin{equation}\label{diagr:general pullback split epis}
	\begin{tikzcd}
		{P'} && B \\
		\\
		A && C
		\arrow["{r'}"', shift right=1, from=3-3, to=3-1]
		\arrow["{f'}"', shift right=1, from=3-1, to=3-3]
		\arrow["{g'}", shift left=1, from=1-3, to=3-3]
		\arrow["{s'}", shift left=1, from=3-3, to=1-3]
		\arrow["{p_1'}"', shift right=1, from=1-1, to=3-1]
		\arrow["{e_1'}"', shift right=1, from=3-1, to=1-1]
		\arrow["{p_2'}", shift left=1, from=1-1, to=1-3]
		\arrow["{e_2'}", shift left=1, from=1-3, to=1-1]
		\arrow["\scalebox{2}{$\lrcorner$}"{anchor=center, pos=0.125}, draw=none, from=1-1, to=3-3]
	\end{tikzcd}
\end{equation}
be a pullback of split epimorphisms $f',g'$ in $\mathbb{V}$ with $r',s'$ the respective splittings and $e_1',e_2'$ the canonical inclusions. We show that $[e_1',e_2']\colon A+B\to P'$ is surjective. To this end, let $(a,b)\in P'=\{(a',b')\in A\times B\mid f'(a')=g'(b')\}$ be arbitrary. It is easy to see that the unique morphism $\alpha\colon \mathsf{F}(x,y)\to A$ such that $\alpha(x)=r'f'(a)$ and $\alpha(y)=a$, and the unique morphism $\beta\colon \mathsf{F}(x,y)\to B$ such that $\beta(x)=b$ and $\beta(y)=s'g'(b)$ induce a unique morphism $\delta\colon P\to P'$ such that the cube
\begin{equation*}
	\xymatrix@C=2.5pc{
		P \ar@<-2pt>[dd]_-{p_1} \ar@<2pt>[rr]^-{p_2} \ar@{.>}@<-4pt>[rrrd]^-{\delta} && {\mathsf{F}(x,y)} \ar@<2pt>[ll]^-{e_2} \ar@<2pt>[dd]^(.6){f}|(.39){\hole} \ar@<5pt>[rrrd]^-{\beta} \\
		&&& P' \ar@<-2pt>[dd]_(.4){p'_1} \ar@<2pt>[rr]^(.4){p'_2} && B \ar@<2pt>[ll]^-{e_2'} \ar@<2pt>[dd]^-{g'} \\
		{\mathsf{F}(x,y)} \ar@<-2pt>[rr]_(.6){f} \ar@<-2pt>[uu]_-{e_1} \ar@<-2pt>[rrrd]_-{\alpha} && {\mathsf{F}(x)} \ar@<2pt>[uu]^-{s}|(.63){\hole} \ar@<-2pt>[ll]_-{r} \ar@<5pt>[rrrd]|(.38){\hole}|(.42){\hole}^-{\gamma} \\
		&&& A \ar@<-2pt>[uu]_(.6){e_1'} \ar@<-2pt>[rr]_-{f'} && C \ar@<-2pt>[ll]_(.55){r'} \ar@<2pt>[uu]^-{s'}
	}
\end{equation*}
`reasonably' commutes, where $\gamma\colon \mathsf{F}(x)\to C$ is the unique morphism such that $\gamma(x)=f'(a)=g'(b)$. Hence we get the commutative diagram
\begin{equation}\label{diagr:pullbackofsplitepis}
	\begin{tikzcd}
		{\mathsf{F}(x,y)+\mathsf{F}(x,y)} && P \\
		\\
		{A+B} && {P'.}
		\arrow["{\alpha+\beta}"', from=1-1, to=3-1]
		\arrow["{[e_1,e_2]}", from=1-1, to=1-3]
		\arrow["{\delta}", from=1-3, to=3-3]
		\arrow["{[e_1',e_2']}"', from=3-1, to=3-3]
	\end{tikzcd}
\end{equation}
By assumption, there exists $X\in \mathsf{F}(x,y)+\mathsf{F}(x,y)$ with $[e_1,e_2](X)=(y,x)$. Thus,
\begin{equation*}
	(a,b)=\delta(y,x)=\delta[e_1,e_2](X)=[e_1',e_2'](\alpha+\beta)(X)\in\Image([e_1',e_2']),
\end{equation*}
which proves that $[e_1',e_2']$ is surjective since $(a,b)$ was arbitrary in~$P'$.
\end{proof}

\begin{lemma}\label{lem:imageif and only ifp}
The image of $[e_1,e_2]\colon \mathsf{F}(x,y)+\mathsf{F}(x,y)\to P$ contains $(y,x)$ if and only if there exists a ternary term $p\in\mathsf{F}(x,y,z)$ such that $p(x,x,y)=y$ and $p(x,y,y)=x$.
\end{lemma}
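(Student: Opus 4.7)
The plan is to compute explicitly the actions of $e_1$ and $e_2$ on the two generators of $\mathsf{F}(x,y)$, exhibit the image of $[e_1,e_2]$ as a set of pairs of terms indexed by $4$-ary terms, and then translate the condition $(y,x)\in\Image([e_1,e_2])$ into identities of $\mathbb{V}$ from which a Mal'tsev term can be extracted by a single variable substitution.

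First I would unwind the definitions. Since $f$ collapses both generators to $x$, and $s(x)=y$ while $r(x)=x$, the composites satisfy $sf(t(x,y))=t(y,y)$ and $rf(t(x,y))=t(x,x)$. Thus $e_1(t)=(t(x,y),t(y,y))$ and $e_2(t)=(t(x,x),t(x,y))$; evaluating at the two generators yields
\begin{equation*}
	e_1(x)=(x,y),\quad e_1(y)=(y,y),\quad e_2(x)=(x,x),\quad e_2(y)=(x,y).
\end{equation*}
By the description of coproducts recalled in Section~\ref{sect:Coproductoftwoalgebras}, every element of $\mathsf{F}(x,y)+\mathsf{F}(x,y)$ is of the form $\tau(\iota_1(x),\iota_1(y),\iota_2(x),\iota_2(y))$ for some $4$-ary term $\tau$; since $[e_1,e_2]$ is a homomorphism and $P\subseteq\mathsf{F}(x,y)\times\mathsf{F}(x,y)$ inherits its operations componentwise, its image equals $\bigl(\tau(x,y,x,x),\,\tau(y,y,x,y)\bigr)$.

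For the ``if'' direction, given a Mal'tsev term $p$ satisfying $p(x,x,y)=y$ and $p(x,y,y)=x$, I would apply $p^P$ to the triple $(e_1(y),e_1(x),e_2(x))$ and compute componentwise to get $(p(y,x,x),p(y,y,x))$; after the variable renaming $x\leftrightarrow y$ the two Mal'tsev identities give $p(y,x,x)=y$ and $p(y,y,x)=x$, so $(y,x)\in\Image([e_1,e_2])$.

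For the converse, $(y,x)\in\Image([e_1,e_2])$ produces, via the observation above, a $4$-ary term $\tau$ with $\tau(x,y,x,x)=y$ and $\tau(y,y,x,y)=x$ holding in the free algebra $\mathsf{F}(x,y)$, hence as identities in~$\mathbb{V}$. I would then define the ternary term
\begin{equation*}
	p(u,v,w):=\tau(v,w,u,v)
\end{equation*}
and check directly that $p(x,x,y)=\tau(x,y,x,x)=y$ and $p(x,y,y)=\tau(y,y,x,y)=x$ using the two identities above. The only real obstacle is guessing this particular packaging of $\tau$ as a ternary term so that the two hypothesis patterns line up with the two Mal'tsev identities; once the substitution $(v,w,u,v)$ is in hand, every remaining verification is a direct calculation.
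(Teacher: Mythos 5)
Your proposal is correct and follows essentially the same route as the paper: compute $e_1$ and $e_2$ on generators, describe $\Image([e_1,e_2])$ as a family of pairs indexed by terms, and repackage the witnessing term into a Mal'tsev term by substitution (your $p(u,v,w)=\tau(v,w,u,v)$ is exactly the paper's $p(x,y,z)=\tau(s_1(y,z),\ldots,t_\ell(x,y))$ specialized to the generators). The only difference is cosmetic: you exploit that $\mathsf{F}(x,y)+\mathsf{F}(x,y)$ is free on four generators to work with a single $4$-ary term, which slightly streamlines the ``only if'' direction compared with the paper's use of the general coproduct description.
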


\begin{proof}
We show that
\begin{equation}\label{eq:im([e_1,e_2])}
	\Image([e_1,e_2])=\{(p(x,x,y),p(x,y,y))\mid p\in \mathsf{F}(x,y,z)\},
\end{equation}
which implies the claim.
	
"$\supseteq$": We know that $(x,x)=e_2(x)$, $(x,y)=e_1(x)=e_2(y)$ and $(y,y)=e_1(y)$ are in $\Image([e_1,e_2])$. Hence, given a ternary term $p\in \mathsf{F}(x,y,z)$,
\begin{equation*}
	(p(x,x,y),p(x,y,y)) = p((x,x),(x,y),(y,y))
\end{equation*}
lies also in the subalgebra $\Image([e_1,e_2])\subseteq P$.
	
"$\subseteq$": By Equation~\eqref{eq:coproduct}, we know that we can write an arbitrary element of $\mathsf{F}(x,y)+\mathsf{F}(x,y)$ as
\begin{equation*}
	\tau(\iota_1(s_1),\ldots,\iota_1(s_k),\iota_2(t_1),\ldots,\iota_2(t_{\ell}))
\end{equation*}
where $k,\ell\geqslant 0$ are integers, $s_1,\ldots,s_k ,t_1,\ldots,t_{\ell}\in \mathsf{F}(x,y)$, $\iota_1,\iota_2\colon \mathsf{F}(x,y)\to \mathsf{F}(x,y)+\mathsf{F}(x,y)$ are the coproduct inclusions of $\mathsf{F}(x,y)+\mathsf{F}(x,y)$ and $\tau$ is a $(k+\ell)$-ary term. We compute
{\allowdisplaybreaks
\begin{align*}
	&[e_1,e_2]\Bigl(\tau\bigl(\iota_1(s_1),\ldots,\iota_1(s_k),\iota_2(t_1),\ldots,\iota_2(t_{\ell})\bigr)\Bigr)\\*
	&=\tau\Bigl([e_1,e_2]\bigl(\iota_1(s_1)\bigr),\ldots,[e_1,e_2]\bigl(\iota_1(s_k)\bigr),[e_1,e_2]\bigl(\iota_2(t_1)\bigr),\ldots,[e_1,e_2]\bigl(\iota_2(t_{\ell})\bigr)\Bigr)\\	
	&=\tau\bigl(e_1(s_1),\ldots,e_1(s_k),e_2(t_1),\ldots,e_2(t_{\ell})\bigr)\\
	&=\tau\Bigl(s_1\bigl(e_1(x),e_1(y)\bigr),\ldots,s_k\bigl(e_1(x),e_1(y)\bigr),t_1(e_2(x),e_2(y)),\ldots,t_{\ell}(e_2(x),e_2(y))\Bigr)\\
	&=\tau\Bigl(s_1\bigl((x,y),(y,y)\bigr),\ldots,s_k\bigl((x,y),(y,y)\bigr),t_1\bigl((x,x),(x,y)\bigr),\ldots,t_{\ell}\bigl((x,x),(x,y)\bigr)\Bigr)\\
	&=\tau\Bigl(\bigl(s_1(x,y),s_1(y,y)\bigr),\ldots,\bigl(s_k(x,y),s_k(y,y)\bigr),\bigl(t_1(x,x),t_1(x,y)\bigr),\ldots,\bigl(t_{\ell}(x,x),t_{\ell}(x,y)\bigr)\Bigr)\\
	&=\Bigl(\tau\bigl(s_1(x,y),\ldots,s_k(x,y),t_1(x,x),\ldots,t_{\ell}(x,x)\bigr),\\
	&\hspace{226pt}\tau\bigl(s_1(y,y),\ldots,s_k(y,y),t_1(x,y),\ldots,t_{\ell}(x,y)\bigr)\Bigr)\\*
	&=\bigl(p(x,x,y),p(x,y,y)\bigr),
\end{align*}}%
where we set $p(x,y,z):=\tau(s_1(y,z),\ldots,s_k(y,z),t_1(x,y),\ldots,t_\ell(x,y))$. Hence the claim is proven.
\end{proof}

If the variety $\mathbb{V}$ is a weakly Mal'tsev category, the morphism $[e_1,e_2]\colon \mathsf{F}(x,y)+\mathsf{F}(x,y)\to P$ is only an epimorphism. Let us consider its cokernel pair
\begin{equation*}
	\begin{tikzcd}
		{\mathsf{F}(x,y)+\mathsf{F}(x,y)} && P && Q.
		\arrow["{[e_1,e_2]}", from=1-1, to=1-3]
		\arrow["{q_1}", shift left=1, from=1-3, to=1-5]
		\arrow["{q_2}"', shift right=1, from=1-3, to=1-5]
	\end{tikzcd}
\end{equation*}

\begin{lemma}\label{lem:WMif and only ifprojection}
The variety $\mathbb{V}$ is a weakly Mal'tsev category if and only if $q_1(y,x)=q_2(y,x)$. 
\end{lemma}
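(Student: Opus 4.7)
The strategy mirrors the proof of Lemma~\ref{lem:Mal'tsevif and only ifimage}, replacing `$(y,x)$ lies in the image' by `$(y,x)$ is identified by the cokernel pair', and using the universal property of the cokernel pair in place of surjectivity.

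For the implication ``$\Rightarrow$'', if $\mathbb{V}$ is weakly Mal'tsev then the inclusions $e_1,e_2$ in the pullback diagram defining $P$ are jointly epimorphic, so $[e_1,e_2]\colon \mathsf{F}(x,y)+\mathsf{F}(x,y)\to P$ is an epimorphism. Since $q_1[e_1,e_2]=q_2[e_1,e_2]$ by definition of the cokernel pair, we conclude $q_1=q_2$, and in particular $q_1(y,x)=q_2(y,x)$.

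For the implication ``$\Leftarrow$'', I would consider an arbitrary pullback of split epimorphisms as in Diagram~\eqref{diagr:general pullback split epis} together with morphisms $u,v\colon P'\to A'$ (for some object $A'$) satisfying $ue_1'=ve_1'$ and $ue_2'=ve_2'$; the goal is to prove $u=v$, i.e., $u(a,b)=v(a,b)$ for every $(a,b)\in P'$. Fix such an $(a,b)$. Exactly as in the proof of Lemma~\ref{lem:Mal'tsevif and only ifimage}, define $\alpha\colon \mathsf{F}(x,y)\to A$ by $\alpha(x)=r'f'(a)$, $\alpha(y)=a$, and $\beta\colon \mathsf{F}(x,y)\to B$ by $\beta(x)=b$, $\beta(y)=s'g'(b)$; these induce a morphism $\delta\colon P\to P'$ making Diagram~\eqref{diagr:pullbackofsplitepis} commute, and a direct computation of $p_1'\delta(y,x)$ and $p_2'\delta(y,x)$ gives $\delta(y,x)=(a,b)$.

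Now, the joint hypotheses on $u,v$ yield $u[e_1',e_2']=v[e_1',e_2']$, hence, precomposing with $\alpha+\beta$ and using the commutativity of Diagram~\eqref{diagr:pullbackofsplitepis}, we obtain
\begin{equation*}
u\delta[e_1,e_2]=v\delta[e_1,e_2].
\end{equation*}
By the universal property of the cokernel pair $(q_1,q_2)$ of $[e_1,e_2]$, there exists a (unique) morphism $h\colon Q\to A'$ such that $hq_1=u\delta$ and $hq_2=v\delta$. Applying $h$ to the assumed equality $q_1(y,x)=q_2(y,x)$ gives $u\delta(y,x)=v\delta(y,x)$, that is, $u(a,b)=v(a,b)$. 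Since $(a,b)\in P'$ was arbitrary, $u=v$ follows, so $e_1',e_2'$ are jointly epimorphic and $\mathbb{V}$ is weakly Mal'tsev.

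The only mildly delicate point is the construction of $\delta$ and the verification $\delta(y,x)=(a,b)$, but this is exactly the argument already carried out in Lemma~\ref{lem:Mal'tsevif and only ifimage} and poses no new obstacle; the rest is a routine application of the universal property of the cokernel pair.
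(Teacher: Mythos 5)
Your proof is correct and follows essentially the same route as the paper: the same construction of $\alpha$, $\beta$, $\delta$ with $\delta(y,x)=(a,b)$, and the same exploitation of the cokernel pair of $[e_1,e_2]$. The only (immaterial) difference is that you test joint epimorphicity of $e_1',e_2'$ against arbitrary morphisms $u,v$ and invoke the universal property of $(Q,q_1,q_2)$ to produce $h$, whereas the paper tests against the universal such pair, namely the cokernel pair $(Q',q_1',q_2')$ of $[e_1',e_2']$, and uses the induced map $\rho\colon Q\to Q'$ to conclude $q_1'=q_2'$.
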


\begin{proof}
"$\Rightarrow$": If $\mathbb{V}$ is a weakly Mal'tsev category, the arrow $[e_1,e_2]$ is an epimorphism. Hence $q_1=q_2$.
	
"$\Leftarrow$": As in the proof of Lemma~\ref{lem:Mal'tsevif and only ifimage}, we consider a general pullback of split epimorphisms in~$\mathbb{V}$ as in Diagram~\eqref{diagr:general pullback split epis}. We show that $[e_1',e_2']$ is an epimorphism. Let $(a,b)\in P'$ be arbitrary. We consider the extended version
\begin{equation*}
	\begin{tikzcd}
		{\mathsf{F}(x,y)+\mathsf{F}(x,y)} && P && Q \\
		\\
		{A+B} && {P'} && {Q'}
		\arrow["{\alpha+\beta}"', from=1-1, to=3-1]
		\arrow["{[e_1,e_2]}", from=1-1, to=1-3]
		\arrow["{\delta}", from=1-3, to=3-3]
		\arrow["{[e_1',e_2']}"', from=3-1, to=3-3]
		\arrow["{q_1}", shift left=1, from=1-3, to=1-5]
		\arrow["{q_1'}", shift left=1, from=3-3, to=3-5]
		\arrow["{\rho}", dotted, from=1-5, to=3-5]
		\arrow["{q_2}"', shift right=1, from=1-3, to=1-5]
		\arrow["{q_2'}"', shift right=1, from=3-3, to=3-5]
	\end{tikzcd}
\end{equation*}
of Diagram~\eqref{diagr:pullbackofsplitepis}, where $(Q',q_1',q_2')$ is the cokernel pair of $[e_1',e_2']$ and $\rho\colon Q\to Q'$ is the unique morphism such that $\rho q_i=q'_i \delta$ for each $i\in\{1,2\}$. We have that
\begin{equation*}
	\begin{aligned}
		q_1'(a,b)
		=q_1'\delta(y,x)
		=\rho q_1(y,x)
		=\rho q_2(y,x)
		=q_2'\delta(y,x)
		=q_2'(a,b).
	\end{aligned}
\end{equation*}
Hence $q_1'=q_2'$ and the claim is proven.
\end{proof}

We are now ready to state and prove our main theorem.

\begin{theorem}\label{thm:syntaxWM}
A finitary one-sorted variety $\mathbb{V}$ of universal algebras is a weakly Mal'tsev category if and only if there exist integers $k,m,N\geqslant 0$, binary terms $f_1,g_1,\ldots,f_k,g_k\in \mathsf{F}(x,y)$, ternary terms $p_1,\ldots,p_m\in\mathsf{F}(x,y,z)$, $(2(k+2m+1))$-ary terms $s_1,\ldots,s_N$, ${(2(k+m+2))}$-ary terms $\sigma_1,\ldots,\sigma_{N+1}$ and, for all $i\in\{1,\ldots,N+1\}$, $(k+m+1)$-ary terms $\eta^{(i)}_1,\eta^{(i)}_2,\epsilon^{(i)}_1,\epsilon^{(i)}_2$ such that the following identities (on variables $x$, $y$, $u$, $u'$, $v_1,\dots,v_k$, $v'_1,\dots,v'_k$, $w_1,\dots,w_m$ and $w'_1,\dots,w'_m$) hold, where we write $\vec{v}$ for $v_1,\ldots,v_k$ and $\vec{w}$ for $w_1,\ldots,w_m$, and analogously for $\vec{v}'$ and~$\vec{w}'$:
\begin{align}\label{eq:(f_i,g_i)inP}
	f_i(x,x)=g_i(x,x)
\end{align} 
for all $i\in\{1,\ldots,k\}$;
{\allowdisplaybreaks
\begin{subequations}\label{subeq:WMeta}
	\begin{alignat}{2}
		&\eta^{(i)}_\alpha(y,f_1(x,y),\ldots,f_k(x,y),p_1(x,x,y),\ldots,p_m(x,x,y))\nonumber \\*
		=&\epsilon^{(i)}_\alpha(y,f_1(x,y),\ldots,f_k(x,y),p_1(x,x,y),\ldots,p_m(x,x,y)),\label{eq:WMetay}\\[3ex]
		&\eta^{(i)}_\alpha(x,g_1(x,y),\ldots,g_k(x,y),p_1(x,y,y),\ldots,p_m(x,y,y))\nonumber \\*
		=&\epsilon^{(i)}_\alpha(x,g_1(x,y),\ldots,g_k(x,y),p_1(x,y,y),\ldots,p_m(x,y,y)),\label{eq:WMetax}
	\end{alignat}
\end{subequations}}%
for all $i\in\{1,\ldots,N+1\}$ and $\alpha\in\{1,2\}$;
{\allowdisplaybreaks
\begin{subequations}
	\begin{alignat}{2}
		&\sigma_{i}(u,\vec{v},\vec{w},u',\vec{v}',\vec{w}',\epsilon^{(i)}_1(u,\vec{v},\vec{w}),\epsilon^{(i)}_2(u',\vec{v}',\vec{w}'))\nonumber\\*
		=&s_{i}(u,\vec{v},\vec{w},\vec{w},u',\vec{v}',\vec{w}',\vec{w}'),\label{eq:WModd}\\[3ex]
		&s_{i}(u,\vec{v},\vec{w},\vec{w}',u',\vec{v}',\vec{w}',\vec{w})\nonumber \\*
		=&\sigma_{i+1}(u,\vec{v},\vec{w},u',\vec{v}',\vec{w}',{\eta}^{(i+1)}_1(u,\vec{v},\vec{w}),{\eta}^{(i+1)}_2(u',\vec{v}',\vec{w}')),\label{eq:WMeven}
	\end{alignat}
\end{subequations}}%
for all $i\in\{1,\ldots,N\}$ and
\begin{subequations}\label{subeq:WMuu'}
	\begin{alignat}{2}
		u
		&=\sigma_1(u,\vec{v},\vec{w},u',\vec{v}',\vec{w}',\eta^{(1)}_1(u,\vec{v},\vec{w}),\eta^{(1)}_2(u',\vec{v}',\vec{w}')),\label{eq:WMu}\\
		u'
		&=\sigma_{N+1}(u,\vec{v},\vec{w},u',\vec{v}',\vec{w}',\epsilon^{(N+1)}_1(u,\vec{v},\vec{w}),\epsilon^{(N+1)}_2(u',\vec{v}',\vec{w}'))\label{eq:WMu'}.
	\end{alignat}
\end{subequations}
\end{theorem}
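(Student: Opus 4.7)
By Lemma~\ref{lem:WMif and only ifprojection}, the variety $\mathbb{V}$ is weakly Mal'tsev if and only if $q_1(y,x)=q_2(y,x)$ in~$Q$, and the strategy is to decode this single equation into the family of term identities displayed in the statement. The first step is to make $Q$ completely explicit. Unfolding the coequalizer, $Q$ is the quotient of $P+P$ by the congruence generated by the pairs $(\iota_1(a,b),\iota_2(a,b))$ with $(a,b)\in\Image([e_1,e_2])$; by the computation carried out inside the proof of Lemma~\ref{lem:imageif and only ifp}, this image is precisely $\{(p(x,x,y),p(x,y,y))\mid p\in\mathsf{F}(x,y,z)\}$. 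Meanwhile, by Proposition~\ref{prop:coproduct}, $P+P=\mathsf{F}(\mathsf{U}P+\mathsf{U}P)/C'$, where $C'$ is the transitive closure of the symmetric relation~$\sim$. Consequently, $q_1(y,x)=q_2(y,x)$ is equivalent to the existence of a finite zigzag inside $\mathsf{F}(\mathsf{U}P+\mathsf{U}P)$ from the generator $\iota_1(y,x)$ to the generator $\iota_2(y,x)$ composed of two kinds of elementary moves: \emph{$\sim$-moves}, coming from the congruence presenting the coproduct $P+P$, and \emph{swap-moves} of the form $\iota_1(p(x,x,y),p(x,y,y))\leftrightarrow\iota_2(p(x,x,y),p(x,y,y))$ for arbitrary ternary terms~$p$.

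The second step is to normalize such a zigzag and translate it into terms. Using the transitivity of both the $\sim$-closure and of the congruence generated on $P+P$, any zigzag can be rearranged so that $\sim$-moves and swap-moves strictly alternate, producing an integer $N\geqslant 0$ and a sequence of intermediate elements witnessing $\iota_1(y,x)\leftrightarrow\iota_2(y,x)$ through $N+1$ $\sim$-moves separated by $N$ swap-moves. One then collects all the $P$-parameters appearing anywhere in the zigzag and splits them into the ones acted upon by swap-moves, which must be of the form $(p_j(x,x,y),p_j(x,y,y))$ and thus yield the ternary terms $p_1,\ldots,p_m$, and the remaining ``passive'' pairs $(f_i(x,y),g_i(x,y))\in P$, which provide the binary terms $f_1,g_1,\ldots,f_k,g_k$ together with identity~\eqref{eq:(f_i,g_i)inP}. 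After padding every step of the zigzag to operate on the common parameter tuple $(u,\vec v,\vec w)$ in the first copy (interpreted as $\iota_1(y,x)$, $\iota_1(f_i,g_i)$, $\iota_1(p_j(x,x,y),p_j(x,y,y))$) and $(u',\vec v',\vec w')$ in the second copy, the outer term $\tau$ of the $i$-th $\sim$-move becomes $\sigma_i$, while its inner terms $\mu_\alpha,\lambda_\alpha$ become $\eta^{(i)}_\alpha,\epsilon^{(i)}_\alpha$; the $P$-level requirements $\mu^P_\alpha=\lambda^P_\alpha$ decompose componentwise into~\eqref{eq:WMetay} and~\eqref{eq:WMetax}. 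Each swap-move becomes an $s_i$ whose argument shape records exactly which $w_j$-generators change copy, giving~\eqref{eq:WModd} and~\eqref{eq:WMeven} when glued to the adjacent $\sim$-moves, and the two endpoints of the zigzag produce the boundary identities~\eqref{eq:WMu} and~\eqref{eq:WMu'}. The converse direction reads the same dictionary backwards: substituting $u:=y$, $u':=x$, $v_i:=f_i(x,y)$, $v'_i:=g_i(x,y)$, $w_j:=p_j(x,x,y)$, $w'_j:=p_j(x,y,y)$, the identities of the theorem produce an explicit alternating zigzag in $\mathsf{F}(\mathsf{U}P+\mathsf{U}P)$ between $\iota_1(y,x)$ and $\iota_2(y,x)$.

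The main obstacle is the bookkeeping in the forward direction. One must check first that an arbitrary zigzag can be brought into the prescribed alternating form and uniformly padded to use the common parameter tuples without losing any $P$-level equality, and second that the outer and inner terms appearing in each $\sim$-step can indeed be presented as honest terms of the theory, that is, as expressions in universally bound variables $u,\vec v,\vec w,u',\vec v',\vec w'$ rather than in the specific generators of $P$. This is possible because the elements of $P$ are themselves pairs of binary terms and the $\sim$-relation operates on them through further terms, so that threading variable names consistently is all that is required. Once the normal form and the term encoding are in place, the translation between each elementary move and its matching equation among~\eqref{subeq:WMeta}--\eqref{subeq:WMuu'} is formal, and the integers $k,m,N$ emerge respectively as the number of passive $P$-parameters, the number of swap-eligible $P$-parameters, and the number of swap-moves in the zigzag.
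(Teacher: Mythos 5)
Your plan follows the same route as the paper's proof: reduce to $q_1(y,x)=q_2(y,x)$ via Lemma~\ref{lem:WMif and only ifprojection}, present $Q$ as a quotient of $\mathsf{F}(\mathsf{U}P+\mathsf{U}P)$ by the congruence generated by the coproduct relation $\sim$ together with the swap pairs $\iota_1\bigl(p(x,x,y),p(x,y,y)\bigr)\leftrightarrow\iota_2\bigl(p(x,x,y),p(x,y,y)\bigr)$, normalize the resulting zigzag into an alternating form, and read off the terms; the converse is the same substitution-and-chaining argument. The identification of $\sigma_i,\eta^{(i)}_\alpha,\epsilon^{(i)}_\alpha$ with the outer and inner terms of the $\sim$-moves, of $s_i$ with the swap contexts, and of $k,m,N$ with the counts you describe all match the paper, as does the device of inserting trivial swap-moves to force strict alternation.

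There is, however, one concrete gap in the forward direction. Extracting an identity of the variety in the \emph{separate} variables $u,\vec v,\vec w,u',\vec v',\vec w'$ from an equality in the free algebra $\mathsf{F}(\mathsf{U}P+\mathsf{U}P)$ requires the corresponding generators, i.e.\ the elements $(y,x),(f_1,g_1),\dots,(f_k,g_k),(p_1(x,x,y),p_1(x,y,y)),\dots$ of $P$, to be pairwise distinct; otherwise you only obtain the identity after identifying the clashing variables. Duplicates among the $(f_i,g_i)$'s and the $(p_j(x,x,y),p_j(x,y,y))$'s can be merged away, but a coincidence between $(y,x)$ and some $(p_j(x,x,y),p_j(x,y,y))$ cannot: the format of the theorem forces $u$ to be a non-swapped variable while $w_j$ is swapped. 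Such a coincidence occurs exactly when $\mathbb{V}$ is a Mal'tsev variety, so your claim that ``threading variable names consistently is all that is required'' fails precisely there. The paper handles this by treating the Mal'tsev case separately at the outset, exhibiting explicit terms with $k=0$ and $m=N=1$; your plan needs the same (or an equivalent) patch. Separately, the substitution you give for the converse ($u:=y$, $u':=x$, $v_i:=f_i(x,y)$, $v'_i:=g_i(x,y)$, \dots) is not the right one --- it projects onto the two components of each pair --- whereas what is needed (and what you correctly state earlier in your own plan) is $u:=\iota_1(y,x)$, $u':=\iota_2(y,x)$, $v_i:=\iota_1(f_i,g_i)$, $v'_i:=\iota_2(f_i,g_i)$, and so on, equivalently substitution by the images under $q_1$ and $q_2$ in $Q$ as in the paper; with the substitution as literally written the identities collapse to statements in $\mathsf{F}(x,y)$ and do not produce the required zigzag.
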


Note that, in comparison to Equation~\eqref{eq:WModd}, the second occurrences of $\vec{w}$ and $\vec{w}'$ in the term $s_i$ are swapped in Equation~\eqref{eq:WMeven}.

\begin{proof}
We start by showing that if $\mathbb{V}$ is a weakly Mal'tsev category, then the terms mentioned in the statement exist. We first treat the case where $\mathbb{V}$ is a Mal'tsev category separately. If a Mal'tsev term $p\in\mathsf{F}(x,y,z)$, which fulfills $p(x,x,y)=y$ and $p(x,y,y)=x$, exists, we can easily construct terms that satisfy the requirements of the theorem. We can choose for instance $k$ to be $0$ and $m,N$ to be~$1$, and we can set
\begin{align*}
	p_1(x,y,z)&:= p(x,y,z),\\
	s_1(u,w,\tilde{w},u',w',\tilde{w}')&:= \tilde{w},\\
	\sigma_1(u,w,u',w',a,b)&:= a,\\
	\sigma_2(u,w,u',w',a,b)&:= b,\\
	\eta^{(1)}_1(u,w):= \epsilon^{(2)}_2(u,w)&:= u,\\
	\epsilon^{(1)}_1(u,w):= \eta^{(2)}_2(u,w)&:= w. 
\end{align*}
Then the binary terms $\eta^{(1)}_2,\epsilon^{(1)}_2$, $\eta^{(2)}_1,\epsilon^{(2)}_1$ have only to satisfy the Equations~\eqref{subeq:WMeta}, which can easily be attained.

Let us now suppose that $\mathbb{V}$ is a weakly Mal'tsev variety but not a Mal'tsev variety. By Lemma~\ref{lem:WMif and only ifprojection}, we know that $\mathbb{V}$ is weakly Mal'tsev if and only if $q_1(y,x)=q_2(y,x)$. We construct the cokernel pair $(q_1,q_2\colon P\to Q)$ of the map $[e_1,e_2]\colon \mathsf{F}(x,y)+\mathsf{F}(x,y)\to P$ by means of the coequalizer $q$ of the maps $\iota_1[e_1,e_2],\iota_2[e_1,e_2]$, where $\iota_1,\iota_2$ are the coproduct inclusions into $P+P$, as in the following diagram:
\begin{equation*}
	\xymatrix{\mathsf{F}(x,y)+\mathsf{F}(x,y) \ar[rr]^-{[e_1,e_2]} \ar[dd]_-{[e_1,e_2]} && P \ar[dd]^-{\iota_2}  \ar@/^1.2pc/[rddd]^-{q_2} \\
		\\
		P \ar[rr]_-{\iota_1} \ar@/_1.2pc/[rrrd]_{q_1} && {P+P} \ar[rd]^-{q} \\
		&&& Q}
\end{equation*}
The algebra $Q$ is given by the quotient of $P+P$ with respect to, due to Equation~\eqref{eq:im([e_1,e_2])}, the smallest congruence $C\subseteq(P+P)^2$ containing the subset
\begin{equation*}
	S:= \Bigl\{\Bigl(\iota_1\bigl(p(x,x,y),p(x,y,y)\bigr),\iota_2\bigl(p(x,x,y),p(x,y,y)\bigr)\Bigr)\in (P+P)^2\mid p\in\mathsf{F}(x,y,z)\Bigr\}.
\end{equation*}
More precisely, we first define the subset
\begin{align*}
	S_{\mathrm{S}}:= &\Bigl\{\Bigl(\iota_1\bigl(p(x,x,y),p(x,y,y)\bigr),\iota_2\bigl(p(x,x,y),p(x,y,y)\bigl)\Bigr)\in (P+P)^2\mid p\in\mathsf{F}(x,y,z)\Bigr\}\\
	 \cup &\Bigl\{\Bigl(\iota_2\bigl(p(x,x,y),p(x,y,y)\bigr),\iota_1\bigl(p(x,x,y),p(x,y,y)\bigr)\Bigr)\in (P+P)^2\mid p\in\mathsf{F}(x,y,z)\Bigr\},
\end{align*}
which yields the smallest symmetric relation on $P+P$ containing~$S$. Secondly, we define
\begin{equation*}
	S_{\mathrm{SR}}:= S_{\mathrm{S}}\cup \{(X,X)\in(P+P)^2\mid X\in P+P \},
\end{equation*}
which yields the smallest reflexive symmetric relation on $P+P$ containing~$S$. Thirdly, we define
\begin{align*}
	S_{\mathrm{SRO}}:= \{t((X_1,Y_1),\ldots,(X_n,Y_n))\in (P+P)^2 \mid\, & n\geqslant 0 \text{ is an integer,}\\
	&(X_1,Y_1),\ldots,(X_n,Y_n)\in S_{\mathrm{SR}}\\
	&\text{and $t$ is an $n$-ary term}\},
\end{align*}
yielding the smallest symmetric reflexive relation on $P+P$ containing $S$ and being closed under operations. Fourthly, we define
\begin{align*}
	S_{\mathrm{SROT}}:= \{(X,Y)\in (P+P)^2\mid&\text{ there exists an integer $n\geqslant 1$ and $X_1,\ldots,X_{n+1}\in P+P$}\\ 
			&\text{ such that } (X_1,X_2),(X_2,X_3),\ldots,(X_n,X_{n+1})\in S_{\mathrm{SRO}}\\
			& \text{ and $X=X_1$ and $X_{n+1}=Y$}\}
\end{align*}
to be the transitive closure of~$S_{\mathrm{SRO}}$. It is well-known and easy to prove that $C$ coincides with~$S_{\mathrm{SROT}}$. Thus, the condition $q_1(y,x)=q_2(y,x)$ is equivalent to $\bigl(\iota_1(y,x),\iota_2(y,x)\bigr)\in S_{\mathrm{SROT}}$. This means that there exist an integer $n\geqslant 1$ and elements $X_1,\ldots,X_{n+1}\in P+P$ such that
\begin{equation*}
	(X_1,X_2),(X_2,X_3),\ldots,(X_{n-1},X_n),(X_n,X_{n+1})\in S_{\mathrm{SRO}},
\end{equation*}
$\iota_1(y,x)=X_1$ and $X_{n+1}=\iota_2(y,x)$. For $i\in\{1,\ldots,n\}$, the condition $(X_i,X_{i+1})\in S_{\mathrm{SRO}}$ means that there exist an integer $a_i\geqslant 0$, elements $(X^{(i)}_1,Y^{(i)}_1),\ldots,(X^{(i)}_{a_i},Y^{(i)}_{a_i})\in S_{\mathrm{SR}}$ and an $a_i$-ary term $t_i$ such that
\begin{align*}
	(X_i,X_{i+1})=(t_i(X^{(i)}_1,\ldots,X^{(i)}_{a_i}),t_i(Y^{(i)}_1,\ldots,Y^{(i)}_{a_i})).
\end{align*}
By definition of~$S_{\mathrm{SR}}$, each pair $(X^{(i)}_j,Y^{(i)}_j)$ is either of the form
\begin{equation*}
	\Bigl(\iota_1\bigl(p^{(i)}_j(x,x,y),p^{(i)}_j(x,y,y)\bigr),\iota_2\bigl(p^{(i)}_j(x,x,y),p^{(i)}_j(x,y,y)\bigr)\Bigr),
\end{equation*}
where $p^{(i)}_j\in \mathsf{F}(x,y,z)$, of the form
\begin{equation*}
	\Bigl(\iota_2\bigl(q^{(i)}_j(x,x,y),q^{(i)}_j(x,y,y)\bigr),\iota_1\bigl(q^{(i)}_j(x,x,y),q^{(i)}_j(x,y,y)\bigl)\Bigr),
\end{equation*}
where $q^{(i)}_j\in \mathsf{F}(x,y,z)$, or of the form
\begin{equation*}
	(Z^{(i)}_{j},Z^{(i)}_j),
\end{equation*}
where $Z^{(i)}_j=X^{(i)}_j=Y^{(i)}_j\in P+P$. Hence we see that, by possibly permuting the variables in the term~$t_i$, the condition $(X_i,X_{i+1})\in S_{\mathrm{SRO}}$ is equivalent to the existence of integers $m_i, m'_i, m''_i\geqslant 0$, ternary terms $p^{(i)}_1,\ldots,p^{(i)}_{m_i}$ and $q^{(i)}_1,\ldots,q^{(i)}_{m'_i}$, elements $Z^{(i)}_1,\ldots,Z^{(i)}_{m''_i}\in P+P$ and an $(m_i+m'_i+m''_i)$-ary term $t_i$ such that the identities
\begin{align*}
	X_i = t_i\Bigl(&Z^{(i)}_1,\ldots,Z^{(i)}_{m''_i},\\
	&\iota_1\bigl(p^{(i)}_1(x,x,y),p^{(i)}_1(x,y,y)\bigr),\ldots,\iota_1\bigl(p^{(i)}_{m_i}(x,x,y),p^{(i)}_{m_i}(x,y,y)\bigr),\\
	&\iota_2\bigl(q^{(i)}_1(x,x,y),q^{(i)}_1(x,y,y)\bigr),\ldots,\iota_2\bigl(q^{(i)}_{m'_i}(x,x,y),q^{(i)}_{m'_i}(x,y,y)\bigr) \Bigr)
\end{align*}
and
\begin{align*}
	X_{i+1} = t_i\Bigl(&Z^{(i)}_1,\ldots,Z^{(i)}_{m''_i},\\
	&\iota_2\bigl(p^{(i)}_1(x,x,y),p^{(i)}_1(x,y,y)\bigr),\ldots,\iota_2\bigl(p^{(i)}_{m_i}(x,x,y),p^{(i)}_{m_i}(x,y,y)\bigr),\\
	&\iota_1\bigl(q^{(i)}_1(x,x,y),q^{(i)}_1(x,y,y)\bigr),\ldots,\iota_1\bigl(q^{(i)}_{m'_i}(x,x,y),q^{(i)}_{m'_i}(x,y,y)\bigr) \Bigr)
\end{align*}
hold in $P+P$. Given a term $t\in\mathsf{F}(x_1,\ldots,x_v)$, where $v\geqslant 0$ is an integer, we can add a variable $x_{v+1}$ to $t$ in the sense that we define a term $t'\in\mathsf{F}(x_1,\ldots,x_{v+1})$ by setting $t'(x_1,\ldots,x_{v+1}):= t(x_1,\ldots,x_v)$. In this spirit, we can consider the unions
\begin{equation*}
	\bigcup_{i\in\{1,\ldots,n\}}\{p^{(i)}_j\mid j\in\{1,\ldots,m_i\}\},
\end{equation*}
\begin{equation*}
	\bigcup_{i\in\{1,\ldots,n\}}\{q^{(i)}_j\mid j\in\{1,\ldots,m'_i\}\}
\end{equation*}
and
\begin{equation*}
	\bigcup_{i\in\{1,\ldots,n\}}\{Z^{(i)}_j\mid j\in\{1,\ldots,m''_i\}\},
\end{equation*}
and assume, without loss of generality, that $m_1=\cdots =m_n=: m$, $m'_1=\cdots =m'_n=: m'$ and $m''_1=\cdots =m''_n=: m''$. Hence we can omit the upper index from the elements $p^{(i)}_j,q^{(i)}_j,Z^{(i)}_j$. Furthermore, by looking at the union
\begin{equation*}
	\{p_j\mid j\in\{1,\ldots,m\}\} \cup \{q_j\mid j\in \{1,\ldots,m'\}\},
\end{equation*}
we can assume, without loss of generality, that $m=m'$ and $p_j=q_j$ for all $j\in\{1,\ldots,m\}$. Thus, the condition that $(X_i,X_{i+1})\in S_{\mathrm{SRO}}$ for all $i\in\{1,\ldots,n\}$ is equivalent to the existence of integers $m,m''\geqslant 0$, terms $p_1,\ldots,p_m\in\mathsf{F}(x,y,z)$, elements $Z_1,\ldots,Z_{m''}\in P+P$ and $(2m+m'')$-ary terms $t_1,\ldots,t_n$ such that the identities
\begin{align}\label{eq:Xi=ti}
	X_i = t_i \Bigl(&Z_1,\ldots,Z_{m''},\\
	& \iota_1\bigl(p_1(x,x,y),p_1(x,y,y)\bigr),\ldots,\iota_1\bigl(p_m(x,x,y),p_m(x,y,y)\bigr)\nonumber\\
	& \iota_2\bigl(p_1(x,x,y),p_1(x,y,y)\bigr),\ldots,\iota_2\bigl(p_m(x,x,y),p_m(x,y,y)\bigr)\Bigr)\nonumber
\end{align}
and
\begin{align}\label{eq:Xi+1=ti}
	X_{i+1} = t_i \Bigl(&Z_1,\ldots,Z_{m''},\\
	& \iota_2\bigl(p_1(x,x,y),p_1(x,y,y)\bigr),\ldots,\iota_2\bigl(p_m(x,x,y),p_m(x,y,y)\bigr)\nonumber\\
	& \iota_1\bigl(p_1(x,x,y),p_1(x,y,y)\bigr),\ldots,\iota_1\bigl(p_m(x,x,y),p_m(x,y,y)\bigr)\Bigr)\nonumber
\end{align}
hold in $P+P$ for all $i\in\{1,\ldots,n\}$. By Equation~\eqref{eq:coproduct}, there exist, for all $i\in\{1,\ldots,m''\}$, integers $\hat{k}_i,\tilde{k}_i\geqslant 0$, elements $(\hat{f}^{(i)}_1,\hat{g}^{(i)}_1),\ldots,(\hat{f}^{(i)}_{\hat{k}_i},\hat{g}^{(i)}_{\hat{k}_i})$ and $(\tilde{f}^{({i})}_1,\tilde{g}^{(i)}_1),\ldots,(\tilde{f}^{({i})}_{\tilde{k}_i},\tilde{g}^{(i)}_{\tilde{k}_i})$ in~$P$, and a $(\hat{k}_i+\tilde{k}_i)$-ary term $s_i$ such that
\begin{equation*}
	Z_i=s_i\bigl(\iota_1(\hat{f}^{(i)}_1,\hat{g}^{(i)}_1),\ldots,\iota_1(\hat{f}^{(i)}_{\hat{k}_i},\hat{g}^{(i)}_{\hat{k}_i}),\iota_2(\tilde{f}^{(i)}_1,\tilde{g}^{(i)}_1),\ldots,\iota_2(\tilde{f}^{(i)}_{\tilde{k}_i},\tilde{g}^{(i)}_{\tilde{k}_i})\bigr).
\end{equation*}
Proceeding analogously as we did for the $p^{(i)}_1,\ldots,p^{(i)}_{m_i}$ and $q^{(i)}_1,\ldots,q^{(i)}_{m'_i}$ from above, we can first assume, without loss of generality, that $\hat{k}_1=\cdots=\hat{k}_{m''}=: \hat{k}$ and $\tilde{k}_1=\cdots=\tilde{k}_{m''}=: \tilde{k}$ and omit the upper index from the elements $\hat{f}^{(i)}_j,\hat{g}^{(i)}_j,\tilde{f}^{(i)}_j,\tilde{g}^{(i)}_j$, and then that $\hat{k}=\tilde{k}=:k$, and $\hat{f}_j=\tilde{f}_j$ and $\hat{g}_j=\tilde{g}_j$ for all $j\in\{1,\ldots,k\}$. In a next step, we see that we can assume without loss of generality that we have an integer $k\geqslant 0$, binary terms $f_1,g_1,\ldots,f_k,g_k$ such that $(f_j,g_j)\in P$, i.e., $f_j(x,x)=g_j(x,x)$, for all $j\in\{1,\ldots,k\}$, and $(2(k+m+1))$-ary terms $s_1,\ldots,s_{m''}$ such that the $k+m+1$ elements
$$(y,x),(f_1,g_1)\dots,(f_k,g_k),(p_1(x,x,y),p_1(x,y,y))\dots,(p_m(x,x,y),p_m(x,y,y))$$
of $P$ are pairwise distinct and 
\begin{align*}
	Z_i=s_i(&\iota_1(y,x),\iota_1(f_1,g_1),\ldots,\iota_1(f_k,g_k),\\
	&\iota_1(p_1(x,x,y),p_1(x,y,y)),\ldots,\iota_1(p_m(x,x,y),p_m(x,y,y)),\\
	&\iota_2(y,x),\iota_2(f_1,g_1),\ldots,\iota_2(f_k,g_k),\\
	&\iota_2(p_1(x,x,y),p_1(x,y,y)),\ldots,\iota_2(p_m(x,x,y),p_m(x,y,y)))
\end{align*}
for all $i\in\{1,\ldots,m''\}$. Notice that $(y,x)$ cannot be of the form $(p_j(x,x,y),p_j(x,y,y))$ since we supposed that $\mathbb{V}$ is not a Mal'tsev variety. By setting, for all $i\in\{1,\ldots,n\}$, the $(2(k+2m+1))$-ary term
\begin{align*}
	&t'_i(u,v_1,\ldots,v_k,w_1,\ldots,w_m,\widetilde{w}_1,\dots,\widetilde{w}_m,u',v'_1,\ldots,v'_k,w'_1,\ldots,w'_m,\widetilde{w}'_1,\dots,\widetilde{w}'_m)\\
	:=\, &t_i
	\begin{aligned}[t]
	(& s_1(u,v_1,\ldots,v_k,w_1,\ldots,w_m,u',v'_1,\ldots,v'_k,w'_1,\ldots,w'_m)\ldots,\\
	&s_{m''}(u,v_1,\ldots,v_k,w_1,\ldots,w_m,u',v'_1,\ldots,v'_k,w'_1,\ldots,w'_m),\widetilde{w}_1,\dots,\widetilde{w}_m,\widetilde{w}'_1,\dots,\widetilde{w}'_m),
	\end{aligned}
\end{align*} 
Equations~\eqref{eq:Xi=ti} and~\eqref{eq:Xi+1=ti} translate into
\begin{align*}
	X_i &= t'_i
	\begin{aligned}[t]
		\Bigl(&\iota_1(y,x),\iota_1(f_1,g_1),\ldots,\iota_1(f_k,g_k),\\
		&\iota_1\bigl(p_1(x,x,y),p_1(x,y,y)\bigr),\ldots,\iota_1\bigl(p_m(x,x,y),p_m(x,y,y)\bigr),\\
		&\iota_1\bigl(p_1(x,x,y),p_1(x,y,y)\bigr),\ldots,\iota_1\bigl(p_m(x,x,y),p_m(x,y,y)\bigr),\\
		&\iota_2(y,x),\iota_2(f_1,g_1),\ldots,\iota_2(f_k,g_k),\\
		&\iota_2\bigl(p_1(x,x,y),p_1(x,y,y)\bigr),\ldots,\iota_2\bigl(p_m(x,x,y),p_m(x,y,y)\bigr),\\
		&\iota_2\bigl(p_1(x,x,y),p_1(x,y,y)\bigr),\ldots,\iota_2\bigl(p_m(x,x,y),p_m(x,y,y)\bigr)\Bigr)
	\end{aligned}\\
	&=: t'_i\{1,2\}
\end{align*}
and
\begin{align*}
	X_{i+1} &= t'_i
	\begin{aligned}[t]
		\Bigl(&\iota_1(y,x),\iota_1(f_1,g_1),\ldots,\iota_1(f_k,g_k),\\
		&\iota_1\bigl(p_1(x,x,y),p_1(x,y,y)\bigr),\ldots,\iota_1\bigl(p_m(x,x,y),p_m(x,y,y)\bigr),\\
		&\iota_2\bigl(p_1(x,x,y),p_1(x,y,y)\bigr),\ldots,\iota_2\bigl(p_m(x,x,y),p_m(x,y,y)\bigr),\\
		&\iota_2(y,x),\iota_2(f_1,g_1),\ldots,\iota_2(f_k,g_k),\\
		&\iota_2\bigl(p_1(x,x,y),p_1(x,y,y)\bigr),\ldots,\iota_2\bigl(p_m(x,x,y),p_m(x,y,y)\bigr),\\
		&\iota_1\bigl(p_1(x,x,y),p_1(x,y,y)\bigr),\ldots,\iota_1\bigl(p_m(x,x,y),p_m(x,y,y)\bigr)\Bigr)
	\end{aligned}\\
	&=: t'_i\{2,1\}
\end{align*}
respectively. In the notation $t'_i\{1,2\}$ and $t'_i\{2,1\}$, the first (respectively the second) entry inside the brackets corresponds to which of $\iota_1$ or $\iota_2$ is used in the third (respectively the sixth) row in the above expressions. Hence we see that the condition $q_1(y,x)=q_2(y,x)$ implies the existence of integers $k,m\geqslant 0$ and $n\geqslant 1$, binary terms $f_1,g_1,\ldots,f_k,g_k$ with $f_i(x,x)=g_i(x,x)$ for all $i\in\{1,\ldots,k\}$, ternary terms $p_1,\ldots,p_m$ and $(2(k+2m+1))$-ary terms $t'_1,\ldots,t'_n$ such that the equations
\begin{align*}
		\iota_1&(y,x)\\
		=t'_1&
		\begin{aligned}[t]
			\Bigl(&\iota_1(y,x),\iota_1(f_1,g_1),\ldots,\iota_1(f_k,g_k),\\
			&\iota_1\bigl(p_1(x,x,y),p_1(x,y,y)\bigr),\ldots,\iota_1\bigl(p_m(x,x,y),p_m(x,y,y)\bigr),\\
			&\iota_1\bigl(p_1(x,x,y),p_1(x,y,y)\bigr),\ldots,\iota_1\bigl(p_m(x,x,y),p_m(x,y,y)\bigr),\\
			&\iota_2(y,x),\iota_2(f_1,g_1),\ldots,\iota_2(f_k,g_k),\\
			&\iota_2\bigl(p_1(x,x,y),p_1(x,y,y)\bigr),\ldots,\iota_2\bigl(p_m(x,x,y),p_m(x,y,y)\bigr),\\
			&\iota_2\bigl(p_1(x,x,y),p_1(x,y,y)\bigr),\ldots,\iota_2\bigl(p_m(x,x,y),p_m(x,y,y)\bigr)\Bigr)
		\end{aligned}\tag{a}\label{eq:a}
\end{align*}
and, for all $i\in\{1,\dots,n-1\}$,
\begin{align*}
		t'_i&
		\begin{aligned}[t] 
			\Bigl(&\iota_1(y,x),\iota_1(f_1,g_1),\ldots,\iota_1(f_k,g_k),\\
			&\iota_1\bigl(p_1(x,x,y),p_1(x,y,y)\bigr),\ldots,\iota_1\bigl(p_m(x,x,y),p_m(x,y,y)\bigr),\\
			&\iota_2\bigl(p_1(x,x,y),p_1(x,y,y)\bigr),\ldots,\iota_2\bigl(p_m(x,x,y),p_m(x,y,y)\bigr),\\
			&\iota_2(y,x),\iota_2(f_1,g_1),\ldots,\iota_2(f_k,g_k),\\
			&\iota_2\bigl(p_1(x,x,y),p_1(x,y,y)\bigr),\ldots,\iota_2\bigl(p_m(x,x,y),p_m(x,y,y)\bigr),\\
			&\iota_1\bigl(p_1(x,x,y),p_1(x,y,y)\bigr),\ldots,\iota_1\bigl(p_m(x,x,y),p_m(x,y,y)\bigr)\Bigr)
		\end{aligned}\\
		=t'_{i+1}&
		\begin{aligned}[t]
			\Bigl(&\iota_1(y,x),\iota_1(f_1,g_1),\ldots,\iota_1(f_k,g_k),\\
			&\iota_1\bigl(p_1(x,x,y),p_1(x,y,y)\bigr),\ldots,\iota_1\bigl(p_m(x,x,y),p_m(x,y,y)\bigr),\\
			&\iota_1\bigl(p_1(x,x,y),p_1(x,y,y)\bigr),\ldots,\iota_1\bigl(p_m(x,x,y),p_m(x,y,y)\bigr),\\
			&\iota_2(y,x),\iota_2(f_1,g_1),\ldots,\iota_2(f_k,g_k),\\
			&\iota_2\bigl(p_1(x,x,y),p_1(x,y,y)\bigr),\ldots,\iota_2\bigl(p_m(x,x,y),p_m(x,y,y)\bigr),\\
			&\iota_2\bigl(p_1(x,x,y),p_1(x,y,y)\bigr),\ldots,\iota_2\bigl(p_m(x,x,y),p_m(x,y,y)\bigr)\Bigr)
		\end{aligned}\tag{b($\i$)}\label{eq:bi}
\end{align*}
and
\begin{align*}
	t'_n&
	\begin{aligned}[t]
		\Bigl(&\iota_1(y,x),\iota_1(f_1,g_1),\ldots,\iota_1(f_k,g_k),\\
		&\iota_1\bigl(p_1(x,x,y),p_1(x,y,y)\bigr),\ldots,\iota_1\bigl(p_m(x,x,y),p_m(x,y,y)\bigr),\\
		&\iota_2\bigl(p_1(x,x,y),p_1(x,y,y)\bigr),\ldots,\iota_2\bigl(p_m(x,x,y),p_m(x,y,y)\bigr),\\
		&\iota_2(y,x),\iota_2(f_1,g_1),\ldots,\iota_2(f_k,g_k),\\
		&\iota_2\bigl(p_1(x,x,y),p_1(x,y,y)\bigr),\ldots,\iota_2\bigl(p_m(x,x,y),p_m(x,y,y)\bigr),\\
		&\iota_1\bigl(p_1(x,x,y),p_1(x,y,y)\bigr),\ldots,\iota_1\bigl(p_m(x,x,y),p_m(x,y,y)\bigr)\Bigr)
	\end{aligned}\\
=\iota_2&({y},{x})\tag{c}\label{eq:c}
\end{align*}
hold in $P+P$. We display the Equations~\eqref{eq:a}, \eqref{eq:bi} for all $i\in\{1,\ldots,n-1\}$ and \eqref{eq:c} in the diagram
\begin{footnotesize}
\begin{equation}\label{diagr:smalltrail}
	\xymatrixcolsep{4.3mm}\xymatrix{
		{\iota_1(y,x)}		\ar@{=}[r]^{\eqref{eq:a}}
		&{t'_1\{1,2\}} 		\ar@{.}[r]
		&{t'_1\{2,1\}}  	\ar@{=}[r]^-{\eqrefi{eq:bi}{1}}	&{t'_2\{1,2\}} \ar@{.}[r]
		&\cdots				\ar@{.}[r] 
		&{t'_{n-1}\{2,1\}}  	\ar@{=}[rr]^-{\eqrefi{eq:bi}{n-1}}
		&&{t'_n\{1,2\}}  	\ar@{.}[r]	
		&{t'_n\{2,1\}}  	\ar@{=}[r]^{\eqref{eq:c}}
		&{\iota_2(y,x).}
	}
\end{equation}
\end{footnotesize}
By Proposition~\ref{prop:coproduct}, Equation~\eqref{eq:a} means that there exist an integer $\ell_0\geqslant 0$ and elements $c_{(0,1)},\ldots$, $c_{(0,\ell_0)}\in \mathsf{F}(\mathsf{U}P+\mathsf{U}P)$ such that
{\allowdisplaybreaks
\begin{equation*}
	\begin{split}
		(y,x)^1&\sim c_{(0,1)},\\*
		c_{(0,1)}&\sim c_{(0,2)},\ldots,c_{(0,\ell_0-1)}\sim c_{(0,\ell_0)},\\
		c_{(0,\ell_0)}&\begin{aligned}[t]
			\sim t'_1\bigl(
			&(y,x)^1,(f_1,g_1)^1,\ldots,(f_k,g_k)^1,\\*
			&({p_1(x,x,y)},{p_1(x,y,y)})^1,\ldots,({p_m(x,x,y)},{p_m(x,y,y)})^1,\\*
			&({p_1(x,x,y)},{p_1(x,y,y)})^1,\ldots,({p_m(x,x,y)},{p_m(x,y,y)})^1,\\*
			&(y,x)^2,(f_1,g_1)^2,\ldots,(f_k,g_k)^2,\\*
			&({p_1(x,x,y)},{p_1(x,y,y)})^2,\ldots,({p_m(x,x,y)},{p_m(x,y,y)})^2,\\*
			&({p_1(x,x,y)},{p_1(x,y,y)})^2,\ldots,({p_m(x,x,y)},{p_m(x,y,y)})^2\bigr)
		\end{aligned}
	\end{split}
\end{equation*}}%
where we write, for an element $X\in P$, $X^1$ and $X^2$ to indicate that we view them as variables in $\mathsf{F}(\mathsf{U}P+\mathsf{U}P)$ coming from the first and second copy of $\mathsf{U}P$ in $\mathsf{U}P+\mathsf{U}P$ respectively, and $\sim$ is the relation on $\mathsf{F}(\mathsf{U}P+\mathsf{U}P)$ from Proposition~\ref{prop:coproduct}. More explicitly, this means that there exist, for all $j\in\{0,\ldots,\ell_0\}$, integers $a_{(0,j)}, b_{(0,j)}\geqslant 0$, elements $D^{(0,j)}_1,\ldots,D^{(0,j)}_{a_{(0,j)}},E^{(0,j)}_1,\ldots,E^{(0,j)}_{b_{(0,j)}}\in P$, $a_{(0,j)}$-ary terms $\mu^{(0,j)}_1,\lambda^{(0,j)}_1$, $b_{(0,j)}$-ary terms $\mu^{(0,j)}_2,\lambda^{(0,j)}_2$ and $(a_{(0,j)}+b_{(0,j)}+2)$-ary terms $\tau_{(0,j)}$ such that the equations
\begin{align*}
	&(y,x)^1\\
	=&\tau_{(0,0)}
	\begin{aligned}[t]
		\Bigl(&(D^{(0,0)}_1)^1,\ldots,(D^{(0,0)}_{a_{(0,0)}})^1,
		(E^{(0,0)}_1)^2,\ldots,(E^{(0,0)}_{b_{(0,0)}})^2,\\
		&\mu^{(0,0)}_1\bigl((D^{(0,0)}_1)^1,\ldots,(D^{(0,0)}_{a_{(0,0)}})^1\bigr),
		\mu^{(0,0)}_2\bigl((E^{(0,0)}_1)^2,\ldots,(E^{(0,0)}_{b_{(0,0)}})^2\bigr)\Bigr)
	\end{aligned}\tag{B1}\label{eq:B1}
\end{align*}
and
\begin{align*}
	&\tau_{(0,0)}
	\begin{aligned}[t]
		\Bigl(&(D^{(0,0)}_1)^1,\ldots,(D^{(0,0)}_{a_{(0,0)}})^1,
		(E^{(0,0)}_1)^2,\ldots,(E^{(0,0)}_{b_{(0,0)}})^2,\\
		&\lambda^{(0,0)}_1\bigl((D^{(0,0)}_1)^1,\ldots,(D^{(0,0)}_{a_{(0,0)}})^1\bigr),
		\lambda^{(0,0)}_2\bigl((E^{(0,0)}_1)^2,\ldots,(E^{(0,0)}_{b_{(0,0)}})^2\bigr)\Bigr)
	\end{aligned}\\
	=&c_{(0,1)}\tag{C($0,1$)}\label{eq:C01}
\end{align*}
hold in $\mathsf{F}(\mathsf{U}P+\mathsf{U}P)$ and the equations
\begin{align*}
	\mu^{(0,0)}_1\bigl(D^{(0,0)}_1,\ldots,D^{(0,0)}_{a_{(0,0)}}\bigr)
	&=\lambda^{(0,0)}_1\bigl(D^{(0,0)}_1,\ldots,D^{(0,0)}_{a_{(0,0)}}\bigr),\tag{M($0,0$)}\label{eq:M00}\\
	\mu^{(0,0)}_2\bigl(E^{(0,0)}_1,\ldots,E^{(0,0)}_{b_{(0,0)}}\bigr)
	&=\lambda^{(0,0)}_2\bigl(E^{(0,0)}_1,\ldots,E^{(0,0)}_{b_{(0,0)}}\bigr)\tag{N($0,0$)}\label{eq:N00}
\end{align*}
hold in~$P$; the equations
{\allowdisplaybreaks
\begin{align*}
		&c_{(0,j)}\\*
		=&\tau_{(0,j)}
		\begin{aligned}[t]
			\Bigl(&(D^{(0,j)}_1)^1,\ldots,(D^{(0,j)}_{a_{(0,j)}})^1,
			(E^{(0,j)}_1)^2,\ldots,(E^{(0,j)}_{b_{(0,j)}})^2,\\*
			&\mu^{(0,j)}_1\bigl((D^{(0,j)}_1)^1,\ldots,(D^{(0,j)}_{a_{(0,j)}})^1\bigr),
			\mu^{(0,j)}_2\bigl((E^{(0,j)}_1)^2,\ldots,(E^{(0,j)}_{b_{(0,j)}})^2\bigr)\Bigr),
		\end{aligned}\tag{D($0,\j$)}\label{eq:D0j}\\[5ex]
		&\tau_{(0,j)}
		\begin{aligned}[t]
			\Bigl(&(D^{(0,j)}_1)^1,\ldots,(D^{(0,j)}_{a_{(0,j)}})^1,
			(E^{(0,j)}_1)^2,\ldots,(E^{(0,j)}_{b_{(0,j)}})^2,\\*
			&\lambda^{(0,j)}_1\bigl((D^{(0,j)}_1)^1,\ldots,(D^{(0,j)}_{a_{(0,j)}})^1\bigr),
			\lambda^{(0,j)}_2\bigl((E^{(0,j)}_1)^2,\ldots,(E^{(0,j)}_{b_{(0,j)}})^2\bigr)\Bigr)
		\end{aligned}\\*
		=&c_{(0,j+1)}\tag{C($0,\jp$)}\label{eq:C0j}
\end{align*}}%
and 
\begin{align*}
	\mu^{(0,j)}_1\bigl(D^{(0,j)}_1,\ldots,D^{(0,j)}_{a_{(0,j)}}\bigr)
	&=\lambda^{(0,j)}_1\bigl(D^{(0,j)}_1,\ldots,D^{(0,j)}_{a_{(0,j)}}\bigr),\tag{M($0,\j$)}\label{eq:M0j}\\
	\mu^{(0,j)}_2\bigl(E^{(0,j)}_1,\ldots,E^{(0,j)}_{b_{(0,j)}}\bigr)
	&=\lambda^{(0,j)}_2\bigl(E^{(0,j)}_1,\ldots,E^{(0,j)}_{b_{(0,j)}}\bigr)\tag{N($0,\j$)}\label{eq:N0j}
\end{align*}
are satisfied for all $j\in\{1,\ldots,\ell_0-1\}$; and moreover the equations
{\allowdisplaybreaks
\begin{align*}
		&c_{(0,\ell_0)}\\*
		=&\tau_{(0,\ell_0)}
		\begin{aligned}[t]
			\Bigl(&(D^{(0,\ell_0)}_1)^1,\ldots,(D^{(0,\ell_0)}_{a_{(0,\ell_0)}})^1,
			(E^{(0,\ell_0)}_1)^2,\ldots,(E^{(0,\ell_0)}_{b_{(0,\ell_0)}})^2,\\*
			&\mu^{(0,\ell_0)}_1\bigl((D^{(0,\ell_0)}_1)^1,\ldots,(D^{(0,\ell_0)}_{a_{(0,\ell_0)}})^1\bigr),
			\mu^{(0,\ell_0)}_2\bigl((E^{(0,\ell_0)}_1)^2,\ldots,(E^{(0,\ell_0)}_{b_{(0,\ell_0)}})^2\bigr)\Bigr),
		\end{aligned}\tag{D($0,\ell_0$)}\label{eq:D0l0}\\[5ex]
		&\tau_{(0,\ell_0)}
		\begin{aligned}[t]
			\Bigl(&(D^{(0,\ell_0)}_1)^1,\ldots,(D^{(0,\ell_0)}_{a_{(0,\ell_0)}})^1,
			(E^{(0,\ell_0)}_1)^2,\ldots,(E^{(0,\ell_0)}_{b_{(0,\ell_0)}})^2,\\*
			&\lambda^{(0,\ell_0)}_1\bigl((D^{(0,\ell_0)}_1)^1,\ldots,(D^{(0,\ell_0)}_{a_{(0,\ell_0)}})^1\bigr),
			\lambda^{(0,\ell_0)}_2\bigl((E^{(0,\ell_0)}_1)^2,\ldots,(E^{(0,\ell_0)}_{b_{(0,\ell_0)}})^2\bigr)\Bigr)
		\end{aligned}\\*
		=&\begin{aligned}[t]
			t'_1\Bigl(
			&(y,x)^1,(f_1,g_1)^1,\ldots,(f_k,g_k)^1,\\*
			&({p_1(x,x,y)},{p_1(x,y,y)})^1,\ldots,({p_m(x,x,y)},{p_m(x,y,y)})^1,\\*
			&({p_1(x,x,y)},{p_1(x,y,y)})^1,\ldots,({p_m(x,x,y)},{p_m(x,y,y)})^1,\\*
			&(y,x)^2,(f_1,g_1)^2,\ldots,(f_k,g_k)^2,\\*
			&({p_1(x,x,y)},{p_1(x,y,y)})^2,\ldots,({p_m(x,x,y)},{p_m(x,y,y)})^2,\\*
			&({p_1(x,x,y)},{p_1(x,y,y)})^2,\ldots,({p_m(x,x,y)},{p_m(x,y,y)})^2\Bigr)
		\end{aligned}\tag{T($1$)}\label{eq:T1}
\end{align*}}%
and
\begin{align*}
	\mu^{(0,\ell_0)}_1\bigl(D^{(0,\ell_0)}_1,\ldots,D^{(0,\ell_0)}_{a_{(0,\ell_0)}}\bigr)
	&=\lambda^{(0,\ell_0)}_1\bigl(D^{(0,\ell_0)}_1,\ldots,D^{(0,\ell_0)}_{a_{(0,\ell_0)}}\bigr),\tag{M($0,\ell_0$)}\label{eq:M0l0}\\
	\mu^{(0,\ell_0)}_2\bigl(E^{(0,\ell_0)}_1,\ldots,E^{(0,\ell_0)}_{b_{(0,\ell_0)}}\bigr)
	&=\lambda^{(0,\ell_0)}_2\bigl(E^{(0,\ell_0)}_1,\ldots,E^{(0,\ell_0)}_{b_{(0,\ell_0)}}\bigr)\tag{N($0,\ell_0$)}\label{eq:N0l0}
\end{align*}
hold. For $i\in\{1,\ldots,n-1\}$, Equation~\eqref{eq:bi} means that there exist an integer $\ell_i\geqslant 0$ and elements $c_{(i,1)},\ldots,c_{(i,\ell_i)}\in \mathsf{F}(\mathsf{U}P+\mathsf{U}P)$ such that
{\allowdisplaybreaks
\begin{equation*}
	\begin{split}
		&
		\begin{aligned}[t] 
			t'_i\bigl(
			&(y,x)^1,(f_1,g_1)^1,\ldots,(f_k,g_k)^1,\\*
			&({p_1(x,x,y)},{p_1(x,y,y)})^1,\ldots,({p_m(x,x,y)},{p_m(x,y,y)})^1,\\*
			&({p_1(x,x,y)},{p_1(x,y,y)})^2,\ldots,({p_m(x,x,y)},{p_m(x,y,y)})^2,\\*
			&(y,x)^2,(f_1,g_1)^2,\ldots,(f_k,g_k)^2,\\*
			&({p_1(x,x,y)},{p_1(x,y,y)})^2,\ldots,({p_m(x,x,y)},{p_m(x,y,y)})^2,\\*
			&({p_1(x,x,y)},{p_1(x,y,y)})^1,\ldots,({p_m(x,x,y)},{p_m(x,y,y)})^1\bigr)\sim c_{(i,1)},
		\end{aligned}
	\\
	&c_{(i,1)}\sim c_{(i,2)},\ldots,c_{(i,\ell_i-1)}\sim c_{(i,\ell_i)},\\
	&c_{(i,\ell_i)}\sim
		 \begin{aligned}[t] 
		 	t'_{i+1}\bigl(
			&(y,x)^1,(f_1,g_1)^1,\ldots,(f_k,g_k)^1,\\*
			&({p_1(x,x,y)},{p_1(x,y,y)})^1,\ldots,({p_m(x,x,y)},{p_m(x,y,y)})^1,\\*
			&({p_1(x,x,y)},{p_1(x,y,y)})^1,\ldots,({p_m(x,x,y)},{p_m(x,y,y)})^1,\\*
			&(y,x)^2,(f_1,g_1)^2,\ldots,(f_k,g_k)^2,\\*
			&({p_1(x,x,y)},{p_1(x,y,y)})^2,\ldots,({p_m(x,x,y)},{p_m(x,y,y)})^2,\\*
			&({p_1(x,x,y)},{p_1(x,y,y)})^2,\ldots,({p_m(x,x,y)},{p_m(x,y,y)})^2\bigr).
		 \end{aligned}
	\end{split}
\end{equation*}}
More precisely, there exist, for all $j\in\{0,\ldots,\ell_i\}$, integers $a_{(i,j)}, b_{(i,j)}\geqslant 0$, elements $D^{(i,j)}_1,\ldots,D^{(i,j)}_{a_{(i,j)}}$, $E^{(i,j)}_1,\ldots,E^{(i,j)}_{b_{(i,j)}}\in P$, $a_{(i,j)}$-ary terms $\mu^{(i,j)}_1,\lambda^{(i,j)}_1$, $b_{(i,j)}$-ary terms $\mu^{(i,j)}_2$, $\lambda^{(i,j)}_2$ and $(a_{(i,j)}+b_{(i,j)}+2)$-ary terms $\tau_{(i,j)}$ such that the equations
\begin{align*}
		&
		\begin{aligned}[t] 
			t'_i\Bigl(
			&(y,x)^1,(f_1,g_1)^1,\ldots,(f_k,g_k)^1,\\*
			&({p_1(x,x,y)},{p_1(x,y,y)})^1,\ldots,({p_m(x,x,y)},{p_m(x,y,y)})^1,\\*
			&({p_1(x,x,y)},{p_1(x,y,y)})^2,\ldots,({p_m(x,x,y)},{p_m(x,y,y)})^2,\\*
			&(y,x)^2,(f_1,g_1)^2,\ldots,(f_k,g_k)^2,\\*
			&({p_1(x,x,y)},{p_1(x,y,y)})^2,\ldots,({p_m(x,x,y)},{p_m(x,y,y)})^2,\\*
			&({p_1(x,x,y)},{p_1(x,y,y)})^1,\ldots,({p_m(x,x,y)},{p_m(x,y,y)})^1\Bigr)
		\end{aligned}\\
		=&\tau_{(i,0)}
		\begin{aligned}[t]
			\Bigl(&(D^{(i,0)}_1)^1,\ldots,(D^{(i,0)}_{a_{(i,0)}})^1,
			(E^{(i,0)}_1)^2,\ldots,(E^{(i,0)}_{b_{(i,0)}})^2,\\
			&\mu^{(i,0)}_1\bigl((D^{(i,0)}_1)^1,\ldots,(D^{(i,0)}_{a_{(i,0)}})^1\bigr),
			\mu^{(i,0)}_2\bigl((E^{(i,0)}_1)^2,\ldots,(E^{(i,0)}_{b_{(i,0)}})^2\bigr)\Bigr)
		\end{aligned}\tag{S($\i$)}\label{eq:Si}
\end{align*}
and
\begin{align*}
		&\tau_{(i,0)}
		\begin{aligned}[t]
			\Bigl(&(D^{(i,0)}_1)^1,\ldots,(D^{(i,0)}_{a_{(i,0)}})^1,
			(E^{(i,0)}_1)^2,\ldots,(E^{(i,0)}_{b_{(i,0)}})^2,\\
			&\lambda^{(i,0)}_1\bigl((D^{(i,0)}_1)^1,\ldots,(D^{(i,0)}_{a_{(i,0)}})^1\bigr),
			\lambda^{(i,0)}_2\bigl((E^{(i,0)}_1)^2,\ldots,(E^{(i,0)}_{b_{(i,0)}})^2\bigr)\Bigr)
		\end{aligned}\\
		=&c_{(i,1)}\tag{C($\i,1$)}\label{eq:Ci1}
\end{align*}
hold in $\mathsf{F}(\mathsf{U}P+\mathsf{U}P)$ and the equations
\begin{align*}
	\mu^{(i,0)}_1\bigl(D^{(i,0)}_1,\ldots,D^{(i,0)}_{a_{(i,0)}}\bigr)
	&=\lambda^{(i,0)}_1\bigl(D^{(i,0)}_1,\ldots,D^{(i,0)}_{a_{(i,0)}}\bigr),\tag{M($\i,0$)}\label{eq:Mi0}\\
	\mu^{(i,0)}_2\bigl(E^{(i,0)}_1,\ldots,E^{(i,0)}_{b_{(i,0)}}\bigr)
	&=\lambda^{(i,0)}_2\bigl(E^{(i,0)}_1,\ldots,E^{(i,0)}_{b_{(i,0)}}\bigr)\tag{N($\i,0$)}\label{eq:Ni0}
\end{align*}
hold in~$P$; the equations
{\allowdisplaybreaks
\begin{align*}
		&c_{(i,j)}\\*
		=&\tau_{(i,j)}
		\begin{aligned}[t]
			\Bigl(&(D^{(i,j)}_1)^1,\ldots,(D^{(i,j)}_{a_{(i,j)}})^1,
			(E^{(i,j)}_1)^2,\ldots,(E^{(i,j)}_{b_{(i,j)}})^2,\\*
			&\mu^{(i,j)}_1\bigl((D^{(i,j)}_1)^1,\ldots,(D^{(i,j)}_{a_{(i,j)}})^1\bigr),
			\mu^{(i,j)}_2\bigl((E^{(i,j)}_1)^2,\ldots,(E^{(i,j)}_{b_{(i,j)}})^2\bigr)\Bigr),
		\end{aligned}\tag{D($\i,\j$)}\label{eq:Dij}\\[5ex]
		&\tau_{(i,j)}
		\begin{aligned}[t]
			\Bigl(&(D^{(i,j)}_1)^1,\ldots,(D^{(i,j)}_{a_{(i,j)}})^1,
			(E^{(i,j)}_1)^2,\ldots,(E^{(i,j)}_{b_{(i,j)}})^2,\\*
			&\lambda^{(i,j)}_1\bigl((D^{(i,j)}_1)^1,\ldots,(D^{(i,j)}_{a_{(i,j)}})^1\bigr),
			\lambda^{(i,j)}_2\bigl((E^{(i,j)}_1)^2,\ldots,(E^{(i,j)}_{b_{(i,j)}})^2\bigr)\Bigr)
		\end{aligned}\\*
		=&c_{(i,j+1)}\tag{C($\i,\jp$)}\label{eq:Cij}
\end{align*}}
and
\begin{align*}
	\mu^{(i,j)}_1\bigl(D^{(i,j)}_1,\ldots,D^{(i,j)}_{a_{(i,j)}}\bigr)
	&=\lambda^{(i,j)}_1\bigl(D^{(i,j)}_1,\ldots,D^{(i,j)}_{a_{(i,j)}}\bigr),\tag{M($\i,\j$)}\label{eq:Mij}\\
	\mu^{(i,j)}_2\bigl(E^{(i,j)}_1,\ldots,E^{(i,j)}_{b_{(i,j)}}\bigr)
	&=\lambda^{(i,j)}_2\bigl(E^{(i,j)}_1,\ldots,E^{(i,j)}_{b_{(i,j)}}\bigr)\tag{N($\i,\j$)}\label{eq:Nij}
\end{align*}
are satisfied for all $j\in\{1,\ldots,\ell_i-1\}$; and moreover the equations
{\allowdisplaybreaks
\begin{align*}
		&c_{(i,\ell_i)}\\*
		=&\tau_{(i,\ell_i)}
		\begin{aligned}[t]
			\Bigl(&(D^{(i,\ell_i)}_1)^1,\ldots,(D^{(i,\ell_i)}_{a_{(i,\ell_i)}})^1,
			(E^{(i,\ell_i)}_1)^2,\ldots,(E^{(i,\ell_i)}_{b_{(i,\ell_i)}})^2,\\*
			&\mu^{(i,\ell_i)}_1\bigl((D^{(i,\ell_i)}_1)^1,\ldots,(D^{(i,\ell_i)}_{a_{(i,\ell_i)}})^1\bigr),
			\mu^{(i,\ell_i)}_2\bigl((E^{(i,\ell_i)}_1)^2,\ldots,(E^{(i,\ell_i)}_{b_{(i,\ell_i)}})^2\bigr)\Bigr),
		\end{aligned}\tag{D($\i,\ell_{\i}$)}\label{eq:Dili}\\[5ex]
		&\tau_{(i,\ell_i)}
		\begin{aligned}[t]
			\Bigl(&(D^{(i,\ell_i)}_1)^1,\ldots,(D^{(i,\ell_i)}_{a_{(i,\ell_i)}})^1,
			(E^{(i,\ell_i)}_1)^2,\ldots,(E^{(i,\ell_i)}_{b_{(i,\ell_i)}})^2,\\*
			&\lambda^{(i,\ell_i)}_1\bigl((D^{(i,\ell_i)}_1)^1,\ldots,(D^{(i,\ell_i)}_{a_{(i,\ell_i)}})^1\bigr),
			\lambda^{(i,\ell_i)}_2\bigl((E^{(i,\ell_i)}_1)^2,\ldots,(E^{(i,\ell_i)}_{b_{(i,\ell_i)}})^2\bigr)\Bigr)
		\end{aligned}\\*
		=&
		\begin{aligned}[t]
			t'_{i+1}\Bigl(
			&(y,x)^1,(f_1,g_1)^1,\ldots,(f_k,g_k)^1,\\*
			&({p_1(x,x,y)},{p_1(x,y,y)})^1,\ldots,({p_m(x,x,y)},{p_m(x,y,y)})^1,\\*
			&({p_1(x,x,y)},{p_1(x,y,y)})^1,\ldots,({p_m(x,x,y)},{p_m(x,y,y)})^1,\\*
			&(y,x)^2,(f_1,g_1)^2,\ldots,(f_k,g_k)^2,\\*
			&({p_1(x,x,y)},{p_1(x,y,y)})^2,\ldots,({p_m(x,x,y)},{p_m(x,y,y)})^2,\\*
			&({p_1(x,x,y)},{p_1(x,y,y)})^2,\ldots,({p_m(x,x,y)},{p_m(x,y,y)})^2\Bigr)
	\end{aligned}\tag{T($\ip$)}\label{eq:Ti}
\end{align*}}%
and
\begin{align*}
	\mu^{(i,\ell_i)}_1\bigl(D^{(i,\ell_i)}_1,\ldots,D^{(i,\ell_i)}_{a_{(i,\ell_i)}}\bigr)
	&=\lambda^{(i,\ell_i)}_1\bigl(D^{(i,\ell_i)}_1,\ldots,D^{(i,\ell_i)}_{a_{(i,\ell_i)}}\bigr),\tag{M($\i,\ell_{\i}$)}\label{eq:Mili}\\
	\mu^{(i,\ell_i)}_2\bigl(E^{(i,\ell_i)}_1,\ldots,E^{(i,\ell_i)}_{b_{(i,\ell_i)}}\bigr)
	&=\lambda^{(i,\ell_i)}_2\bigl(E^{(i,\ell_i)}_1,\ldots,E^{(i,\ell_i)}_{b_{(i,\ell_i)}}\bigr)\tag{N($\i,\ell_{\i}$)}\label{eq:Nili}
\end{align*}
hold. Equation~\eqref{eq:c} means that there exist an integer $\ell_n\geqslant 0$ and elements $c_{(n,1)},\ldots,c_{(n,\ell_n)}$ in ${\mathsf{F}(\mathsf{U}P+\mathsf{U}P)}$ such that
{\allowdisplaybreaks
\begin{equation*}
	\begin{split}
		&\begin{aligned}[t] 
			t'_n\bigl(
			&(y,x)^1,(f_1,g_1)^1,\ldots,(f_k,g_k)^1,\\*
			&({p_1(x,x,y)},{p_1(x,y,y)})^1,\ldots,({p_m(x,x,y)},{p_m(x,y,y)})^1,\\*
			&({p_1(x,x,y)},{p_1(x,y,y)})^2,\ldots,({p_m(x,x,y)},{p_m(x,y,y)})^2,\\*
			&(y,x)^2,(f_1,g_1)^2,\ldots,(f_k,g_k)^2,\\*
			&({p_1(x,x,y)},{p_1(x,y,y)})^2,\ldots,({p_m(x,x,y)},{p_m(x,y,y)})^2,\\*
			&({p_1(x,x,y)},{p_1(x,y,y)})^1,\ldots,({p_m(x,x,y)},{p_m(x,y,y)})^1\bigr)\sim c_{(n,1)},
		\end{aligned}\\
		&c_{(n,1)}\sim c_{(n,2)},\ldots,c_{(n,\ell_n-1)}\sim c_{(n,\ell_n)},\\*
		&c_{(n,\ell_n)}\sim (y,x)^2.
	\end{split}
\end{equation*}}
More explicitly, this means that there exist, for all $j\in\{0,\ldots,\ell_n\}$, integers $a_{(n,j)}, b_{(n,j)}\geqslant 0$, elements $D^{(n,j)}_1,\ldots,D^{(n,j)}_{a_{(n,j)}},E^{(n,j)}_1,\ldots,E^{(n,j)}_{b_{(n,j)}}\in P$, $a_{(n,j)}$-ary terms $\mu^{(n,j)}_1,\lambda^{(n,j)}_1$, $b_{(n,j)}$-ary terms $\mu^{(n,j)}_2,\lambda^{(n,j)}_2$ and $(a_{(n,j)}+b_{(n,j)}+2)$-ary terms $\tau_{(n,j)}$ such that the equations
\begin{align*}
		&\begin{aligned}[t] 
			t'_n\Bigl(
			&(y,x)^1,(f_1,g_1)^1,\ldots,(f_k,g_k)^1,\\*
			&({p_1(x,x,y)},{p_1(x,y,y)})^1,\ldots,({p_m(x,x,y)},{p_m(x,y,y)})^1,\\*
			&({p_1(x,x,y)},{p_1(x,y,y)})^2,\ldots,({p_m(x,x,y)},{p_m(x,y,y)})^2,\\*
			&(y,x)^2,(f_1,g_1)^2,\ldots,(f_k,g_k)^2,\\*
			&({p_1(x,x,y)},{p_1(x,y,y)})^2,\ldots,({p_m(x,x,y)},{p_m(x,y,y)})^2,\\*
			&({p_1(x,x,y)},{p_1(x,y,y)})^1,\ldots,({p_m(x,x,y)},{p_m(x,y,y)})^1\Bigr)
		\end{aligned}\\
		=&\tau_{(n,0)}
		\begin{aligned}[t]
			\Bigl(&(D^{(n,0)}_1)^1,\ldots,(D^{(n,0)}_{a_{(n,0)}})^1,
			(E^{(n,0)}_1)^2,\ldots,(E^{(n,0)}_{b_{(n,0)}})^2,\\
			&\mu^{(n,0)}_1\bigl((D^{(n,0)}_1)^1,\ldots,(D^{(n,0)}_{a_{(n,0)}})^1\bigr),
			\mu^{(n,0)}_2\bigl((E^{(n,0)}_1)^2,\ldots,(E^{(n,0)}_{b_{(n,0)}})^2\bigr)\Bigr)
		\end{aligned}\tag{S($n$)}\label{eq:Sn}
\end{align*}
and
\begin{align*}
		&\tau_{(n,0)}
		\begin{aligned}[t]
			\Bigl(&(D^{(n,0)}_1)^1,\ldots,(D^{(n,0)}_{a_{(n,0)}})^1,
			(E^{(n,0)}_1)^2,\ldots,(E^{(n,0)}_{b_{(n,0)}})^2,\\
			&\lambda^{(n,0)}_1\bigl((D^{(n,0)}_1)^1,\ldots,(D^{(n,0)}_{a_{(n,0)}})^1\bigr),
			\lambda^{(n,0)}_2\bigl((E^{(n,0)}_1)^2,\ldots,(E^{(n,0)}_{b_{(n,0)}})^2\bigr)\Bigr)
		\end{aligned}\\
		=&c_{(n,1)}\tag{C($n,1$)}\label{eq:Cn1}
\end{align*}
hold in $\mathsf{F}(\mathsf{U}P+\mathsf{U}P)$ and the equations
\begin{align*}
	\mu^{(n,0)}_1\bigl(D^{(n,0)}_1,\ldots,D^{(n,0)}_{a_{(n,0)}}\bigr)
	&=\lambda^{(n,0)}_1\bigl(D^{(n,0)}_1,\ldots,D^{(n,0)}_{a_{(n,0)}}\bigr),\tag{M($n,0$)}\label{eq:Mn0}\\
	\mu^{(n,0)}_2\bigl(E^{(n,0)}_1,\ldots,E^{(n,0)}_{b_{(n,0)}}\bigr)
	&=\lambda^{(n,0)}_2\bigl(E^{(n,0)}_1,\ldots,E^{(n,0)}_{b_{(n,0)}}\bigr)\tag{N($n,0$)}\label{eq:Nn0}
\end{align*}
hold in~$P$; the equations
{\allowdisplaybreaks
\begin{align*}
		&c_{(n,j)}\\*
		=&\tau_{(n,j)}
		\begin{aligned}[t]
			\Bigl(&(D^{(n,j)}_1)^1,\ldots,(D^{(n,j)}_{a_{(n,j)}})^1,
			(E^{(n,j)}_1)^2,\ldots,(E^{(n,j)}_{b_{(n,j)}})^2,\\*
			&\mu^{(n,j)}_1\bigl((D^{(n,j)}_1)^1,\ldots,(D^{(n,j)}_{a_{(n,j)}})^1\bigr),
			\mu^{(n,j)}_2\bigl((E^{(n,j)}_1)^2,\ldots,(E^{(n,j)}_{b_{(n,j)}})^2\bigr)\Bigr),
		\end{aligned}\tag{D($n,\j$)}\label{eq:Dnj}\\[5ex]
		&\tau_{(n,j)}
		\begin{aligned}[t]
			\Bigl(&(D^{(n,j)}_1)^1,\ldots,(D^{(n,j)}_{a_{(n,j)}})^1,
			(E^{(n,j)}_1)^2,\ldots,(E^{(n,j)}_{b_{(n,j)}})^2,\\*
			&\lambda^{(n,j)}_1\bigl((D^{(n,j)}_1)^1,\ldots,(D^{(n,j)}_{a_{(n,j)}})^1\bigr),
			\lambda^{(n,j)}_2\bigl((E^{(n,j)}_1)^2,\ldots,(E^{(n,j)}_{b_{(n,j)}})^2\bigr)\Bigr)
		\end{aligned}\\*
		=&c_{(n,j+1)}\tag{C($n,\jp$)}\label{eq:Cnj}	
\end{align*}}%
and
\begin{align*}
	\mu^{(n,j)}_1\bigl(D^{(n,j)}_1,\ldots,D^{(n,j)}_{a_{(n,j)}}\bigr)
	&=\lambda^{(n,j)}_1\bigl(D^{(n,j)}_1,\ldots,D^{(n,j)}_{a_{(n,j)}}\bigr),\tag{M($n,\j$)}\label{eq:Mnj}\\
	\mu^{(n,j)}_2\bigl(E^{(n,j)}_1,\ldots,E^{(n,j)}_{b_{(n,j)}}\bigr)
	&=\lambda^{(n,j)}_2\bigl(E^{(n,j)}_1,\ldots,E^{(n,j)}_{b_{(n,j)}}\bigr)\tag{N($n,\j$)}\label{eq:Nnj}
\end{align*}
are satisfied for all $j\in\{1,\ldots,\ell_n-1\}$; and moreover the equations
{\allowdisplaybreaks
\begin{align*}
		&c_{(n,\ell_n)}\\*
		=&\tau_{(n,\ell_n)}
		\begin{aligned}[t]
			\Bigl(&(D^{(n,\ell_n)}_1)^1,\ldots,(D^{(n,\ell_n)}_{a_{(n,\ell_n)}})^1,
			(E^{(n,\ell_n)}_1)^2,\ldots,(E^{(n,\ell_n)}_{b_{(n,\ell_n)}})^2,\\*
			&\mu^{(n,\ell_n)}_1\bigl((D^{(n,\ell_n)}_1)^1,\ldots,(D^{(n,\ell_n)}_{a_{(n,\ell_n)}})^1\bigr),
			\mu^{(n,\ell_n)}_2\bigl((E^{(n,\ell_n)}_1)^2,\ldots,(E^{(n,\ell_n)}_{b_{(n,\ell_n)}})^2\bigr)\Bigr),
		\end{aligned}\tag{D($n,\ell_n$)}\label{eq:Dnln}\\[5ex]
		&\tau_{(n,\ell_n)}
		\begin{aligned}[t]
			\Bigl(&(D^{(n,\ell_n)}_1)^1,\ldots,(D^{(n,\ell_n)}_{a_{(n,\ell_n)}})^1,
			(E^{(n,\ell_n)}_1)^2,\ldots,(E^{(n,\ell_n)}_{b_{(n,\ell_n)}})^2,\\*
			&\lambda^{(n,\ell_n)}_1\bigl((D^{(n,\ell_n)}_1)^1,\ldots,(D^{(n,\ell_n)}_{a_{(n,\ell_n)}})^1\bigr),
			\lambda^{(n,\ell_n)}_2\bigl((E^{(n,\ell_n)}_1)^2,\ldots,(E^{(n,\ell_n)}_{b_{(n,\ell_n)}})^2\bigr)\Bigr)
		\end{aligned}\\*
		=&(y,x)^2 \tag{B2}\label{eq:B2}
\end{align*}}
and
\begin{align*}
	\mu^{(n,\ell_n)}_1\bigl(D^{(n,\ell_n)}_1,\ldots,D^{(n,\ell_n)}_{a_{(n,\ell_n)}}\bigr)
	&=\lambda^{(n,\ell_n)}_1\bigl(D^{(n,\ell_n)}_1,\ldots,D^{(n,\ell_n)}_{a_{(n,\ell_n)}}\bigr),\tag{M($n,\ell_n$)}\label{eq:Mnln}\\
	\mu^{(n,\ell_n)}_2\bigl(E^{(n,\ell_n)}_1,\ldots,E^{(n,\ell_n)}_{b_{(n,\ell_n)}}\bigr)
	&=\lambda^{(n,\ell_n)}_2\bigl(E^{(n,\ell_n)}_1,\ldots,E^{(n,\ell_n)}_{b_{(n,\ell_n)}}\bigr)\tag{N($n,\ell_n$)}\label{eq:Nnln}
\end{align*}
hold. Similar to Diagram~\eqref{diagr:smalltrail}, we display the Equations~\eqref{eq:B1}, \eqref{eq:B2}, $\eqrefip{eq:Ti}{i}$, \eqref{eq:Si}, $\eqrefijp{eq:Cij}{i}{j}$ and \eqref{eq:Dij} in the diagram
\begin{footnotesize}
\begin{align*}
	\eqref{eq:a}
	&\begin{cases}
		\xymatrixcolsep{3.7mm}\xymatrix{
			(y,x)^1					\ar@{=}[rr]^{\eqref{eq:B1}}
			&&\tau_{(0,0)}[\mu]		\ar@{.}[r]
			&\tau_{(0,0)}[\lambda]	\ar@{=}[rr]^-{\eqref{eq:C01}}
			&&c_{(0,1)}				\ar@{=}[rr]^-{\eqrefj{eq:D0j}{1}}
			&&\tau_{(0,1)}[\mu]		\ar@{.}[r]
			&\tau_{(0,1)}[\lambda]	\ar@{=}[rrr]^-{\eqrefjp{eq:C0j}{2}}
			&&&c_{(0,2)}				\ar@{}[d]^-{\vdots}
			\\
			t'_1[1,2]				\ar@{=}[rr]^-{\eqref{eq:T1}}
			&&\tau_{(0,\ell_0)}[\lambda] \ar@{.}[r]
			&\tau_{(0,\ell_0)}[\mu]	\ar@{=}[rr]^-{\eqref{eq:D0l0}}
			&&c_{(0,\ell_0)}		\ar@{=}[rr]^-{\eqrefjp{eq:C0j}{\ell_0}}
			&&\tau_{(0,\ell_0-1)}[\lambda] \ar@{.}[r]
			&\tau_{(0,\ell_0-1)}[\mu]\ar@{=}[rrr]^-{\eqrefj{eq:D0j}{\ell_0-1}}
			&&&c_{(0,\ell_0-1)}
		}
	\end{cases}\\
	\eqref{eq:bi}
	&\begin{cases}
		\xymatrixcolsep{3.7mm}\xymatrix{
			t'_i[2,1]					\ar@{=}[rr]^{\eqref{eq:Si}}
			&&\tau_{(i,0)}[\mu]			\ar@{.}[r]
			&\tau_{(i,0)}[\lambda]	\ar@{=}[rr]^-{\eqref{eq:Ci1}}
			&&c_{(i,1)}				\ar@{=}[rr]^-{\eqrefij{eq:Dij}{i}{1}}
			&&\tau_{(i,1)}[\mu]		\ar@{.}[r]
			&\tau_{(i,1)}[\lambda]	\ar@{=}[rrr]^-{\eqrefijp{eq:Cij}{i}{2}}
			&&&c_{(i,2)}				\ar@{}[d]^-{\vdots}
			\\
			t'_{i+1}[1,2]				\ar@{=}[rr]^-{\eqref{eq:Ti}}
			&&\tau_{(i,\ell_i)}[\lambda] \ar@{.}[r]
			&\tau_{(i,\ell_i)}[\mu]	\ar@{=}[rr]^-{\eqref{eq:Dili}}
			&&c_{(i,\ell_i)}		\ar@{=}[rr]^-{\eqrefijp{eq:Cij}{i}{\ell_i}}
			&&\tau_{(i,\ell_i-1)}[\lambda] \ar@{.}[r]
			&\tau_{(i,\ell_i-1)}[\mu]\ar@{=}[rrr]^-{\eqrefij{eq:Dij}{i}{\ell_i-1}}
			&&&c_{(i,\ell_i-1)}
		}
	\end{cases}\\
	\eqref{eq:c}
	&\begin{cases}
		\xymatrixcolsep{3.7mm}\xymatrix{
			t'_n[2,1]					\ar@{=}[rr]^{\eqref{eq:Sn}}
			&&\tau_{(n,0)}[\mu]			\ar@{.}[r]
			&\tau_{(n,0)}[\lambda]	\ar@{=}[rr]^-{\eqref{eq:Cn1}}
			&&c_{(n,1)}				\ar@{=}[rr]^-{\eqrefj{eq:Dnj}{1}}
			&&\tau_{(n,1)}[\mu]		\ar@{.}[r]	
			&\tau_{(n,1)}[\lambda]	\ar@{=}[rrr]^-{\eqrefjp{eq:Cnj}{2}}
			&&&c_{(n,2)}				\ar@{}[d]^-{\vdots}
			\\
			(y,x)^2				\ar@{=}[rr]^-{\eqref{eq:B2}}
			&&\tau_{(n,\ell_n)}[\lambda]	\ar@{.}[r]
			&\tau_{(n,\ell_n)}[\mu]	\ar@{=}[rr]^-{\eqref{eq:Dnln}}
			&&c_{(n,\ell_n)}		\ar@{=}[rr]^-{\eqrefjp{eq:Cnj}{\ell_n}}
			&&\tau_{(n,\ell_n-1)}[\lambda] \ar@{.}[r]
			&\tau_{(n,\ell_n-1)}[\mu]\ar@{=}[rrr]^-{\eqrefj{eq:Dnj}{\ell_n-1}}
			&&&c_{(n,\ell_n-1)},
		}
	\end{cases}
\end{align*}
\end{footnotesize}
where
{\allowdisplaybreaks
\begin{align*}
	t'_i[1,2]
	&:=
	\begin{aligned}[t] 
		t'_i\Bigl(
			&(y,x)^1,(f_1,g_1)^1,\ldots,(f_k,g_k)^1,\\*
			&({p_1(x,x,y)},{p_1(x,y,y)})^1,\ldots,({p_m(x,x,y)},{p_m(x,y,y)})^1,\\*
			&({p_1(x,x,y)},{p_1(x,y,y)})^1,\ldots,({p_m(x,x,y)},{p_m(x,y,y)})^1,\\*
			&(y,x)^2,(f_1,g_1)^2,\ldots,(f_k,g_k)^2,\\*
			&({p_1(x,x,y)},{p_1(x,y,y)})^2,\ldots,({p_m(x,x,y)},{p_m(x,y,y)})^2,\\*
			&({p_1(x,x,y)},{p_1(x,y,y)})^2,\ldots,({p_m(x,x,y)},{p_m(x,y,y)})^2\Bigr),
	\end{aligned} 
	\\
	t'_i[2,1]
	&:=
	\begin{aligned}[t] 
		t'_i\Bigl(
			&(y,x)^1,(f_1,g_1)^1,\ldots,(f_k,g_k)^1,\\*
			&({p_1(x,x,y)},{p_1(x,y,y)})^1,\ldots,({p_m(x,x,y)},{p_m(x,y,y)})^1,\\*
			&({p_1(x,x,y)},{p_1(x,y,y)})^2,\ldots,({p_m(x,x,y)},{p_m(x,y,y)})^2,\\*
			&(y,x)^2,(f_1,g_1)^2,\ldots,(f_k,g_k)^2,\\*
			&({p_1(x,x,y)},{p_1(x,y,y)})^2,\ldots,({p_m(x,x,y)},{p_m(x,y,y)})^2,\\*
			&({p_1(x,x,y)},{p_1(x,y,y)})^1,\ldots,({p_m(x,x,y)},{p_m(x,y,y)})^1\Bigr),
	\end{aligned} 
	\\
	\tau_{(i,j)}[\mu]
	&:=
	\tau_{(i,j)}
	\begin{aligned}[t]
		\Bigl(&(D^{(i,j)}_1)^1,\ldots,(D^{(i,j)}_{a_{(i,j)}})^1,
		(E^{(i,j)}_1)^2,\ldots,(E^{(i,j)}_{b_{(i,j)}})^2,\\*
		&\mu^{(i,j)}_1\bigl((D^{(i,j)}_1)^1,\ldots,(D^{(i,j)}_{a_{(i,j)}})^1\bigr),
		\mu^{(i,j)}_2\bigl((E^{(i,j)}_1)^2,\ldots,(E^{(i,j)}_{b_{(i,j)}})^2\bigr)\Bigr),
	\end{aligned} 
	\\
	\tau_{(i,j)}[\lambda]
	&:=
	\tau_{(i,j)}
	\begin{aligned}[t]
		\Bigl(&(D^{(i,j)}_1)^1,\ldots,(D^{(i,j)}_{a_{(i,j)}})^1,
		(E^{(i,j)}_1)^2,\ldots,(E^{(i,j)}_{b_{(i,j)}})^2,\\*
		&\lambda^{(i,j)}_1\bigl((D^{(i,j)}_1)^1,\ldots,(D^{(i,j)}_{a_{(i,j)}})^1\bigr),
		\lambda^{(i,j)}_2\bigl((E^{(i,j)}_1)^2,\ldots,(E^{(i,j)}_{b_{(i,j)}})^2\bigr)\Bigr).
	\end{aligned}
\end{align*}}
Since the relation $\sim$ as in Proposition~\ref{prop:coproduct} is reflexive, we can assume, without loss of generality, that $\ell_0=\cdots=\ell_n=: \ell$. As before, by possibly artificially increasing the number of variables the terms of the form $\tau_{(i,j)}$, $\mu_1^{(i,j)}$, $\mu_2^{(i,j)}$, $\lambda_1^{(i,j)}$ or $\lambda_2^{(i,j)}$ depend on, we can also assume, without loss of generality, that we have integers $a,b\geqslant 0$ such that $a_{(i,j)}=a$ and $b_{(i,j)}=b$ for all $i\in\{0,\ldots,n\}$ and $j\in\{0,\ldots,\ell\}$, and omit the upper index from the elements $D^{(i,j)}_\alpha,E^{(i,j)}_\alpha$.
Moreover, we can, without loss of generality, assume that $a=b$ and $D_\alpha=E_\alpha$ for all $\alpha\in\{1,\ldots,a\}$. Furthermore, up to adding some $D_\alpha$'s to the list of $(f_j,g_j)$'s, and up to adding $(y,x)$, some $(f_j,g_j)$'s and some $(p_j(x,x,y),p_j(x,y,y))$'s to the list of $D_\alpha$'s, we can assume that $a=k+m+1$ and $D_1=(y,x)$, $D_2=(f_1,g_1),\ldots$, $D_{k+1}=(f_k,g_k)$ and $D_{k+2}=(p_1(x,x,y),p_1(x,y,y)),\ldots$, $D_{k+m+1}=(p_m(x,x,y),p_m(x,y,y))$, keeping the elements
$$(y,x),(f_1,g_1)\dots,(f_k,g_k),(p_1(x,x,y),p_1(x,y,y))\dots,(p_m(x,x,y),p_m(x,y,y))$$
of $P$ pairwise distinct. Thus, Equations~\eqref{eq:a}, \eqref{eq:bi} for all $i\in\{1,\ldots,n-1\}$ and \eqref{eq:c} imply the existence of an integer $\ell\geqslant 0$, terms $c_{(i,1)},\ldots,c_{(i,\ell)}$, ${(k+m+1)}$-ary terms $\mu_1^{(i,j)},\mu_2^{(i,j)},\lambda_1^{(i,j)},\lambda_2^{(i,j)}$ and $(2(k+m+2))$-ary terms $\tau_{(i,j)}$ for all $i\in\{0,\ldots,n\}$ and $j\in\{0,\ldots,\ell\}$ such that the following equalities hold. Using variables $u,v_1,\ldots,v_k,w_1,\ldots,w_m$ and $u',v'_1,\ldots,v'_k,w'_1,\ldots,w'_m$, Equations~\eqref{eq:B1} and \eqref{eq:B2} read
\begin{align*}
	u
	&=\tau_{(0,0)}(u,\vec{v},\vec{w},u',\vec{v}',\vec{w}',\mu^{(0,0)}_1(u,\vec{v},\vec{w}),\mu^{(0,0)}_2(u',\vec{v}',\vec{w}')),\tag{B1'}\label{eq:B1'}\\
	u'
	&=\tau_{(n,\ell)}(u,\vec{v},\vec{w},u',\vec{v}',\vec{w}',\lambda^{(n,\ell)}_1(u,\vec{v},\vec{w}),\lambda^{(n,\ell)}_2(u',\vec{v}',\vec{w}')),\tag{B2'}\label{eq:B2'}
\end{align*}
where we write $\vec{v}$ for $v_1\ldots,v_k$ and $\vec{w}$ for $w_1,\ldots,w_m$, and analogously for $\vec{v}'$ and~$\vec{w}'$. Equations~\eqref{eq:T1} and~\eqref{eq:Ti}, and Equations~\eqref{eq:Si} and \eqref{eq:Sn} read
\begin{align*}
	t'_{i}(u,\vec{v},\vec{w},\vec{w},u',\vec{v}',\vec{w}',\vec{w}')
	&=\tau_{(i-1,\ell)}(u,\vec{v},\vec{w},u',\vec{v}',\vec{w}',\lambda^{(i-1,\ell)}_1(u,\vec{v},\vec{w}),\lambda^{(i-1,\ell)}_2(u',\vec{v}',\vec{w}')),\tag{T($\i$)'}\label{eq:Ti'}\\
	t'_i(u,\vec{v},\vec{w},\vec{w}',u',\vec{v}',\vec{w}',\vec{w})
	&=\tau_{(i,0)}(u,\vec{v},\vec{w},u',\vec{v}',\vec{w}',\mu^{(i,0)}_1(u,\vec{v},\vec{w}),\mu^{(i,0)}_2(u',\vec{v}',\vec{w}')),\tag{S($\i$)'}\label{eq:Si'}
\end{align*}
for all $i\in\{1,\ldots,n\}$. Equations~\eqref{eq:C01}, \eqref{eq:C0j}, \eqref{eq:Ci1}, \eqref{eq:Cij}, \eqref{eq:Cn1}, \eqref{eq:Cnj}, and Equations~\eqref{eq:D0j}, \eqref{eq:D0l0}, \eqref{eq:Dij}, \eqref{eq:Dili}, \eqref{eq:Dnj}, \eqref{eq:Dnln} read
\begin{align*}
	c_{(i,j)}
	&=\tau_{(i,j-1)}(u,\vec{v},\vec{w},u',\vec{v}',\vec{w}',\lambda^{(i,j-1)}_1(u,\vec{v},\vec{w}),\lambda^{(i,j-1)}_2(u',\vec{v}',\vec{w}'))\tag{C($\i,\j$)'}\label{eq:Cij'},\\
	c_{(i,j)}
	&=\tau_{(i,j)}(u,\vec{v},\vec{w},u',\vec{v}',\vec{w}',\mu^{(i,j)}_1(u,\vec{v},\vec{w}),\mu^{(i,j)}_2(u',\vec{v}',\vec{w}')),\tag{D($\i,\j$)'}\label{eq:Dij'}
\end{align*}
for all $i\in\{0,\ldots,n\}$ and $j\in\{1,\ldots,\ell\}$ where the variables on which the $c_{(i,j)}$'s depend are left inexplicit. Equations~\eqref{eq:M00}, \eqref{eq:M0j}, \eqref{eq:M0l0}, \eqref{eq:Mi0}, \eqref{eq:Mij}, \eqref{eq:Mili}, \eqref{eq:Mn0}, \eqref{eq:Mnj}, \eqref{eq:Mnln}, and Equations~\eqref{eq:N00}, \eqref{eq:N0j}, \eqref{eq:N0l0}, \eqref{eq:Ni0}, \eqref{eq:Nij}, \eqref{eq:Nili}, \eqref{eq:Nn0}, \eqref{eq:Nnj}, \eqref{eq:Nnln} read
\begin{align*}
	&\mu^{(i,j)}_\alpha\Bigl(\bigl(y,x\bigr),\bigl(f_1,g_1\bigr),\ldots,\bigl(f_k,g_k\bigr),\bigl(p_1(x,x,y),p_1(x,y,y)\bigr),\ldots,\bigl(p_m(x,x,y),p_m(x,y,y)\bigr)\Bigr)\\
	=&\lambda^{(i,j)}_\alpha\Bigl(\bigl(y,x\bigr),\bigl(f_1,g_1\bigr),\ldots,\bigl(f_k,g_k\bigr),\bigl(p_1(x,x,y),p_1(x,y,y)\bigr),\ldots,\bigl(p_m(x,x,y),p_m(x,y,y)\bigr)\Bigr)\tag{Z($\alpha,\i,\j$)}\label{eq:Zaij}
\end{align*}
for all $i\in\{0,\ldots,n\}$, $j\in\{0,\ldots,\ell\}$ and $\alpha\in\{1,2\}$. Equations~\eqref{eq:Cij'} and \eqref{eq:Dij'} imply the equation
\begin{align*}
	\tau_{(i,j)}(&u,\vec{v},\vec{w},u',\vec{v}',\vec{w}',\lambda^{(i,j)}_1(u,\vec{v},\vec{w}),\lambda^{(i,j)}_2(u',\vec{v}',\vec{w}'))\\
	=\tau_{(i,j+1)}(&u,\vec{v},\vec{w},u',\vec{v}',\vec{w}',\mu^{(i,j+1)}_1(u,\vec{v},\vec{w}),\mu^{(i,j+1)}_2(u',\vec{v}',\vec{w}'))\tag{W($\i,\j$)}\label{eq:Wij}
\end{align*}
for all $i\in\{0,\ldots,n\}$ and $j\in\{0,\ldots,\ell-1\}$. Equation~\eqref{eq:Zaij} means more explicitly that
\begin{align*}
	\mu^{(i,j)}_\alpha(&y,f_1,\ldots,f_k,p_1(x,x,y),\ldots,p_m(x,x,y))\\
	=\lambda^{(i,j)}_\alpha(&y,f_1,\ldots,f_k,p_1(x,x,y),\ldots,p_m(x,x,y))\tag{Z($\alpha,\i,\j$)a}\label{eq:Zaija}
\end{align*}
and
\begin{align*}
	\mu^{(i,j)}_\alpha(&x,g_1,\ldots,g_k,p_1(x,y,y),\ldots,p_m(x,y,y))\\
	=\lambda^{(i,j)}_\alpha(&x,g_1,\ldots,g_k,p_1(x,y,y),\ldots,p_m(x,y,y))\tag{Z($\alpha,\i,\j$)b}\label{eq:Zaijb}
\end{align*}
for all $i\in\{0,\ldots,n\}$, $j\in\{0,\ldots,\ell\}$ and $\alpha\in\{1,2\}$. We display the Equations~\eqref{eq:B1'}, \eqref{eq:B2'}, \eqref{eq:Ti'}, \eqref{eq:Si'} and \eqref{eq:Wij} in the diagram
\begin{equation}\label{diagr:expandedtrailVW}
\begin{aligned}
	\xymatrixcolsep{7.4mm}
	\xymatrix{
		u \ar@{=}[rr]^-{\eqref{eq:B1'}}
		&& \tau_{(0,0)}[[\mu]]	\ar@{.}[r]
		&\tau_{(0,0)}[[\lambda]] \ar@{=}[rr]^-{\eqrefij{eq:Wij}{0}{0}}
		&&\tau_{(0,1)}[[\mu]]	\ar@{.}[r]
		& \tau_{(0,1)}[[\lambda]] \ar@{}[d]^-{\vdots}\\
		t_1'[[1,2]]\ar@{.}[d]
		&&\tau_{(0,\ell)}[[\lambda]] \ar@{=}[ll]_-{\eqrefi{eq:Ti'}{1}} \ar@{.}[r]
		& \tau_{(0,\ell)}[[\mu]] \ar@{=}[rr]^-{\eqrefij{eq:Wij}{0}{\ell-1}}
		&&\tau_{(0,\ell-1)}[[\lambda]] \ar@{.}[r]
		&\tau_{(0,\ell-1)}[[\mu]]\\
		t_1'[[2,1]]\ar@{=}[rr]^-{\eqrefi{eq:Si'}{1}}
		&& \tau_{(1,0)}[[\mu]] \ar@{.}[r]
		&\tau_{(1,0)}[[\lambda]] \ar@{=}[rr]^-{\eqrefij{eq:Wij}{1}{0}}
		&&\tau_{(1,1)}[[\mu]] \ar@{.}[r]
		& \tau_{(1,1)}[[\lambda]] \ar@{}[d]^-{\vdots}\\
		&&
		&
		&&
		& \ar@{}[d]^-{\vdots}\\
		t_n'[[1,2]]\ar@{.}[d]
		&&\tau_{(n-1,\ell)}[[\lambda]] \ar@{=}[ll]_-{\eqrefi{eq:Ti'}{n}} \ar@{.}[r]
		& \tau_{(n-1,\ell)}[[\mu]]\,\, \ar@{}[r]|(.5){}="A"
		&&\,\,\tau_{(n-1,\ell-1)}[[\lambda]] \ar@{}[l]|(.55){}="B" \ar@{=}"A";"B"^-{\eqrefij{eq:Wij}{n-1}{\ell-1}} \ar@{.}[r]
		&\tau_{(n-1,\ell-1)}[[\mu]]\\
		t_n'[[2,1]]\ar@{=}[rr]^-{\eqrefi{eq:Si'}{n}}
		&& \tau_{(n,0)}[[\mu]] \ar@{.}[r]
		&\tau_{(n,0)}[[\lambda]] \ar@{=}[rr]^-{\eqrefij{eq:Wij}{n}{0}}
		&&\tau_{(n,1)}[[\mu]]\ar@{.}[r]
		& \tau_{(n,1)}[[\lambda]] \ar@{}[d]^-{\vdots}\\
		u' \ar@{=}[rr]^-{\eqref{eq:B2'}}
		&& \tau_{(n,\ell)}[[\lambda]] \ar@{.}[r]
		&\tau_{(n,\ell)}[[\mu]] \ar@{=}[rr]^-{\eqrefij{eq:Wij}{n}{\ell-1}}
		&&\tau_{(n,\ell-1)}[[\lambda]] \ar@{.}[r]
		& \tau_{(n,\ell-1)}[[\mu]],
	}
\end{aligned}
\end{equation}
where
{\allowdisplaybreaks
\begin{align*}
	\tau_{(i,j)}[[\mu]]
	&:=
	\tau_{(i,j)}(u,\vec{v},\vec{w},u',\vec{v}',\vec{w}',\mu^{(i,j)}_1(u,\vec{v},\vec{w}),\mu^{(i,j)}_2(u',\vec{v}',\vec{w}')),
	\\
	\tau_{(i,j)}[[\lambda]]
	&:=
	\tau_{(i,j)}(u,\vec{v},\vec{w},u',\vec{v}',\vec{w}',\lambda^{(i,j)}_1(u,\vec{v},\vec{w}),\lambda^{(i,j)}_2(u',\vec{v}',\vec{w}')),
	\\
	t_i'[[1,2]]
	&:=
	t_i'(u,\vec{v},\vec{w},\vec{w},u',\vec{v}',\vec{w}',\vec{w}'),
	\\
	t_i'[[2,1]]
	&:=
	t_i'(u,\vec{v},\vec{w},\vec{w}',u',\vec{v}',\vec{w}',\vec{w}).
\end{align*}}%
In comparison to $t_i'[[1,2]]$, the second occurrences of $\vec{w}$ and~$\vec{w}'$, respectively, are swapped in $t_i'[[2,1]]$. Furthermore, for each $\tau_{(i,j)}$, the terms $\mu_1^{(i,j)},\mu_2^{(i,j)},\lambda_1^{(i,j)},\lambda_2^{(i,j)}$ satisfy the Equations~\eqref{eq:Zaija} and \eqref{eq:Zaijb}. We call an equation of form~(W) if it is of the form 
\begin{align*}
	&\omega(u,\vec{v},\vec{w},u',\vec{v}',\vec{w}',\gamma_1(u,\vec{v},\vec{w}),\gamma_2(u',\vec{v}',\vec{w}'))\\
	=&\chi(u,\vec{v},\vec{w},u',\vec{v}',\vec{w}',\delta_1(u,\vec{v},\vec{w}),\delta_2(u',\vec{v}',\vec{w}')),
\end{align*}
of form~(T) if it is of the form 
\begin{align*}
	&\xi(u,\vec{v},\vec{w},\vec{w},u',\vec{v}',\vec{w}',\vec{w}')\\
	=&\omega(u,\vec{v},\vec{w},u',\vec{v}',\vec{w}',\gamma_1(u,\vec{v},\vec{w}),\gamma_2(u',\vec{v}',\vec{w}')),
\end{align*}
and of form~(S) if it is of the form 
\begin{align*}
	&\xi(u,\vec{v},\vec{w},\vec{w}',u',\vec{v}',\vec{w}',\vec{w})\\
	=&\chi(u,\vec{v},\vec{w},u',\vec{v}',\vec{w}',\delta_1(u,\vec{v},\vec{w}),\delta_2(u',\vec{v}',\vec{w}'))
\end{align*}
where $\omega$, $\chi$, $\xi$ and $\gamma_1,\gamma_2,\delta_1,\delta_2$ are arbitrary terms with appropriate arity. In comparison to an equation of form~(T), the second occurrences of the variables $\vec{w}$ and $\vec{w}'$ in the first line, respectively, are swapped in an equation of form~(S). Then all Equations~\eqref{eq:Wij}, $i\in\{0,\ldots,n\}$ and $j\in\{0,\ldots,\ell-1\}$, are of form~(W), all Equations~\eqref{eq:Ti'}, $i\in\{1,\ldots,n\}$, are of form~(T), and all Equations~\eqref{eq:Si'}, $i\in\{1,\ldots,n\}$, are of form~(S), and we transform Diagram~\eqref{diagr:expandedtrailVW} for illustrative reasons into the following diagram:
\begin{footnotesize}
\begin{equation*}
	\begin{tikzcd}
		{} & {} & \cdots & {} & {} & {} & \cdots & {} & {} & {} & \cdots & {} & {}
		\arrow["{\eqref{eq:B1'}}", no head, from=1-1, to=1-2]
		\arrow["{(W)}", no head, from=1-2, to=1-3]
		\arrow["{(W)}", no head, from=1-3, to=1-4]
		\arrow["{(T)}", no head, from=1-4, to=1-5]
		\arrow["{(S)}", no head, from=1-5, to=1-6]
		\arrow["{(W)}", no head, from=1-6, to=1-7]
		\arrow["{(W)}", no head, from=1-7, to=1-8]
		\arrow["{(T)}", no head, from=1-8, to=1-9]
		\arrow["{(S)}", no head, from=1-9, to=1-10]
		\arrow["{(W)}", no head, from=1-10, to=1-11]
		\arrow["{(W)}", no head, from=1-11, to=1-12]
		\arrow["{\eqref{eq:B2'}}", no head, from=1-12, to=1-13]
	\end{tikzcd}
\end{equation*}
\end{footnotesize}%
We note that here the first line~\eqref{eq:B1'} encodes the equality of $u$ and $\tau_{(0,0)}[[\mu]]$, the first of the lines~$(W)$ encodes the equality of $\tau_{(0,0)}[[\lambda]]$ and $\tau_{(0,1)}[[\mu]]$, and so on. We show that we can replace each equation of form~(W) by an equation of form~(T) followed by an equation of form~(S) in the following sense. Then we get the diagram
\begin{scriptsize}
\begin{equation*}
	\begin{tikzcd}
		{} & {} && {} & {} && {} & {} & {} && {} & {} && {} & {},\\
		&& {} &&& {} &&&& {} &&& {} &&
		\arrow["{\eqref{eq:B1'}}", no head, from=1-1, to=1-2]
		\arrow["{(W)}", no head, from=1-2, to=1-4]
		\arrow[dotted, no head, from=1-4, to=1-5]
		\arrow["{(W)}", no head, from=1-5, to=1-7]
		\arrow["{(T)}", no head, from=1-7, to=1-8]
		\arrow["{(S)}", no head, from=1-8, to=1-9]
		\arrow["{(W)}", no head, from=1-9, to=1-11]
		\arrow[dotted, no head, from=1-11, to=1-12]
		\arrow["{(W)}", no head, from=1-12, to=1-14]
		\arrow["{\eqref{eq:B2'}}", no head, from=1-14, to=1-15]
		\arrow["{(T)}"', dashed, no head, from=1-2, to=2-3]
		\arrow["{(S)}"', dashed, no head, from=2-3, to=1-4]
		\arrow["{(T)}"', dashed, no head, from=1-5, to=2-6]
		\arrow["{(S)}"', dashed, no head, from=2-6, to=1-7]
		\arrow["{(T)}"', dashed, no head, from=1-9, to=2-10]
		\arrow["{(S)}"', dashed, no head, from=2-10, to=1-11]
		\arrow["{(T)}"', dashed, no head, from=1-12, to=2-13]
		\arrow["{(S)}"', dashed, no head, from=2-13, to=1-14]
	\end{tikzcd}
\end{equation*}
\end{scriptsize}%
where, following the dashed lines, equations of form~(T) and of form~(S) alternate. More precisely, we show that, for all $i\in\{0,\ldots,n\}$ and $j\in\{0,\ldots,\ell-1\}$, Equation~\eqref{eq:Wij} implies the existence of a $(2(k+2m+1))$-ary term $\pi_{(i,j)}$ such that
{\allowdisplaybreaks
\begin{align*}
	\tau_{(i,j)}&(u,\vec{v},\vec{w},u',\vec{v}',\vec{w}',\lambda_1^{(i,j)}(u,\vec{v},\vec{w}),\lambda_2^{(i,j)}(u',\vec{v}',\vec{w}'))\\*
	=\pi_{(i,j)}&(u,\vec{v},\vec{w},\vec{w},u',\vec{v}',\vec{w}',\vec{w}'),\tag{T($\i,\j$)'}\label{eq:Tij'}\\[1ex]
		\pi_{(i,j)}&(u,\vec{v},\vec{w},\vec{w}',u',\vec{v}',\vec{w}',\vec{w})\\*
		=\tau_{(i,j+1)}&(u,\vec{v},\vec{w},u',\vec{v}',\vec{w}',{\mu}_1^{(i,j+1)}(u,\vec{v},\vec{w}),{\mu}_2^{(i,j+1)}(u',\vec{v}',\vec{w}')).\tag{S($\i,\j$)'}\label{eq:Sij'}
\end{align*}}%
Equation~\eqref{eq:Tij'} is of form~(T) and Equation~\eqref{eq:Sij'} is of form~(S). One sees immediately that the choice
\begin{align*}
	\pi_{(i,j)}(u,\vec{v},\vec{w},\vec{\tilde{w}},u',\vec{v}',\vec{w}',\vec{\tilde{w}}) := \tau_{(i,j)}(u,\vec{v},\vec{w},u',\vec{v}',\vec{w}',\lambda_1^{(i,j)}(u,\vec{v},\vec{w}),\lambda_2^{(i,j)}(u',\vec{v}',\vec{w}'))
\end{align*}
fulfills both equations. By setting $N := n(\ell+1)+\ell$,
{\allowdisplaybreaks
\begin{align*}
	\sigma_{i(\ell+1)+j+1}
	&:= \tau_{(i,j)} \text{ for all } i\in\{0,\ldots,n\},\, j\in\{0,\ldots,\ell\},\\
	s_{i(\ell+1)+j+1}
	&:= \pi_{(i,j)} \text{ for all } i\in\{0,\ldots,n\},\, j\in\{0,\ldots,\ell-1\},\\
	s_{i(\ell+1)}
	&:= t'_i \text{ for all }  i \in \{1,\ldots,n\},\\
	\eta_\alpha^{(i(\ell+1)+j+1)}
	&:=\mu_\alpha^{(i,j)} \text{ for all } i\in\{0,\ldots,n\},\, j\in\{0,\ldots,\ell\}, \alpha \in \{1,2\},\\
	\epsilon_\alpha^{(i(\ell+1)+j+1)}
	&:=\lambda_\alpha^{(i,j)} \text{ for all }i\in\{0,\ldots,n\},\, j\in\{0,\ldots,\ell\}, \alpha \in \{1,2\},
\end{align*}}%
we get an integer $N\geqslant 1$, $(2(k+m+2))$-ary terms $\sigma_1,\ldots,\sigma_{N+1}$, $(2(k+2m+1))$-ary terms $s_1,\ldots,s_N$ and $(k+m+1)$-ary terms $\eta^{(i)}_1,\eta^{(i)}_2,\epsilon^{(i)}_1,\epsilon^{(i)}_2$, for all $i\in\{1,\ldots,N+1\}$, satisfying, by construction, all the desired equations.

Now we show that the existence of the terms as in the statement implies that $\mathbb{V}$ is a weakly Mal'tsev category. Equation~\eqref{eq:(f_i,g_i)inP} means that $(f_i(x,y),g_i(x,y))\in P$ for all $i\in\{1,\ldots,k\}$. Hence Equation~\eqref{eq:WMu} implies that
\begin{align*}
	&q_1(y,x)\\
	=\sigma_1\biggl(&q_1\bigl(y,x\bigr),q_1\bigl(f_1,g_1\bigr),\ldots,q_1\bigl(f_k,g_k\bigr),q_1\bigl(p_1(x,x,y),p_1(x,y,y)\bigr),\ldots,q_1\bigl(p_m(x,x,y),p_m(x,y,y)\bigr),\\
	& q_2\bigl(y,x\bigr),q_2\bigl(f_1,g_1\bigr),\ldots,q_2\bigl(f_k,g_k\bigr),q_2\bigl(p_1(x,x,y),p_1(x,y,y)\bigr),\ldots,q_2\bigl(p_m(x,x,y),p_m(x,y,y)\bigr),\\
	\eta_1^{(1)}\Bigl(&q_1\bigl(y,x\bigr),q_1\bigl(f_1,g_1\bigr),\ldots,q_1\bigl(f_k,g_k\bigr),q_1\bigl(p_1(x,x,y),p_1(x,y,y)\bigr),\ldots,q_1\bigl(p_m(x,x,y),p_m(x,y,y)\bigr)\Bigr),\\
	\eta_2^{(1)}\Bigl(&q_2\bigl(y,x\bigr),q_2\bigl(f_1,g_1\bigr),\ldots,q_2\bigl(f_k,g_k\bigr),q_2\bigl(p_1(x,x,y),p_1(x,y,y)\bigr),\ldots,q_2\bigl(p_m(x,x,y),p_m(x,y,y)\bigr)\Bigr)\biggr),
\end{align*}
where we write $(f_i,g_i)$ as shorter notation for $(f_i(x,y),g_i(x,y))$. Furthermore, we have, for all $i\in\{1,\ldots,N+1\}$ and $\alpha\in\{1,2\}$, that
{\allowdisplaybreaks
\begin{align*}
	&\eta_\alpha^{(i)}\Bigl(q_\alpha\bigl(y,x\bigr),q_\alpha\bigl(f_1,g_1\bigr),\ldots,q_\alpha\bigl(f_k,g_k\bigr),q_\alpha\bigl(p_1(x,x,y),p_1(x,y,y)\bigr),\ldots,q_\alpha\bigl(p_m(x,x,y),p_m(x,y,y)\bigr)\Bigr)\\*
	=&q_\alpha\biggl(\eta_\alpha^{(i)}\Bigl(\bigl(y,x\bigr),\bigl(f_1,g_1\bigr),\ldots,\bigl(f_k,g_k\bigr),\bigl(p_1(x,x,y),p_1(x,y,y)\bigr),\ldots,\bigl(p_m(x,x,y),p_m(x,y,y)\bigr)\Bigr)\biggr)\\
	=&q_\alpha\biggl(\epsilon_\alpha^{(i)}\Bigl(\bigl(y,x\bigr),\bigl(f_1,g_1\bigr),\ldots,\bigl(f_k,g_k\bigr),\bigl(p_1(x,x,y),p_1(x,y,y)\bigr),\ldots,\bigl(p_m(x,x,y),p_m(x,y,y)\bigr)\Bigr)\biggr)\\*
	=&\epsilon_\alpha^{(i)}\Bigl(q_\alpha\bigl(y,x\bigr),q_\alpha\bigl(f_1,g_1\bigr),\ldots,q_\alpha\bigl(f_k,g_k\bigr),q_\alpha\bigl(p_1(x,x,y),p_1(x,y,y)\bigr),\ldots,q_\alpha\bigl(p_m(x,x,y),p_m(x,y,y)\bigr)\Bigr),
\end{align*}}%
where we used Equations~\eqref{subeq:WMeta} and the definition of the algebra structure on $P\subseteq \mathsf{F}(x,y)\times \mathsf{F}(x,y)$. Thus, we have that
{\allowdisplaybreaks
\begin{align*}
	\sigma_i\biggl(&q_1\bigl(y,x\bigr),q_1\bigl(f_1,g_1\bigr),\ldots,q_1\bigl(f_k,g_k\bigr),q_1\bigl(p_1(x,x,y),p_1(x,y,y)\bigr),\ldots,q_1\bigl(p_m(x,x,y),p_m(x,y,y)\bigr),\\*
	&q_2\bigl(y,x\bigr),q_2\bigl(f_1,g_1\bigr),\ldots,q_2\bigl(f_k,g_k\bigr),q_2\bigl(p_1(x,x,y),p_1(x,y,y)\bigr),\ldots,q_2\bigl(p_m(x,x,y),p_m(x,y,y)\bigl),\\*
	\eta_1^{(i)}\Bigl(&q_1\bigl(y,x\bigr),q_1\bigl(f_1,g_1\bigr),\ldots,q_1\bigl(f_k,g_k\bigr),q_1\bigl(p_1(x,x,y),p_1(x,y,y)\bigr),\ldots,q_1\bigl(p_m(x,x,y),p_m(x,y,y)\bigr)\Bigr),\\*
	\eta_2^{(i)}\Bigl(&q_2\bigl(y,x\bigr),q_2\bigl(f_1,g_1\bigr),\ldots,q_2\bigl(f_k,g_k\bigr),q_2\bigl(p_1(x,x,y),p_1(x,y,y)\bigr),\ldots,q_2\bigl(p_m(x,x,y),p_m(x,y,y)\bigl)\Bigl)\biggl)\\
	= \sigma_i\biggl(&q_1\bigl(y,x\bigr),q_1\bigl(f_1,g_1\bigr),\ldots,q_1\bigl(f_k,g_k\bigr),q_1\bigl(p_1(x,x,y),p_1(x,y,y)\bigr),\ldots,q_1\bigl(p_m(x,x,y),p_m(x,y,y)\bigr),\\*
	&q_2\bigl(y,x\bigr),q_2\bigl(f_1,g_1\bigr),\ldots,q_2\bigl(f_k,g_k\bigr),q_2\bigl(p_1(x,x,y),p_1(x,y,y)\bigr),\ldots,q_2\bigl(p_m(x,x,y),p_m(x,y,y)\bigr),\\*
	\epsilon_1^{(i)}\Bigl(&q_1\bigl(y,x\bigr),q_1\bigl(f_1,g_1\bigr),\ldots,q_1\bigl(f_k,g_k\bigr),q_1\bigl(p_1(x,x,y),p_1(x,y,y)\bigr),\ldots,q_1\bigl(p_m(x,x,y),p_m(x,y,y)\bigr)\Bigr),\\*
	\epsilon_2^{(i)}\Bigl(&q_2\bigl(y,x\bigr),q_2\bigl(f_1,g_1\bigr),\ldots,q_2\bigl(f_k,g_k\bigr),q_2\bigl(p_1(x,x,y),p_1(x,y,y)\bigr),\ldots,q_2\bigl(p_m(x,x,y),p_m(x,y,y)\bigr)\Bigr)\biggr)
\end{align*}}%
for all $i\in\{1,\ldots,N+1\}$. Next we have that
{\allowdisplaybreaks
\begin{align*}
	\sigma_i\biggl(&q_1\bigl(y,x\bigr),q_1\bigl(f_1,g_1\bigr),\ldots,q_1\bigl(f_k,g_k\bigr),q_1\bigl(p_1(x,x,y),p_1(x,y,y)\bigr),\ldots,q_1\bigl(p_m(x,x,y),p_m(x,y,y)\bigr),\\*
	&q_2\bigl(y,x\bigr),q_2\bigl(f_1,g_1\bigr),\ldots,q_2\bigl(f_k,g_k\bigr),q_2\bigl(p_1(x,x,y),p_1(x,y,y)\bigr),\ldots,q_2\bigl(p_m(x,x,y),p_m(x,y,y)\bigr),\\*
	\epsilon_1^{(i)}\Bigl(&q_1\bigl(y,x\bigr),q_1\bigl(f_1,g_1\bigr),\ldots,q_1\bigl(f_k,g_k\bigr),q_1\bigl(p_1(x,x,y),p_1(x,y,y)\bigr),\ldots,q_1\bigl(p_m(x,x,y),p_m(x,y,y)\bigr)\Bigr),\\*
	\epsilon_2^{(i)}\Bigl(&q_2\bigl(y,x\bigr),q_2\bigl(f_1,g_1\bigr),\ldots,q_2\bigl(f_k,g_k\bigr),q_2\bigl(p_1(x,x,y),p_1(x,y,y)\bigr),\ldots,q_2\bigl(p_m(x,x,y),p_m(x,y,y)\bigr)\Bigr)\biggr)\\
	=s_i\Bigl(
	&q_1\bigl(y,x\bigr), q_1\bigl(f_1,g_1\bigr),\ldots,q_1\bigl(f_k,g_k\bigr), q_1\bigl(p_1(x,x,y),p_1(x,y,y)\bigr),\ldots,q_1\bigl(p_m(x,x,y),p_m(x,y,y)\bigr),\\
	&q_1\bigl(p_1(x,x,y),p_1(x,y,y)\bigr),\ldots,q_1\bigl(p_m(x,x,y),p_m(x,y,y)\bigr),\\
	&q_2\bigl(y,x\bigr), q_2\bigl(f_1,g_1\bigr),\ldots,q_2\bigl(f_k,g_k\bigr), q_2\bigl(p_1(x,x,y),p_1(x,y,y)\bigr),\ldots,q_2\bigl(p_m(x,x,y),p_m(x,y,y)\bigr),\\
	&q_2\bigl(p_1(x,x,y),p_1(x,y,y)\bigr),\ldots,q_2\bigl(p_m(x,x,y),p_m(x,y,y)\bigr)
	\Bigr)\\
	=s_i\Bigl(
	&q_1\bigl(y,x\bigr), q_1\bigl(f_1,g_1\bigr),\ldots,q_1\bigl(f_k,g_k\bigr), q_1\bigl(p_1(x,x,y),p_1(x,y,y)\bigr),\ldots,q_1\bigl(p_m(x,x,y),p_m(x,y,y)\bigr),\\
	&q_2\bigl(p_1(x,x,y),p_1(x,y,y)\bigr),\ldots,q_2\bigl(p_m(x,x,y),p_m(x,y,y)\bigr),\\
	&q_2\bigl(y,x\bigr), q_2\bigl(f_1,g_1\bigr),\ldots,q_2\bigl(f_k,g_k\bigr), q_2\bigl(p_1(x,x,y),p_1(x,y,y)\bigr),\ldots,q_2\bigl(p_m(x,x,y),p_m(x,y,y)\bigr),\\
	&q_1\bigl(p_1(x,x,y),p_1(x,y,y)\bigr),\ldots,q_1\bigl(p_m(x,x,y),p_m(x,y,y)\bigr)
	\Bigr)\\
	=\sigma_{i+1}\biggl(&q_1\bigl(y,x\bigr),q_1\bigl(f_1,g_1\bigr),\ldots,q_1\bigl(f_k,g_k\bigr),q_1\bigl(p_1(x,x,y),p_1(x,y,y)\bigr),\ldots,q_1\bigl(p_m(x,x,y),p_m(x,y,y)\bigr),\\*
	&q_2\bigl(y,x\bigr),q_2\bigl(f_1,g_1\bigr),\ldots,q_2\bigl(f_k,g_k\bigr),q_2\bigl(p_1(x,x,y),p_1(x,y,y)\bigr),\ldots,q_2\bigl(p_m(x,x,y),p_m(x,y,y)\bigr),\\*
	\eta_1^{(i+1)}\Bigl(&q_1\bigl(y,x\bigr),q_1\bigl(f_1,g_1\bigr),\ldots,q_1\bigl(f_k,g_k\bigr),q_1\bigl(p_1(x,x,y),p_1(x,y,y)\bigr),\ldots,q_1\bigl(p_m(x,x,y),p_m(x,y,y)\bigr)\Bigr),\\*
	\eta_2^{(i+1)}\Bigl(&q_2\bigl(y,x\bigr),q_2\bigl(f_1,g_1\bigr),\ldots,q_2\bigl(f_k,g_k\bigr),q_2\bigl(p_1(x,x,y),p_1(x,y,y)\bigr),\ldots,q_2\bigl(p_m(x,x,y),p_m(x,y,y)\bigr)\Bigr)\Biggr)
\end{align*}}%
for all $i\in\{1,\ldots,N\}$, where we applied Equation~\eqref{eq:WModd} in the first step. In the second step we used that
\begin{equation}\label{eq:q1q2pi}
	q_1(p_i(x,x,y),p_i(x,y,y))=q_2(p_i(x,x,y),p_i(x,y,y))
\end{equation}
for all $i\in\{1,\ldots,m\}$, which is true by Equation~\eqref{eq:im([e_1,e_2])} and the definition of $(q_1,q_2)$ as the cokernel pair of $[e_1,e_2]$. In the third step we utilized Equation~\eqref{eq:WMeven}. Equation~\eqref{eq:WMu'} implies that
\begin{align*}
	\sigma_{N+1}\biggl(&q_1\bigl(y,x\bigr),q_1\bigl(f_1,g_1\bigr),\ldots,q_1\bigl(f_k,g_k\bigr),q_1\bigl(p_1(x,x,y),p_1(x,y,y)\bigr),\ldots,q_1\bigl(p_m(x,x,y),p_m(x,y,y)\bigr),\\
	&q_2\bigl(y,x\bigr),q_2\bigl(f_1,g_1\bigr),\ldots,q_2\bigl(f_k,g_k\bigr),q_2\bigl(p_1(x,x,y),p_1(x,y,y)\bigr),\ldots,q_2\bigl(p_m(x,x,y),p_m(x,y,y)\bigr),\\
	\epsilon_1^{(N+1)}\Bigl(&q_1\bigl(y,x\bigr),q_1\bigl(f_1,g_1\bigr),\ldots,q_1\bigl(f_k,g_k\bigr),q_1\bigl(p_1(x,x,y),p_1(x,y,y)\bigr),\ldots,q_1\bigl(p_m(x,x,y),p_m(x,y,y)\bigr)\Bigr),\\
	\epsilon_2^{(N+1)}\Bigl(&q_2\bigl(y,x\bigr),q_2\bigl(f_1,g_1\bigr),\ldots,q_2\bigl(f_k,g_k\bigr),q_2\bigl(p_1(x,x,y),p_1(x,y,y)\bigr),\ldots,q_2\bigl(p_m(x,x,y),p_m(x,y,y)\bigr)\Bigr)\biggr)\\
	=&q_2(y,x).
\end{align*} 
Combining the above identities, we get that $q_1(y,x)=q_2(y,x)$ which means that $\mathbb{V}$ is a weakly Mal'tsev category by Lemma~\ref{lem:WMif and only ifprojection}.
\end{proof}

\begin{example}
We consider the special case $N=0$ in Theorem~\ref{thm:syntaxWM}, i.e., we suppose that the variety $\mathbb{V}$ admits binary terms $f_1,g_1,\ldots,f_k,g_k$, ternary terms $p_1,\ldots,p_m$, where $k,m\geqslant 0$ are integers, a $(2(k+m+2))$-ary term $\sigma_1$ and $(k+m+1)$-ary terms $\eta^{(1)}_1,\eta^{(1)}_2,\epsilon^{(1)}_1,\epsilon^{(1)}_2$ that fulfill all the required conditions. Using Equations~\eqref{subeq:WMuu'}, Equation~\eqref{eq:WMetay} for $\eta^{(1)}_1$ and $\epsilon^{(1)}_1$, and Equation~\eqref{eq:WMetax} for $\eta^{(1)}_2$ and $\epsilon^{(1)}_2$, we get that the identity
{\allowdisplaybreaks
\begin{align*}
	y =&\sigma_1(y,f_1,\ldots,f_k,p_1(x,x,y),\ldots,p_m(x,x,y),x,g_1,\ldots,g_k,p_1(x,y,y),\ldots,p_m(x,y,y),\\*
	&\phantom{\sigma_1(}\eta^{(1)}_1(y,f_1,\ldots,f_k,p_1(x,x,y),\ldots,p_m(x,x,y)),\\*
	&\phantom{\sigma_1(}\eta^{(1)}_2(x,g_1,\ldots,g_k,p_1(x,y,y),\ldots,p_m(x,y,y)))\\
	=&\sigma_1(y,f_1,\ldots,f_k,p_1(x,x,y),\ldots,p_m(x,x,y),x,g_1,\ldots,g_k,p_1(x,y,y),\ldots,p_m(x,y,y),\\*
	&\phantom{\sigma_1(}\epsilon^{(1)}_1(y,f_1,\ldots,f_k,p_1(x,x,y),\ldots,p_m(x,x,y)),\\*
	&\phantom{\sigma_1(}\epsilon^{(1)}_2(x,g_1,\ldots,g_k,p_1(x,y,y),\ldots,p_m(x,y,y)))\\*
	=&x
\end{align*}}%
holds in~$\mathbb{V}$. In particular, $\mathbb{V}$ is a Mal'tsev variety with Mal'tsev term $p(x,y,z):= x$.
\end{example}

\begin{example}[Distributive lattices]\label{ex:distributivelattices}
In~\cite{martins-ferreira:2012}, it is shown that a variety $\mathbb{L}$ of lattices is a weakly Mal'tsev category if and only if it is a variety of distributive lattices. For such a variety $\mathbb{L}$, we want to find integers $k,m,N\geqslant 0$ and terms $f_1,g_1,\ldots,f_k,g_k$, $p_1,\ldots,p_m$, $ s_1,\ldots,s_N $, $\sigma_1,\ldots,\sigma_{N+1}$ and $\eta^{(i)}_1,\eta^{(i)}_2,\epsilon^{(i)}_1,\epsilon^{(i)}_2$ for all $i\in\{1,\ldots,N+1\}$ that fulfill the conditions of Theorem~\ref{thm:syntaxWM}. We observe that we can perform the following calculation in $\mathbb{L}$ to show that $q_1(y,x)=q_2(y,x)$:
{\allowdisplaybreaks
\begin{align*}
	&q_1(y,x)\\*
	=&q_1(y,x)\land (q_1(y,x) \lor q_1(x,y))\tag{absorption law}\\
	=&q_1(y,x)\land(q_1(y,y)\lor q_1(x,x))\tag{$q_1$ morphism and commutativity of $\lor$}\\
	=&q_1(y,x)\land(q_2(y,y)\lor q_2(x,x))\tag{$q_1(y,y)=q_2(y,y)$ and $q_1(x,x)=q_2(x,x)$}\\
	=&q_1(y,x)\land(q_2(y,x)\lor q_2(x,y))\tag{$q_2$ morphism and commutativity of $\lor$}\\
	=&q_1(y,x)\land(q_2(y,x)\lor q_1(x,y))\tag{$q_2(x,y)=q_1(x,y)$}\\
	=&(q_1(y,x)\land q_2(y,x))\lor (q_1(y,x)\land q_1(x,y))\tag{distributivity of $\land$ over $\lor$}\\
	=&(q_1(y,x)\land q_2(y,x))\lor (q_1(y,y)\land q_1(x,x))\tag{$q_1$ morphism and commutativity of $\land$}\\
	=&(q_2(y,x)\land q_1(y,x))\lor (q_1(y,y)\land q_1(x,x))\tag{commutativity of $\land$}\\
	=&(q_2(y,x)\land q_1(y,x))\lor (q_2(y,y)\land q_2(x,x))\tag{$q_1(y,y)=q_2(y,y)$ and $q_1(x,x)=q_2(x,x)$}\\
	=&(q_2(y,x)\land q_1(y,x))\lor (q_2(y,x)\land q_2(x,y))&\tag{$q_2$ morphism and commutativity of $\land$}\\
	=&q_2(y,x)\land (q_1(y,x)\lor q_2(x,y))\tag{distributivity of $\land$ over $\lor$}\\
	=&q_2(y,x)\land (q_1(y,x)\lor q_1(x,y))\tag{$q_2(x,y)=q_1(x,y)$}\\
	=&q_2(y,x)\land (q_1(y,y)\lor q_1(x,x))\tag{$q_1$ morphism and commutativity of $\lor$}\\
	=&q_2(y,x)\land (q_2(y,y)\lor q_2(x,x))\tag{$q_1(y,y)=q_2(y,y)$ and $q_1(x,x)=q_2(x,x)$}\\
	=&q_2(y,x)\land (q_2(y,x)\lor q_2(x,y))\tag{$q_2$ morphism and commutativity of $\lor$}\\*
	=&q_2(y,x)\tag{absorption law}.
\end{align*}}%
In the proof of Theorem~\ref{thm:syntaxWM}, we showed, given terms satisfying the conditions as in the statement, that $q_1(y,x)=q_2(y,x)$ in the following way:
{\allowdisplaybreaks
\begin{align*}
	&q_1(y,x)\\*
	=&\sigma_1(\eta)
	\tag{Equation~\eqref{eq:WMu}}\\
	=&\sigma_1(\epsilon)
	\tag{(M) and Equations~\eqref{subeq:WMeta} for $i=1$}\\	
	=&s_1(1,2)
	\tag{Equation~\eqref{eq:WModd} for $i=1$}\\
	=&s_1(2,1)
	\tag{Equation~\eqref{eq:q1q2pi}}\\
	=&\sigma_2(\eta)
	\tag{Equation~\eqref{eq:WMeven} for $i=1$}\\
	=&\sigma_2(\epsilon)
	\tag{(M) and Equations~\eqref{subeq:WMeta} for $i=2$}\\
	=&s_2(1,2)
	\tag{Equation~\eqref{eq:WModd} for $i=2$}\\
	=&s_2(2,1)
	\tag{Equation~\eqref{eq:q1q2pi}}\\	
	=&\sigma_3(\eta)
	\tag{Equation~\eqref{eq:WMeven} for $i=2$}\\*
	\vdots\\*
	=&\sigma_{N+1}(\epsilon)
	\tag{(M) and Equations~\eqref{subeq:WMeta} for $i=N+1$}\\*
	=&q_2(y,x),\tag{Equation~\eqref{eq:WMu'}}
\end{align*}}%
where we set
{\allowdisplaybreaks
\begin{align*}
	\sigma_i(\eta):= \sigma_i\Bigl(
	&q_1(y,x),q_1(f_1,g_1),\ldots,q_1(p_1(x,x,y),p_1(x,y,y)),\ldots,\\*
	&q_2(y,x),q_2(f_1,g_1),\ldots,q_2(p_1(x,x,y),p_1(x,y,y)),\ldots,\\*
	\eta^{(i)}_1\bigl(&q_1(y,x),q_1(f_1,g_1),\ldots,q_1(p_1(x,x,y),p_1(x,y,y)),\ldots\bigr),\\*
	\eta^{(i)}_2\bigl(&q_2(y,x),q_2(f_1,g_1),\ldots,q_2(p_1(x,x,y),p_1(x,y,y)),\ldots\bigr)\Bigr),\\
	\sigma_i(\epsilon):= \sigma_i\Bigl(
	&q_1(y,x),q_1(f_1,g_1),\ldots,q_1(p_1(x,x,y),p_1(x,y,y)),\ldots,\\*
	&q_2(y,x),q_2(f_1,g_1),\ldots,q_2(p_1(x,x,y),p_1(x,y,y)),\ldots,\\*
	\epsilon^{(i)}_1\bigl(&q_1(y,x),q_1(f_1,g_1),\ldots,q_1(p_1(x,x,y),p_1(x,y,y)),\ldots\bigr),\\*
	\epsilon^{(i)}_2\bigl(&q_2(y,x),q_2(f_1,g_1),\ldots,q_2(p_1(x,x,y),p_1(x,y,y)),\ldots\bigr)\Bigr),\\
	s_j(1,2):= s_i\Bigl(
		&q_1(y,x),q_1(f_1,g_1),\ldots,\\*
		&q_1(p_1(x,x,y),p_1(x,y,y)),\ldots,\\*
		&q_1(p_1(x,x,y),p_1(x,y,y)),\ldots,\\*
		&q_2(y,x),q_2(f_1,g_1),\ldots,\\*
		&q_2(p_1(x,x,y),p_1(x,y,y)),\ldots,\\*
		&q_2(p_1(x,x,y),p_1(x,y,y)),\ldots\Bigr),\\
	s_j(2,1):= s_i\Bigl(
		&q_1(y,x),q_1(f_1,g_1),\ldots,\\*
		&q_1(p_1(x,x,y),p_1(x,y,y)),\ldots,\\*
		&q_2(p_1(x,x,y),p_1(x,y,y)),\ldots,\\*
		&q_2(y,x),q_2(f_1,g_1),\ldots,\\*
		&q_2(p_1(x,x,y),p_1(x,y,y)),\ldots,\\*
		&q_1(p_1(x,x,y),p_1(x,y,y)),\ldots\Bigr)
\end{align*}}%
for all $i\in\{1,\ldots,N+1\}$ and all $j\in\{1,\ldots,N\}$ and (M) means that we used that $q_1$ and $q_2$ are morphisms. Hence our ansatz is that the first step, where we use the absorption law, should correspond to the step where we use Equation~\eqref{eq:WMu}. And the last step, where we use again the absorption law, should correspond to the step where we use Equation~\eqref{eq:WMu'}. A step in our concrete calculation for distributive lattices where we use an equation should either correspond to a step from $\sigma_i(\epsilon)$ to $s_i(1,2)$ or from $s_i(2,1)$ to $\sigma_{i+1}(\eta)$ for some~$i$. A step where we use that $q_1$ or $q_2$ is a morphism and an equation should correspond to a step from $\sigma_i(\eta)$ to $\sigma_i(\epsilon)$. A step where we use that $q_1(y,y)=q_2(y,y)$, $q_1(x,x)=q_2(x,x)$ or $q_1(x,y)=q_2(x,y)$ should correspond to a step from $s_i(1,2)$ to $s_i(2,1)$. For $N=5$ we found integers $k,m\geq 0$ and terms $f_1,g_1,\ldots,f_k,g_k$, $p_1,\ldots,p_m$, $s_1,\ldots,s_5$, $\sigma_1,\ldots,\sigma_6$ and $\eta^{(i)}_1,\eta^{(i)}_2,\epsilon^{(i)}_1,\epsilon^{(i)}_2$ for all $i\in\{1,\ldots,6\}$ such that
{\allowdisplaybreaks
\begin{align*}
	q_1(y,x)\land (q_1(y,x) \lor q_1(x,y))
	&=\sigma_1(\eta),\\*
	q_1(y,x)\land(q_1(y,y)\lor q_1(x,x))
	&=\sigma_1(\epsilon)=s_1(1,2),\\
	q_1(y,x)\land(q_2(y,y)\lor q_2(x,x))
	&=s_1(2,1)=\sigma_2(\eta),\\
	q_1(y,x)\land(q_2(y,x)\lor q_2(x,y))
&=\sigma_2(\epsilon)=s_2(1,2),\\
	q_1(y,x)\land(q_2(y,x)\lor q_1(x,y))
	&=s_2(2,1),\\
	(q_1(y,x)\land q_2(y,x))\lor (q_1(y,x)\land q_1(x,y))
	&=\sigma_3(\eta),\\
	(q_1(y,x)\land q_2(y,x))\lor (q_1(y,y)\land q_1(x,x))	&=\sigma_3(\epsilon),\\
	(q_2(y,x)\land q_1(y,x))\lor (q_1(y,y)\land q_1(x,x))
	&=s_3(1,2),\\
	(q_2(y,x)\land q_1(y,x))\lor (q_2(y,y)\land q_2(x,x))
	&=s_3(2,1)=\sigma_4(\eta),\\
	(q_2(y,x)\land q_1(y,x))\lor (q_2(y,x)\land q_2(x,y))
	&=\sigma_4(\epsilon),\\
	q_2(y,x)\land (q_1(y,x)\lor q_2(x,y))
	&=s_4(1,2),\\
	q_2(y,x)\land (q_1(y,x)\lor q_1(x,y))
	&=s_4(2,1)=\sigma_5(\eta),\\
	q_2(y,x)\land (q_1(y,y)\lor q_1(x,x))
	&=\sigma_5(\epsilon)=s_5(1,2),\\
	q_2(y,x)\land (q_2(y,y)\lor q_2(x,x))
	&=s_5(2,1)=\sigma_6(\eta),\\*
	q_2(y,x)\land (q_2(y,x)\lor q_2(x,y))
	&=\sigma_6(\epsilon),
\end{align*}}%
and the conditions of Theorem~\ref{thm:syntaxWM} are satisfied. One can check that $k:=0$, $m:=3$ and the following terms fulfill the requirements:
\begin{align*}
	p_1(x,y,z)&:= x, &p_2(x,y,z)&:= y, &p_3(x,y,z)&:= z,
\end{align*}
{\allowdisplaybreaks
\begin{align*}
	s_1(u,w_1,w_2,w_3,\tilde{w}_1,\tilde{w}_2,\tilde{w}_3,u',w'_1,w'_2,w'_3,\tilde{w}'_1,\tilde{w}'_2,\tilde{w}'_3)&:=u\land (\tilde{w}_3 \lor \tilde{w}_1),\\*
	s_2(u,w_1,w_2,w_3,\tilde{w}_1,\tilde{w}_2,\tilde{w}_3,u',w'_1,w'_2,w'_3,\tilde{w}'_1,\tilde{w}'_2,\tilde{w}'_3)&:=u\land (u' \lor \tilde{w}'_2),\\
	s_3(u,w_1,w_2,w_3,\tilde{w}_1,\tilde{w}_2,\tilde{w}_3,u',w'_1,w'_2,w'_3,\tilde{w}'_1,\tilde{w}'_2,\tilde{w}'_3)&:=(u' \land u) \lor (\tilde{w}_3 \land \tilde{w}_1),\\
	s_4(u,w_1,w_2,w_3,\tilde{w}_1,\tilde{w}_2,\tilde{w}_3,u',w'_1,w'_2,w'_3,\tilde{w}'_1,\tilde{w}'_2,\tilde{w}'_3)&:=u'\land (u \lor \tilde{w}'_2),\\*
	s_5(u,w_1,w_2,w_3,\tilde{w}_1,\tilde{w}_2,\tilde{w}_3,u',w'_1,w'_2,w'_3,\tilde{w}'_1,\tilde{w}'_2,\tilde{w}'_3)&:=u'\land (\tilde{w}_3 \lor \tilde{w}_1),
\end{align*}}%
{\allowdisplaybreaks
\begin{align*}
	\sigma_1(u,w_1,w_2,w_3,u',w'_1,w'_2,w'_3,a,b)&:= u \land a,\\*
	\sigma_2(u,w_1,w_2,w_3,u',w'_1,w'_2,w'_3,a,b)&:= u \land b,\\
	\sigma_3(u,w_1,w_2,w_3,u',w'_1,w'_2,w'_3,a,b)&:= (u \land u') \lor a,\\
	\sigma_4(u,w_1,w_2,w_3,u',w'_1,w'_2,w'_3,a,b)&:= (u' \land u) \lor b,\\
	\sigma_5(u,w_1,w_2,w_3,u',w'_1,w'_2,w'_3,a,b)&:= u' \land a,\\*
	\sigma_6(u,w_1,w_2,w_3,u',w'_1,w'_2,w'_3,a,b)&:= u' \land b,
\end{align*}}%
{\allowdisplaybreaks
\begin{align*}
	\eta^{(1)}_1(u,w_1,w_2,w_3)&:= u \lor w_2,
	&\epsilon^{(1)}_1(u,w_1,w_2,w_3)&:= w_3 \lor w_1,\\*
	\eta^{(1)}_2(u,w_1,w_2,w_3)&:= u,
	&\epsilon^{(1)}_2(u,w_1,w_2,w_3)&:= u,\\
	\eta^{(2)}_1(u,w_1,w_2,w_3)&:= u,
	&\epsilon^{(2)}_1(u,w_1,w_2,w_3)&:= u,\\
	\eta^{(2)}_2(u,w_1,w_2,w_3)&:= w_3 \lor w_1,
	&\epsilon^{(2)}_2(u,w_1,w_2,w_3)&:= u \lor w_2,\\
	\eta^{(3)}_1(u,w_1,w_2,w_3)&:= u \land w_2,
	&\epsilon^{(3)}_1(u,w_1,w_2,w_3)&:= w_3 \land w_1,\\
	\eta^{(3)}_2(u,w_1,w_2,w_3)&:= u,
	&\epsilon^{(3)}_2(u,w_1,w_2,w_3)&:= u,\\
	\eta^{(4)}_1(u,w_1,w_2,w_3)&:= u,
	&\epsilon^{(4)}_1(u,w_1,w_2,w_3)&:= u,\\
	\eta^{(4)}_2(u,w_1,w_2,w_3)&:= w_3 \land w_1,
	&\epsilon^{(4)}_2(u,w_1,w_2,w_3)&:= u \land w_2,\\
	\eta^{(5)}_1(u,w_1,w_2,w_3)&:= u \lor w_2,
	&\epsilon^{(5)}_1(u,w_1,w_2,w_3)&:= w_3 \lor w_1,\\
	\eta^{(5)}_2(u,w_1,w_2,w_3)&:= u,
	&\epsilon^{(5)}_2(u,w_1,w_2,w_3)&:= u,\\
	\eta^{(6)}_1(u,w_1,w_2,w_3)&:= u,
	&\epsilon^{(6)}_1(u,w_1,w_2,w_3)&:= u,\\*
	\eta^{(6)}_2(u,w_1,w_2,w_3)&:= w_3 \lor w_1,
	&\epsilon^{(6)}_2(u,w_1,w_2,w_3)&:= u \lor w_2.
\end{align*}}%
\end{example}

\section{Another Mal'tsev-like property}\label{sect:AnotherMal'tsev-likeproperty}

In this section, we discuss the observation that the condition which is obtained from the syntactic characterization of weakly Mal'tsev varieties from Theorem~\ref{thm:syntaxWM} by dropping Equation~\eqref{eq:(f_i,g_i)inP} characterizes the varieties in which any reflexive regular relation is an equivalence relation. A \emph{regular relation}, in the context of finitely complete categories, is a monomorphism $r\colon R\rightarrowtail X\times Y$ which is \emph{regular}, i.e., an equalizer of a parallel pair of morphisms $f,g\colon X\times Y\to Z$.

We call a class $\mathcal{M}$ of binary relations in a finitely complete category $\mathfrak{C}$ \emph{pullback-stable} if, for any relation $r\colon R\rightarrowtail X\times Y$ in $\mathcal{M}$ and any morphisms $f\colon A\to X$ and $g\colon B\to Y$, the relation $\rho\colon P\rightarrowtail A\times B$ given by any pullback
\begin{equation*}
	\begin{tikzcd}
		P & {A\times B} \\
		R & {X\times Y,}
		\arrow["{f\times g}", from=1-2, to=2-2]			\arrow["{r}"', tail, from=2-1, to=2-2]
		\arrow["{p_R}"', from=1-1, to=2-1]
		\arrow["{\rho}", tail, from=1-1, to=1-2]
		\arrow["\scalebox{2}{$\lrcorner$}"{anchor=center, pos=0.125}, draw=none, from=1-1, to=2-2]
	\end{tikzcd}
\end{equation*}
lies also in~$\mathcal{M}$. The classes $\mathcal{M}_\text{all}$ of all binary relations in~$\mathfrak{C}$, $\mathcal{M}_{\text{strong}}$ of strong binary relations and $\mathcal{M}_{\text{reg}}$ of regular binary relations are pullback-stable.

\begin{definition}[$\mathcal{M}$-Mal'tsev category]
Let $\mathfrak{C}$ be a finitely complete category and $\mathcal{M}$ be a pullback-stable class of binary relations in~$\mathfrak{C}$. We call $\mathfrak{C}$ an \emph{$\mathcal{M}$-Mal'tsev category} if any reflexive relation $r\colon R\rightarrowtail X\times X$ in $\mathcal{M}$ is an equivalence relation.
\end{definition}

Let $\mathfrak{C}$ be a finitely complete category. By Definition~\ref{def:Mal'tsevcategory}, it is $\mathcal{M}_{\text{all}}$-Mal'tsev if and only if it is a Mal'tsev category. Furthermore, by Proposition~\ref{prop:characterizationweaklyMal'tsevcategories}, it is $\mathcal{M}_{\text{strong}}$-Mal'tsev if and only if it is a weakly Mal'tsev category. In this section, we consider $\mathcal{M}_{\text{reg}}$-Mal'tsev categories. 

The following proposition recovers Proposition~\ref{prop:characterizationMal'tsevcategoriesrelations} and parts of Proposition~\ref{prop:characterizationweaklyMal'tsevcategories}. In~\cite{janelidze.martins-ferreira:2012}, the authors show similarly the following equivalences for $\mathcal{M}$ being a proper class of binary relations in a category with pullbacks and equalizers (see also~\cite{martins-ferreira:2020}, Theorem~1).

\begin{proposition}
Let $\mathfrak{C}$ be a finitely complete category and $\mathcal{M}$ be a pullback-stable class of binary relations in~$\mathfrak{C}$. Then the following conditions are equivalent:
\begin{enumerate}[label=\arabic*. , ref=\arabic*.]
	\item\label{prop:M-Maltsev category:def} The category $\mathfrak{C}$ is an $\mathcal{M}$-Mal'tsev category.
	\item\label{prop:M-Maltsev category:symmetric} Any reflexive relation in $\mathcal{M}$ is symmetric.
	\item\label{prop:M-Maltsev category:transitive} Any reflexive relation in $\mathcal{M}$ is transitive.
	\item\label{prop:M-Maltsev category:difunctional} Any relation in $\mathcal{M}$ is difunctional.
\end{enumerate}
\end{proposition}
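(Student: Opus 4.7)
My plan is to prove $(1) \Rightarrow (2)$ and $(1) \Rightarrow (3)$ (both trivial), $(4) \Rightarrow (1)$, and finally $(2) \Rightarrow (4)$ together with $(3) \Rightarrow (4)$. Combined, these implications yield all four equivalences.

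The implications $(1) \Rightarrow (2)$ and $(1) \Rightarrow (3)$ are immediate, since an equivalence relation is by definition symmetric and transitive.

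For $(4) \Rightarrow (1)$, I would adapt the classical argument from the proof of Proposition~\ref{prop:characterizationMal'tsevcategoriesrelations} via generalized elements. Given a reflexive difunctional relation $r \colon R \rightarrowtail X \times X$ in $\mathcal{M}$, symmetry follows because, from $x R y$ together with reflexivity ($x R x$ and $y R y$), the difunctional pattern applied to $y R y$, $x R y$, $x R x$ yields $y R x$. Transitivity then follows because, from $x R y$ and $y R z$, symmetry gives $z R y$ and reflexivity gives $z R z$, so difunctionality applied to $x R y$, $z R y$, $z R z$ yields $x R z$.

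The core of the argument is $(2) \Rightarrow (4)$ and $(3) \Rightarrow (4)$, where the pullback-stability of $\mathcal{M}$ enters. Given $r \colon R \rightarrowtail X \times Y$ in $\mathcal{M}$, I would construct a relation $\rho$ on the object $R$ as the pullback
\[
\begin{tikzcd}
S \ar[r] \ar[d, tail, "\rho"'] & R \ar[d, tail, "r"] \\
R \times R \ar[r, "\alpha"'] & X \times Y,
\end{tikzcd}
\]
where $\alpha$ sends a generalized element $((x, y), (x', y'))$ of $R \times R$ to $(x, y') \in X \times Y$. As a pullback of the monomorphism $r$, the morphism $\rho$ is a monomorphism, and pullback-stability places it in $\mathcal{M}$. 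Via generalized elements, $((x, y), (x', y')) \in \rho$ iff $(x, y') \in R$; in particular, the diagonal of $R$ factors through $\rho$, so $\rho$ is reflexive.

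Under $(2)$, the relation $\rho$ is symmetric, which in terms of generalized elements means that, for any $(x, y), (x', y') \in R$, one has $x R y' \Leftrightarrow x' R y$; this immediately yields difunctionality of $R$, since given $x R y$, $x' R y$, $x' R y'$, the pairs $(x, y)$ and $(x', y')$ lie in $R$ and $x' R y$ holds, whence $x R y'$. Under $(3)$, the relation $\rho$ is transitive, and the chain $(x, y) \rho (x', y) \rho (x', y')$ (witnessed respectively by $x R y$ and $x' R y'$) yields $(x, y) \rho (x', y')$, i.e., $x R y'$. The main obstacle I anticipate is identifying the right pullback construction $\alpha$ whose induced reflexive relation on $R$ encodes difunctionality of $R$; once $\rho$ is in hand, the remaining verifications are straightforward bookkeeping with generalized elements.
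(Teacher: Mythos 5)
Your proposal is correct and follows essentially the same route as the paper: the key pullback you construct along $\alpha$ is exactly the paper's pullback of $r$ along $r_1\times r_2$, and the reduction of difunctionality of $r$ to symmetry (resp.\ transitivity) of the induced reflexive relation $\rho$ on $R$ is precisely the classical equivalence the paper invokes, which you merely spell out via generalized elements.
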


\begin{proof}
The implications "\ref{prop:M-Maltsev category:def}~$\Rightarrow$~\ref{prop:M-Maltsev category:symmetric}" and "\ref{prop:M-Maltsev category:def}~$\Rightarrow$~\ref{prop:M-Maltsev category:transitive}" are trivial. The implication "\ref{prop:M-Maltsev category:difunctional}~$\Rightarrow$~\ref{prop:M-Maltsev category:def}" holds since, as it is well-known, any reflexive relation is an equivalence relation if and only if it is difunctional. For the implication "\ref{prop:M-Maltsev category:symmetric}~$\Rightarrow$~\ref{prop:M-Maltsev category:difunctional}", we consider a relation $r=(r_1,r_2)\colon R\rightarrowtail X\times Y$ in~$\mathcal{M}$. Since $\mathcal{M}$ is assumed to be pullback-stable, the relation $\rho\colon P\rightarrowtail R\times R$ in the pullback
\begin{equation*}
	\begin{tikzcd}
		P & {R\times R} \\
		R & {X\times Y,}
		\arrow["{r_1\times r_2}", from=1-2, to=2-2]
		\arrow["{r}"', tail, from=2-1, to=2-2]
		\arrow["{p_R}"', from=1-1, to=2-1]
		\arrow["{\rho}", tail, from=1-1, to=1-2]
		\arrow["\scalebox{2}{$\lrcorner$}"{anchor=center, pos=0.125}, draw=none, from=1-1, to=2-2]
	\end{tikzcd}
\end{equation*}
lies also in~$\mathcal{M}$. Furthermore, it is clear that it is reflexive. Hence, by assumption, $\rho$ is symmetric. It is a classical result that $\rho$ being symmetric is equivalent to $r$ being difunctional. For the implication "\ref{prop:M-Maltsev category:transitive}~$\Rightarrow$~\ref{prop:M-Maltsev category:difunctional}", we note that $r$ is difunctional if and only if $\rho$ is transitive.
\end{proof}

As in the preceding section, we will adapt a proof of the classical theorem of Mal'tsev to the case of $\mathcal{M}_{\text{reg}}$-Mal'tsev varieties. This time, we use the characterization of Mal'tsev categories as those finitely complete categories in which every reflexive relation is symmetric as recalled in Proposition~\ref{prop:characterizationMal'tsevcategoriesrelations}. Let
\begin{equation*}
	\begin{tikzcd}
		R && {\mathsf{F}(x,y)\times\mathsf{F}(x,y)}
		\arrow["r", tail, from=1-1, to=1-3]
	\end{tikzcd}
\end{equation*}
be the smallest relation on $\mathsf{F}(x,y)$ which contains $(x,x)$, $(x,y)$ and $(y,y)$. This means that $R$ consists precisely of the elements
\begin{equation*}
	p(({x},{x}),({x},{y}),({y},{y}))\in \mathsf{F}(x,y)\times\mathsf{F}(x,y)
\end{equation*}
where $p\in \mathsf{F}(x,y,z)$. Hence 
\begin{equation}\label{eq:relationR}
	R=\{(p(x,x,y),p(x,y,y))\mid p\in\mathsf{F}(x,y,z)\}\subseteq \mathsf{F}(x,y)\times\mathsf{F}(x,y).
\end{equation}

\begin{lemma}\label{lem:Mal'tsevif and only ifRcontains(y,x)}
The variety $\mathbb{V}$ is Mal'tsev if and only if the relation $R$ contains $(y,x)$.
\end{lemma}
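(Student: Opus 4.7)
The plan is to exploit the characterization of Mal'tsev categories as those finitely complete categories in which every reflexive relation is symmetric (Proposition~\ref{prop:characterizationMal'tsevcategoriesrelations}), together with Mal'tsev's classical theorem (Theorem~\ref{thm:Mal'tsev}). The key observation is that the explicit description~\eqref{eq:relationR} translates the membership question for $(y,x)$ directly into the existence of a Mal'tsev term.

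For the forward direction, I would first note that $R$ is reflexive in the categorical sense: given an arbitrary $a \in \mathsf{F}(x,y)$, write $a=t(x,y)$ for some binary term $t$ and take $p(x,y,z) := t(x,y)$ (independent of $z$); then $p(x,x,y) = a = p(x,y,y)$, so $(a,a) \in R$ by~\eqref{eq:relationR}. Moreover $(x,y) \in R$, as witnessed by $p(x,y,z) := y$, which gives $p(x,x,y) = x$ and $p(x,y,y) = y$. Assuming $\mathbb{V}$ is Mal'tsev, Proposition~\ref{prop:characterizationMal'tsevcategoriesrelations} forces $R$ to be symmetric, hence $(y,x) \in R$.

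For the converse, suppose $(y,x) \in R$. Using the description~\eqref{eq:relationR}, this membership means precisely that there exists a ternary term $p \in \mathsf{F}(x,y,z)$ such that $p(x,x,y) = y$ and $p(x,y,y) = x$. This is exactly a Mal'tsev term, so by Theorem~\ref{thm:Mal'tsev}, $\mathbb{V}$ is a Mal'tsev category.

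The argument is essentially immediate once one unpacks~\eqref{eq:relationR}; there is no real obstacle. The only point that requires a moment of care is verifying that $R$ is indeed reflexive (so that Proposition~\ref{prop:characterizationMal'tsevcategoriesrelations} applies), but this follows from the form of the generator $p$ above and the fact that every element of $\mathsf{F}(x,y)$ arises from a binary term.
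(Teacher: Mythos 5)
Your forward direction contains a genuine (though easily repaired) error in the verification that $R$ is reflexive. With your choice $p(x,y,z):=t(x,y)$ one gets $p(x,x,y)=t(x,x)$ and $p(x,y,y)=t(x,y)$, so the element of $R$ you produce is $(t(x,x),t(x,y))$, not $(a,a)$; the claimed identity $p(x,x,y)=a$ already fails for $t(x,y)=y$, where $p(x,x,y)=x\neq y$. The correct witness, which is the one the paper uses, is $p(x,y,z):=t(x,z)$, giving $p(x,x,y)=t(x,y)=p(x,y,y)$. Once this is fixed, your forward direction coincides with the paper's: $R$ is a reflexive relation, Proposition~\ref{prop:characterizationMal'tsevcategoriesrelations} makes it symmetric, and $(x,y)\in R$ then yields $(y,x)\in R$.

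Your converse takes a genuinely different route from the paper's, and in context it is the wrong one. You unpack $(y,x)\in R$ via~\eqref{eq:relationR} into the existence of a Mal'tsev term --- which is precisely the content of the \emph{next} lemma, Lemma~\ref{lem:Rcontains(y,x)if and only ifp} --- and then invoke the classical Theorem~\ref{thm:Mal'tsev}. This is not formally circular, since that theorem is stated with external references earlier in the paper, but it defeats the purpose of the lemma: Section~\ref{sect:AnotherMal'tsev-likeproperty} presents Lemmas~\ref{lem:Mal'tsevif and only ifRcontains(y,x)} and~\ref{lem:Rcontains(y,x)if and only ifp} as a self-contained re-proof of Mal'tsev's theorem through the ``every reflexive relation is symmetric'' characterization, precisely so that the first lemma's argument can be transplanted to regular relations in Lemma~\ref{lem:wMif and only ifE}, where no classical syntactic theorem is available to appeal to. The paper instead argues directly: given a reflexive relation $s\colon S\rightarrowtail A\times A$ and $(a,b)\in S$, the morphism $f\colon \mathsf{F}(x,y)\to A$ with $f(x)=a$ and $f(y)=b$ sends the generators $(x,x)$, $(x,y)$, $(y,y)$ of $R$ into $S$, so $f\times f$ restricts to a morphism $R\to S$ (as $R$ is the smallest subalgebra of $\mathsf{F}(x,y)\times\mathsf{F}(x,y)$ containing these generators), whence $(b,a)=(f\times f)(y,x)\in S$. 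You should supply this argument, or an equivalent direct one, rather than quoting Theorem~\ref{thm:Mal'tsev}.
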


\begin{proof}
"$\Rightarrow$":  Let $\mathbb{V}$ be a Mal'tsev variety. We see from Equation~\eqref{eq:relationR} that $R$ is reflexive since we can write
\begin{equation*}
	(t(x,y),t(x,y))=(p(x,x,y),p(x,y,y))
\end{equation*}
for any binary term $t(x,y)\in\mathsf{F}(x,y)$ if we set $p(x,y,z) := t(x,z)$.  Hence $R$ is symmetric and $(y,x)\in R$.

"$\Leftarrow$": Let $s\colon S\rightarrowtail A\times A$ be a reflexive relation on an algebra~$A$. We show that if $(a,b)\in S$ then also $(b,a)\in S$. For this let $f\colon \mathsf{F}(x,y)\to A$ be the unique morphism such that $f(x)=a$ and $f(y)=b$. It is easy to see that there exists a morphism $\varphi\colon R\to S$ such that the diagram
\begin{equation}\label{diagr:relationR}
	\begin{tikzcd}
		R && {\mathsf{F}(x,y)\times \mathsf{F}(x,y)} \\
		\\
		S && {A\times A,}
		\arrow["{r}", tail, from=1-1, to=1-3]
		\arrow["{f\times f}", from=1-3, to=3-3]
		\arrow["{\varphi}"', dotted, from=1-1, to=3-1]
		\arrow["{s}"', tail, from=3-1, to=3-3]
	\end{tikzcd}
\end{equation}
commutes since $R$ is the smallest relation which contains $(x,x)$, $(x,y)$ and $(y,y)$ and $(f\times f)(x,x)=(a,a)$, $(f\times f)(x,y)=(a,b)$ and $(f\times f)(y,y)=(b,b)$ lie in~$S$. The assumption $(y,x)\in R$ implies that $(b,a)=(f\times f)(y,x)$ is contained in~$S$.
\end{proof}

\begin{lemma}\label{lem:Rcontains(y,x)if and only ifp}
The relation $R$ contains $(y,x)$ if and only if there exists a ternary term $p\in\mathsf{F}(x,y,z)$ such that $p(x,x,y)= y$ and $p(x,y,y)= x$.
\end{lemma}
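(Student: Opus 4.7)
The plan is to observe that this lemma is essentially a direct unpacking of the explicit description of $R$ given in Equation~\eqref{eq:relationR}, so no nontrivial work is required: both directions reduce to reading off what membership in $R$ means.

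More precisely, for the forward direction, I would assume $(y,x) \in R$ and invoke~\eqref{eq:relationR}, which asserts that every element of $R$ is of the form $(p(x,x,y),p(x,y,y))$ for some ternary term $p \in \mathsf{F}(x,y,z)$. Thus there exists such a $p$ with $(p(x,x,y),p(x,y,y)) = (y,x)$, which, componentwise, gives the desired identities $p(x,x,y)=y$ and $p(x,y,y)=x$.

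For the converse, assume such a ternary term $p$ exists. Then the pair $(p(x,x,y),p(x,y,y))=(y,x)$ belongs to the set on the right-hand side of~\eqref{eq:relationR}, hence to~$R$.

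There is no real obstacle here; the content of the lemma is entirely carried by the explicit description~\eqref{eq:relationR} established just before the statement, and this lemma serves as the algebraic translation step that, when combined with Lemma~\ref{lem:Mal'tsevif and only ifRcontains(y,x)}, recovers Mal'tsev's classical theorem from the reflexive-symmetric characterization.
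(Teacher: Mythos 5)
Your proof is correct and matches the paper's, which simply states that the lemma is clear by Equation~\eqref{eq:relationR}; you have just spelled out the two directions of that observation explicitly. No further comment is needed.
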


\begin{proof}
This is clear by Equation~\eqref{eq:relationR}.
\end{proof}

To characterize the varieties which are $\mathcal{M}_{\text{reg}}$-Mal'tsev, we consider the smallest \emph{regular} relation on $\mathsf{F}(x,y)$ which contains $(x,x)$, $(x,y)$ and $(y,y)$. This relation can be obtained by first constructing the cokernel pair $(Q',q'_1,q'_2)$ of the relation~$R$. The equalizer $(E,e)$ of $q'_1,q'_2$ yields the desired regular relation and we get a unique morphism $j\colon R\to E$ such that $ej=r$.

\begin{equation*}
	\begin{tikzcd}
		& E \\
		R && {\mathsf{F}(x,y)\times\mathsf{F}(x,y)} && Q'
		\arrow["r", tail, from=2-1, to=2-3]
		\arrow["{q'_2}"', shift right=1, from=2-3, to=2-5]
		\arrow["{q'_1}", shift left=1, from=2-3, to=2-5]
		\arrow["e", tail, from=1-2, to=2-3]
		\arrow["j", tail, from=2-1, to=1-2]
	\end{tikzcd}
\end{equation*}

\begin{lemma}\label{lem:wMif and only ifE}
A variety $\mathbb{V}$ is $\mathcal{M}_{\text{reg}}$-Mal'tsev if and only if $E$ contains $(y,x)$.
\end{lemma}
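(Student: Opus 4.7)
The plan is to adapt the proof of Lemma~\ref{lem:Mal'tsevif and only ifRcontains(y,x)}, substituting ``smallest regular relation'' for ``smallest relation''. As a preliminary, I would first verify rigorously that $E$ is the smallest regular subobject of $\mathsf{F}(x,y)\times \mathsf{F}(x,y)$ through which $r$ factors. If $T\rightarrowtail \mathsf{F}(x,y)\times \mathsf{F}(x,y)$ is a regular relation realized as the equalizer of some parallel pair $h_1,h_2$, and $R\subseteq T$, then $h_1 r=h_2 r$, so by the universal property of the cokernel pair $(Q',q'_1,q'_2)$ there is a unique $\psi$ with $h_1=\psi q'_1$ and $h_2=\psi q'_2$; hence $h_1 e=\psi q'_1 e=\psi q'_2 e=h_2 e$, which means $e$ factors through $T$, i.e.\ $E\subseteq T$. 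In particular, $E$ is the smallest regular relation on $\mathsf{F}(x,y)$ containing $(x,x)$, $(x,y)$ and $(y,y)$.

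For the forward implication, I first note that $R$ is automatically reflexive: for any binary term $t\in \mathsf{F}(x,y)$, setting $p(x,y,z):=t(x,z)$ yields $(t(x,y),t(x,y))=(p(x,x,y),p(x,y,y))\in R$. Thus $E$ is reflexive too. Being a regular relation, $E$ is then symmetric whenever $\mathbb{V}$ is an $\mathcal{M}_{\text{reg}}$-Mal'tsev category, by the characterization of $\mathcal{M}$-Mal'tsev categories given above. Since $(x,y)\in R\subseteq E$, symmetry gives $(y,x)\in E$.

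For the converse, by the same characterization it suffices to prove that every reflexive regular relation is symmetric. Let $s\colon S\rightarrowtail A\times A$ be such a relation and assume $(a,b)\in S$. Let $f\colon \mathsf{F}(x,y)\to A$ be the unique morphism sending $x$ to $a$ and $y$ to $b$, and form the pullback $S_0\rightarrowtail \mathsf{F}(x,y)\times \mathsf{F}(x,y)$ of $s$ along $f\times f$. Pullback-stability of $\mathcal{M}_{\text{reg}}$ yields that $S_0$ is a regular relation; reflexivity of $S$ together with $(a,b)\in S$ place $(x,x),(x,y),(y,y)$ inside $S_0$. By the minimality established above, $E\subseteq S_0$, so the hypothesis $(y,x)\in E$ gives $(b,a)=(f\times f)(y,x)\in S$, as required.

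The only genuinely new ingredient compared to Lemma~\ref{lem:Mal'tsevif and only ifRcontains(y,x)} is the minimality argument for $E$ among regular subobjects containing $R$; the rest is a direct translation of the argument using pullback-stability of regular monomorphisms.
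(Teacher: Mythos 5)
Your proof is correct. The forward direction coincides with the paper's (the paper is terser but relies on the same facts: $E$ is a reflexive regular relation containing $(x,y)$, hence symmetric). In the converse direction you take a slightly different route. The paper works directly with the given relation $s\colon S\rightarrowtail A\times A$: it forms the cokernel pair $(Q'',q''_1,q''_2)$ of $s$, uses that a regular monomorphism is the equalizer of its own cokernel pair, induces $\psi\colon Q'\to Q''$ from the universal property of the cokernel pair of $r$, computes $q''_1(f\times f)e=q''_2(f\times f)e$, and thereby obtains a map $\omega\colon E\to S$ over $f\times f$. You instead factor the argument through two reusable pieces: (i) a minimality lemma stating that $E$ is the smallest regular relation containing $R$ (proved by exactly the same cokernel-pair computation, applied to an arbitrary equalizer presentation of $T$), and (ii) pullback-stability of $\mathcal{M}_{\text{reg}}$, which transports $S$ back to a regular relation $S_0$ on $\mathsf{F}(x,y)$ containing $(x,x)$, $(x,y)$, $(y,y)$. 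The two arguments have the same mathematical content; yours makes the universal role of $E$ explicit and leans on the pullback-stability assertion the paper states just before introducing $\mathcal{M}$-Mal'tsev categories, whereas the paper's avoids pullback-stability at the cost of an ad hoc diagram chase. Your version also mirrors more closely the proof of the $\mathcal{M}$-Mal'tsev characterization proposition, which is a small structural advantage.
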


\begin{proof}
"$\Rightarrow$": By definition of an $\mathcal{M}_{\text{reg}}$-Mal'tsev category, the reflexive regular relation $E$ is symmetric. Hence $(y,x)\in E$.

"$\Leftarrow$": Let $A$ be an algebra in $\mathbb{V}$ and $s\colon S\rightarrowtail A\times A$ be a reflexive regular relation on~$A$. We show that if $(a,b)\in S$ then also $(b,a)\in S$. We consider the extended version
\begin{equation*}
	\xymatrix{& E \ar@{ >->}[rdd]^-{e} \ar@{.>}@/^/[ddd]^-{\omega} \\
		\\
		R \ar@{ >->}[ruu]^-{j} \ar@{ >->}[rr]_(.3){r} \ar[dd]_-{\varphi} && \mathsf{F}(x,y)\times \mathsf{F}(x,y) \ar[dd]^-{f\times f} \ar@<2pt>[rr]^-{q'_1} \ar@<-2pt>[rr]_-{q'_2} && Q' \ar@{.>}[dd]^-{\psi} \\
		& S \ar@{ >->}[rd]^-{s} \\
		S \ar@{ >->}[rr]_-{s} \ar@{-}@<1pt>[ru] \ar@{-}@<-1pt>[ru] && A\times A \ar@<2pt>[rr]^-{q''_1} \ar@<-2pt>[rr]_-{q''_2} && {Q''}}
\end{equation*}
of Diagram~\eqref{diagr:relationR}, where $(Q'',q''_1,q''_2)$ is the cokernel pair of $s$ and, by assumption, $s$ is the equalizer of $q''_1,q''_2$. Furthermore, $\psi$ is the unique map such that the upper and lower right-hand side squares of the diagram commute. Then
\begin{equation*}
	q''_1(f\times f)e
	=\psi q'_1 e
	=\psi q'_2 e
	=q''_2 (f\times f) e,
\end{equation*} 
and there exists a unique map $\omega\colon E\to S$ such that $s\omega=(f\times f)e$ . Since $(y,x)\in E $ by assumption and $(f\times f)({y},{x})=(b,a)$, it follows that $(b,a)\in S$.
\end{proof}

\begin{theorem}\label{thm:syntaxwM}
A finitary one-sorted variety $\mathbb{V}$ of universal algebras is an $\mathcal{M}_{\text{reg}}$-Mal'tsev category if and only if there exist integers $k,m,N\geqslant 0$, binary terms $f_1,g_1,\ldots,f_k,g_k\in \mathsf{F}(x,y)$, ternary terms $p_1,\ldots,p_m\in\mathsf{F}(x,y,z)$, $(2(k+2m+1))$-ary terms $s_1,\ldots,s_N$, ${(2(k+m+2))}$-ary terms $\sigma_1,\ldots,\sigma_{N+1}$ and, for all $i\in\{1,\ldots,N+1\}$, $(k+m+1)$-ary terms $\eta^{(i)}_1,\eta^{(i)}_2,\epsilon^{(i)}_1,\epsilon^{(i)}_2$ such that the following identities (on variables $x$, $y$, $u$, $u'$, $v_1,\dots,v_k$, $v'_1,\dots,v'_k$, $w_1,\dots,w_m$ and $w'_1,\dots,w'_m$) hold, where we write $\vec{v}$ for $v_1,\ldots,v_k$ and $\vec{w}$ for $w_1,\ldots,w_m$, and analogously for $\vec{v}'$ and~$\vec{w}'$:
{\allowdisplaybreaks
	\begin{subequations}\label{subeq:wMeta}
		\begin{alignat}{2}
			&\eta^{(i)}_\alpha(y,f_1(x,y),\ldots,f_k(x,y),p_1(x,x,y),\ldots,p_m(x,x,y))\nonumber \\*
			=&\epsilon^{(i)}_\alpha(y,f_1(x,y),\ldots,f_k(x,y),p_1(x,x,y),\ldots,p_m(x,x,y)),\label{eq:wMetay}\\[3ex]
			&\eta^{(i)}_\alpha(x,g_1(x,y),\ldots,g_k(x,y),p_1(x,y,y),\ldots,p_m(x,y,y))\nonumber \\*
			=&\epsilon^{(i)}_\alpha(x,g_1(x,y),\ldots,g_k(x,y),p_1(x,y,y),\ldots,p_m(x,y,y)),\label{eq:wMetax}
		\end{alignat}
\end{subequations}}%
for all $i\in\{1,\ldots,N+1\}$ and $\alpha\in\{1,2\}$;
{\allowdisplaybreaks
	\begin{subequations}
		\begin{alignat}{2}
			&\sigma_{i}(u,\vec{v},\vec{w},u',\vec{v}',\vec{w}',\epsilon^{(i)}_1(u,\vec{v},\vec{w}),\epsilon^{(i)}_2(u',\vec{v}',\vec{w}'))\nonumber\\*
			=&s_{i}(u,\vec{v},\vec{w},\vec{w},u',\vec{v}',\vec{w}',\vec{w}'),\label{eq:wModd}\\[3ex]
			&s_{i}(u,\vec{v},\vec{w},\vec{w}',u',\vec{v}',\vec{w}',\vec{w})\nonumber \\*
			=&\sigma_{i+1}(u,\vec{v},\vec{w},u',\vec{v}',\vec{w}',{\eta}^{(i+1)}_1(u,\vec{v},\vec{w}),{\eta}^{(i+1)}_2(u',\vec{v}',\vec{w}')),\label{eq:wMeven}
		\end{alignat}
\end{subequations}}%
for all $i\in\{1,\ldots,N\}$ and
\begin{subequations}\label{subeq:wMuu'}
	\begin{alignat}{2}
		u
		&=\sigma_1(u,\vec{v},\vec{w},u',\vec{v}',\vec{w}',\eta^{(1)}_1(u,\vec{v},\vec{w}),\eta^{(1)}_2(u',\vec{v}',\vec{w}')),\label{eq:wMu}\\
		u'
		&=\sigma_{N+1}(u,\vec{v},\vec{w},u',\vec{v}',\vec{w}',\epsilon^{(N+1)}_1(u,\vec{v},\vec{w}),\epsilon^{(N+1)}_2(u',\vec{v}',\vec{w}'))\label{eq:wMu'}.
	\end{alignat}
\end{subequations}
\end{theorem}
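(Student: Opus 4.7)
The plan is to follow the template of the proof of Theorem~\ref{thm:syntaxWM}, adapting each step from the subalgebra $P$ to the ambient algebra $\mathsf{F}(x,y)\times\mathsf{F}(x,y)$. First, I would reduce the $\mathcal{M}_{\text{reg}}$-Mal'tsev condition to a single equation in a concrete free algebra: by Lemma~\ref{lem:wMif and only ifE}, $\mathbb{V}$ is $\mathcal{M}_{\text{reg}}$-Mal'tsev if and only if $(y,x)\in E$, and since $E$ is the equalizer of the cokernel pair $(q'_1,q'_2)$ of $r\colon R\rightarrowtail \mathsf{F}(x,y)\times\mathsf{F}(x,y)$, this is equivalent to the equality $q'_1(y,x)=q'_2(y,x)$ in~$Q'$.

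Next, I would mirror the combinatorial unpacking from the proof of Theorem~\ref{thm:syntaxWM}. Construct $Q'$ as the coequalizer of $\iota_1 r, \iota_2 r\colon R\to \mathsf{F}(x,y)\times\mathsf{F}(x,y)+\mathsf{F}(x,y)\times\mathsf{F}(x,y)$; by Equation~\eqref{eq:relationR}, this presents $Q'$ as the quotient of the coproduct by the congruence generated by the pairs
\[
\bigl(\iota_1(p(x,x,y),p(x,y,y)),\,\iota_2(p(x,x,y),p(x,y,y))\bigr),\quad p\in\mathsf{F}(x,y,z).
\]
This is precisely the same generating set $S$ that appeared in the proof of Theorem~\ref{thm:syntaxWM}, only living now in $\mathsf{F}(x,y)\times\mathsf{F}(x,y)+\mathsf{F}(x,y)\times\mathsf{F}(x,y)$ rather than $P+P$. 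The successive closure constructions $S_{\mathrm{S}}$, $S_{\mathrm{SR}}$, $S_{\mathrm{SRO}}$, $S_{\mathrm{SROT}}$, the use of Proposition~\ref{prop:coproduct} to decompose each $\sim$-step into the families of terms $\mu_\alpha^{(i,j)},\lambda_\alpha^{(i,j)}$ inside an enveloping $\tau_{(i,j)}$, the normalization of arities, and the splitting of each step of form $(W)$ into steps of forms $(T)$ and $(S)$, all carry over verbatim, producing precisely Equations~\eqref{subeq:wMeta}, \eqref{eq:wModd}, \eqref{eq:wMeven} and~\eqref{subeq:wMuu'}.

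The only systematic difference concerns the provenance of the auxiliary binary terms $f_i,g_i$: in the weakly Mal'tsev case they arose as the coordinates of elements of $P$, forcing $f_i(x,x)=g_i(x,x)$, whereas here they are the coordinates of arbitrary elements of $\mathsf{F}(x,y)\times\mathsf{F}(x,y)$ and are thus unconstrained. This accounts for the absence of Equation~\eqref{eq:(f_i,g_i)inP} in the present statement. For the converse direction, I would replay the telescoping calculation from the end of the proof of Theorem~\ref{thm:syntaxWM} with each $q_\alpha$ replaced by $q'_\alpha$ applied directly to pairs in $\mathsf{F}(x,y)\times\mathsf{F}(x,y)$; the crucial collapse step $q'_1(p_j(x,x,y),p_j(x,y,y))=q'_2(p_j(x,x,y),p_j(x,y,y))$ holds automatically because $(p_j(x,x,y),p_j(x,y,y))\in R$ and $q'_1 r=q'_2 r$. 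The main obstacle is not conceptual but clerical: one must check carefully that no step in the (lengthy) proof of Theorem~\ref{thm:syntaxWM} secretly used $(f_i,g_i)\in P$ for anything beyond allowing $q_1,q_2$ to be evaluated on these pairs, a role now taken over trivially by $q'_1,q'_2$.
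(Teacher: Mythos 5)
Your proposal is correct and follows essentially the same route as the paper: reduce via Lemma~\ref{lem:wMif and only ifE} to $(y,x)\in E$, i.e.\ to $(\iota'_1(y,x),\iota'_2(y,x))$ lying in the congruence $C'$ on $\mathsf{F}(x,y)^2+\mathsf{F}(x,y)^2$ generated by the pairs coming from $R$, and then repeat the combinatorial unpacking of the proof of Theorem~\ref{thm:syntaxWM} with $P$ replaced by $\mathsf{F}(x,y)\times\mathsf{F}(x,y)$. Your diagnosis of why Equation~\eqref{eq:(f_i,g_i)inP} disappears, and why the collapse step $q'_1(p_j(x,x,y),p_j(x,y,y))=q'_2(p_j(x,x,y),p_j(x,y,y))$ still holds, is exactly the remark the paper itself makes.
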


\begin{proof}
By Lemma~\ref{lem:wMif and only ifE}, we know that $\mathbb{V}$ is $\mathcal{M}_{\text{reg}}$-Mal'tsev if and only if $(y,x)\in E$. To construct~$E$, we first build the cokernel pair $(Q',q'_1,q'_2)$ by means of the coequalizer $q'$ of the maps $\iota'_1r,\iota'_2r$, where $\iota'_1,\iota'_2$ are the coproduct inclusions into $\mathsf{F}(x,y)^2+\mathsf{F}(x,y)^2$, as in the following diagram:
\begin{equation*}
	\xymatrix{
		R \ar@{ >->}[rr]^-{r} \ar@{ >->}[dd]_-{r} && \mathsf{F}(x,y)\times \mathsf{F}(x,y) \ar[dd]^-{\iota'_2} \ar@/^1.4pc/[rddd]^-{q'_2} & \\
		&&& \\
		\mathsf{F}(x,y)\times \mathsf{F}(x,y) \ar[rr]_-{\iota'_1} \ar@<-2pt>@/_1.4pc/[rrrd]_-{q'_1} && (\mathsf{F}(x,y)\times \mathsf{F}(x,y))+(\mathsf{F}(x,y)\times \mathsf{F}(x,y)) \ar[rd]_-{q'} & \\
		&&& Q'}
\end{equation*}
The algebra $Q'$ is given by the quotient of $\mathsf{F}(x,y)^2+\mathsf{F}(x,y)^2$, with respect to, due to Equation~\eqref{eq:relationR}, the smallest congruence $C'\subseteq(\mathsf{F}(x,y)^2+\mathsf{F}(x,y)^2)^2$ containing all pairs of the form
\begin{equation*}
	\Bigl(\iota'_1\bigl({p(x,x,y)},{p(x,y,y)}\bigr),\iota'_2\bigl(p(x,x,y),p(x,y,y)\bigr)\Bigr),
\end{equation*}
where $p\in\mathsf{F}(x,y,z)$. The algebra $E$ is then given by all pairs $(s,t)\in \mathsf{F}(x,y)\times \mathsf{F}(x,y)$ such that $q'(\iota'_1(s,t))=q'(\iota'_2(s,t))$. Thus, the condition $(y,x)\in E$ is equivalent to $(\iota'_1(y,x),\iota'_2(y,x))\in C'$. Proceeding from here analogously as in the proof of Theorem~\ref{thm:syntaxWM} yields the claim. We note that, in comparison to the proof of Theorem~\ref{thm:syntaxWM}, here $P=\{(s(x,y),t(x,y))\in\mathsf{F}(x,y)\times\mathsf{F}(x,y)\mid s(x,x)=t(x,x)\}$ is replaced by $\mathsf{F}(x,y)\times\mathsf{F}(x,y)$, which explains why the condition $f_i(x,x)=g_i(x,x)$ for all $i\in\{1,\ldots,k\}$ does not appear here.
\end{proof}

\end{document}